\theoremstyle{definition}
\newtheorem{definition}{Definition}
\newtheorem{theorem}[definition]{Theorem}
\newtheorem{proposition}[definition]{Proposition}
\newtheorem{lemma}[definition]{Lemma}
\newtheorem{corollary}[definition]{Corollary}
\theoremstyle{remark}
\newtheorem{remark}[definition]{Remark}
\newtheorem{example}[definition]{Example}
\newcounter{enumctr}
\newcommand{\N}{\mathbb{N}}
\newcommand{\oR}{\overline{\mathbb{R}}}
\newcommand{\R}{\mathbb{R}}
\newcommand{\cM}{\mathcal{M}}
\newcommand{\cS}{\mathcal{S}}
\newcommand{\eps}{\varepsilon}
\renewcommand{\phi}{\varphi}
\newcommand{\rT}{\mathrm {T}}
\newcommand{\rmd}{\mathrm{d}}
\newcommand{\fa} {\quad \text{for all }\,}
\newcommand{\faa} {\quad \text{for almost all }\,}
\newcommand{\mand}{\text{ and }}
\newcommand{\qandq}{\quad\text{and}\quad}
\newcommand{\set}[1]{\{#1\}}
\newcommand{\setb}[1]{\big\{#1\big\}}
\newcommand{\setB}[1]{\Big\{#1\Big\}}
\newcommand{\norm}[1]{\|#1\|}
\begin{document}
\title[The Bohl spectrum for nonautonomous differential equations]{The Bohl spectrum for linear nonautonomous differential equations}
\author[Thai~Son~Doan, Kenneth J.~Palmer, and Martin Rasmussen]{Thai~Son~Doan, Kenneth J.~Palmer, and Martin Rasmussen}

\dedicatory{Dedicated to the memory of George R.~Sell}

\address{Thai Son Doan: Department of Probability and Statistics, Institute of Mathematics, Vietnam Academy of Science and Technology, Hanoi, Vietnam, dtson@math.ac.vn}

\address{Kenneth J.~Palmer: Department of Mathematics, National Taiwan University, No.~1, Sec.~4, Roosevelt Road, Taipei 106, Taiwan, palmer@math.ntu.edu.tw}

\address{Martin Rasmussen: Department of Mathematics, Imperial College London, 180 Queen's Gate, London SW7 2AZ, United Kingdom, m.rasmussen@imperial.ac.uk}

\thanks{The first author was supported by a Marie-Curie IEF Fellowship, and the third author was supported by an EPSRC Career Acceleration Fellowship EP/I004165/1 (2010--2015)}

\date{\today}

\begin{abstract}
  We develop the Bohl spectrum for nonautonomous linear differential equation on a half line, which is a spectral concept that lies between the Lyapunov and the Sacker--Sell spectrum. We prove that the Bohl spectrum is given by the union of finitely many intervals, and we show by means of an explicit example that the Bohl spectrum does not coincide with the Sacker--Sell spectrum in general even for bounded systems. We demonstrate for this example that any higher-order nonlinear perturbation is exponentially stable (which is not evident from the Sacker--Sell spectrum), but we show that in general this is not true. We also analyze in detail situations in which the Bohl spectrum is identical to the Sacker--Sell spectrum.
\end{abstract}

\keywords{Bohl exponent, Bohl spectrum, Lyapunov exponent, Nonautonomous linear differential equation, Sacker--Sell spectrum}

\subjclass[2010]{34A30, 34D05, 37H15}

\maketitle

\section{Introduction}

The stability theory for linear nonautonomous differential equations has its origin in A.M.~Lyapunov's celebrated PhD Thesis~\cite{Lyapunov_92_1}, where he introduces characteristic numbers, so-called \emph{Lyapunov exponents}, which are given by accumulation points of exponential growth rates of individual solutions. It is well-known that in case of negative Lyapunov exponents, the stability of nonlinearly perturbed systems is not guaranteed without an additional regularity condition.

In the 1970s, R.S.~Sacker and G.R.~Sell developed the Sacker--Sell spectrum theory for nonautonomous differential equations. In contrast to the Lyapunov spectrum, the Sacker--Sell spectrum is not a solution-based spectral theory, but rather is based on the concept of an exponential dichotomy, which concerns uniform growth behavior in subspaces and extends the idea of hyperbolicity to explicitly time-dependent systems. If the Sacker--Sell spectrum lies left of zero, then the uniform exponential stability of nonlinearly perturbed systems is guaranteed.

It was shown in \cite{Malkin_51_1} that the regularity condition on Lyapunov exponents can be more robustly replaced by a nonuniform exponential dichotomy. Here the nonuniformity refers to time, and in contrast to that, so-called Bohl exponents, introduced by P.~Bohl in 1914 \cite{Bohl_14_1}, measure exponential growth along solutions uniformly in time. Bohl exponents have been studied extensively in the literature \cite{Daleckii_74_1}, and current research focuses on applications to differential-algebraic equations and control theory \cite{Berger_12_1,Linh_09_1,Anderson_13_1,Wirth_98_1,Hinrichsen_89_1}, and parabolic partial differential equations \cite{Mierczynski_13_1}.
In this paper, we develop the Bohl spectrum as union of all possible Bohl exponents of a nonautonomous linear differential equation on a half line. We show that the Bohl spectrum lies between the Lyapunov and the Sacker--Sell spectrum and that the Bohl spectrum is given by the union of finitely many (not necessarily closed) intervals. Each Bohl spectral interval is associated with a linear subspace, leading to a filtration of subspaces which is finer than the filtration obtained by the Sacker--Sell spectrum.


We show by means of an explicit example that the Bohl spectrum can be a proper subset of the Sacker--Sell spectrum even if the system is bounded.
We analyze in detail situations in which the Bohl spectrum is identical to the Sacker--Sell spectrum, and in particular,
we obtain this for bounded diagonalizable systems, integrally separated systems, and systems with Sacker--Sell point spectrum.
The fact that the Bohl and Sacker--Sell spectra coincide for diagonalizable systems shows that the Bohl spectrum
mainly gives information about the asymptotic behaviour of individual solutions whereas the Sacker--Sell also
embodies information about the relation between different solutions, in particular, whether or not
the angle between solutions is bounded below by a positive number. An interesting problem in this context
is to give necessary and sufficient conditions that the Bohl and Sacker--Sell spectra coincide.

The example referred to above shows that the Sacker--Sell spectrum can extend past zero even when the Bohl spectrum
is given by a negative number. We demonstrate for this example that any higher-order nonlinear perturbation is exponentially stable,
although this not evident from the Sacker--Sell spectrum. In the last section of this paper, we discuss an example with negative Bohl spectrum such that for a certain nonlinear perturbation, the perturbed system is unstable. This means that it is not possible to prove in general that if the Bohl spectrum lies to the left of zero, then any higher-order nonlinear perturbation is exponentially stable. In a forthcoming paper, we will provide additional conditions on the nonlinearities which give a positive answer to this question, even in situations where the Sacker--Sell spectrum intersects the positive half axis.

This paper is organized as follows. In Section~\ref{sec2}, we provide basic material on the Lyapunov and Sacker--Sell spectrum, and in Section~\ref{sec3}, we introduce the Bohl spectrum. Section~\ref{sec4} is devoted to prove the Spectral Theorem, which says that the Bohl spectrum is given by the union of finitely many intervals. We compare the Bohl spectrum and the Sacker--Sell spectrum in Section~\ref{sec5}, and we discuss nonlinear perturbations to linear systems with negative Bohl spectrum in Section~\ref{sec6}.

\section{Lyapunov and Sacker--Sell spectrum}\label{sec2}

In this section, we review the definition and basic properties of the two main spectral concepts for nonautonomous differential equations: the Lyapunov spectrum and the Sacker--Sell spectrum.

We consider a linear nonautonomous differential equation of the form
\begin{equation}\label{Eq1}
  \dot x=A(t)x\,,
\end{equation}
where $A:\R_{0}^{+}\rightarrow \R^{d\times d}$ is a locally integrable matrix-valued function, i.e.~for any $0\le a < b$,  we have $\int_a^b \|A(t)\|\,\rmd t<\infty$. Let $X:\R_0^+\rightarrow \R^{d\times d}$ be the \emph{fundamental matrix} of \eqref{Eq1}, i.e.~$X(\cdot)\xi$ solves \eqref{Eq1} with the initial value condition $x(0)=\xi$, where $\xi\in\R^d$.

The Lyapunov spectrum describes asymptotic growth of individual solutions of \eqref{Eq1}.

\begin{definition}[Lyapunov spectrum]\label{LyapunovSpectrum}
  The \emph{lower} and \emph{upper characteristic Lyapunov exponents} of a particular non-zero solution $X(\cdot)\xi$ of \eqref{Eq1} are defined by
  \begin{displaymath}
    \chi_-(\xi):=\liminf_{t\to\infty}\frac{1}{t}\ln \|X(t)\xi\|
  \end{displaymath}
  and
  \begin{displaymath}
    \chi_+(\xi):=\limsup_{t\to\infty}\frac{1}{t}\ln \|X(t)\xi\|\,.
  \end{displaymath}
  The \emph{Lyapunov spectrum of \eqref{Eq1}} is then defined as
  \begin{displaymath}
    \Sigma_{\rm Lya}:=\bigcup_{\xi\in\R^d\setminus\{0\}} \set{\chi_+(\xi)}\,.
  \end{displaymath}
\end{definition}

It is well-known \cite{Barreira_02_1,Adrianova_95_1} that there exist $n\in \set{1,\dots,d}$ and $\xi_1,\dots,\xi_n\in \R^d\setminus\{0\}$ such that
\begin{displaymath}
  \Sigma_{\rm Lya}
    =
  \bigcup_{i=1}^n \,\set{\chi_+(\xi_i)} \,.
\end{displaymath}

In contrast to the Lyapunov spectrum, the Sacker--Sell spectrum is based on a hyperbolicity concept for nonautonomous differential equations, given by an exponential dichotomy.

\begin{definition}[Exponential dichotomy]
  The linear differential equation \eqref{Eq1} admits an \emph{exponential dichotomy} with growth rate $\gamma\in\R$ if there exist a projector $P\in\R^{d\times d}$, and constants $K\ge 1$ and $\alpha>0$, such that
  \begin{alignat*}{2}
    \norm{X(t)P X^{-1}(s)} &\le K e^{(\gamma-\alpha)(t-s)} &\fa 0\le s \le t\,,\\
    \norm{X(t)(\mathds{1}-P) X^{-1}(s)} &\le K e^{(\gamma+\alpha)(t-s)} &\fa 0\le t \le s\,,
  \end{alignat*}
  where $\mathds{1}$ denotes the unit matrix. In addition, we say that \eqref{Eq1} admits an \emph{exponential dichotomy} with growth rate $\infty$ if there exists a $\gamma\in \R$ such that \eqref{Eq1} admits an \emph{exponential dichotomy} with growth rate $\gamma$ and projector $P=\mathds{1}$, and \eqref{Eq1} is said to admit an \emph{exponential dichotomy} with growth rate $-\infty$ if there exists a $\gamma\in \R$ such that \eqref{Eq1} admits an \emph{exponential dichotomy} with growth rate $\gamma$ and projector $P=0$, the zero matrix.
\end{definition}

The range of the projector $P$ of an exponential dichotomy is called the \emph{pseudo-stable space}, and the null space of the projector $P$ is called a \emph{pseudo-unstable space}. Note that in contrast to the pseudo-unstable space, the pseudo-stable space is uniquely determined for exponential dichotomies on $\R^+_0$ \cite{Rasmussen_09_1}.

The Sacker--Sell spectrum is then given by set of all growth rates $\gamma$ such that the linear system does not admit an exponential dichotomy with growth rate $\gamma$.

\begin{definition}[Sacker--Sell spectrum]
  The \emph{Sacker--Sell spectrum} of the linear differential equation \eqref{Eq1} is defined by
  \begin{align*}
    \Sigma_{\rm SS} := \set{\gamma\in\oR: & \hbox{ \eqref{Eq1} does not admit an exponential dichotomy}\\
    &\hbox{ with growth rate } \gamma}\,,
  \end{align*}
  where $\oR := \R \cup \set{-\infty,\infty}$.
\end{definition}
The Sacker--Sell spectrum was introduced by R.S.~Sacker and G.R.~Sell in \cite{Sacker_78_2} for skew product flows with compact base. It was generalized to nonautonomous dynamical systems with not necessarily compact base in \cite{Aulbach_01_2,Siegmund_02_4} and for systems defined on a half-line in \cite{Rasmussen_09_1}.

The Spectral Theorem (see \cite{Kloeden_11_2,Rasmussen_09_1} for the half-line case) describes the structure of the dichotomy spectrum.

\begin{theorem}[Sacker--Sell Spectral Theorem]\label{theo3}
  For the linear differential equation \eqref{Eq1}, there exists a $k\in\set{1,\dots, d}$ such that
  \begin{displaymath}
    \Sigma_{\rm SS} = [a_1, b_1]\cup\dots\cup [a_k,b_k]
  \end{displaymath}
  with $-\infty\le a_1\le b_1<a_2\le b_2< \dots<a_k\le b_k\le \infty$. In addition, there exists a corresponding filtration
  \[
  \{0\}=\mathcal W_0 \subsetneq\mathcal W_1\subsetneq\mathcal W_2\subsetneq\dots\subsetneq \mathcal W_k =\R^d\,,
  \]
  which satisfies the dynamical characterization
  \begin{displaymath}
    \mathcal W_i= \setB{\textstyle\xi\in \R^d: \sup_{t\in\R^+_0} \|X(t)\xi\|e^{-\gamma t} < \infty}
  \end{displaymath}
  for all $i\in\set{1,\dots,k-1}$ and $\gamma\in (b_i,a_{i+1})$\,.
\end{theorem}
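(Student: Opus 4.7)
The plan is to analyze the structure of the resolvent set $\rho := \oR \setminus \Sigma_{\rm SS}$. First I would show that $\rho$ is open in $\oR$ by checking that an exponential dichotomy with rate $\gamma$ persists under small perturbations of $\gamma$: the same projector $P$, the same constant $K$, and a slightly smaller $\alpha$ witness dichotomies for all rates $\gamma'$ in a neighbourhood of $\gamma$. The usual modifications at $\pm\infty$ (using $P=\mathds{1}$ or $P=0$) cover the endpoint cases. This already shows that $\Sigma_{\rm SS}$ is closed.

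The key structural input is that on $\R_0^+$ the pseudo-stable space $\mathcal{W}(\gamma)$ of a dichotomy with rate $\gamma$ is uniquely determined, as recalled in the paper. Two observations about $\gamma \mapsto \mathcal{W}(\gamma)$ drive the argument. First, $\mathcal{W}$ is locally constant on $\rho$: the openness proof above keeps $P$ fixed on a neighbourhood of every $\gamma \in \rho$, so $\mathcal{W}$ is constant on each connected component of $\rho$. Second, $\mathcal{W}$ is monotone: if $\gamma_1 < \gamma_2$ both lie in $\rho$ and $\xi \in \mathcal{W}(\gamma_1)$, then $\|X(t)\xi\| \le K e^{(\gamma_1-\alpha)t}\|\xi\|$; decomposing $\xi = \xi_s + \xi_u$ relative to the $\gamma_2$-dichotomy and using that $\|X(t)\xi_u\|$ grows at rate at least $\gamma_2 + \alpha' > \gamma_1 - \alpha$ forces $\xi_u = 0$, hence $\xi \in \mathcal{W}(\gamma_2)$.

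Combining these two facts, the distinct values of $\mathcal{W}(\gamma)$ as $\gamma$ ranges over $\rho$ form a strictly increasing chain of subspaces of $\R^d$, so there are at most $d+1$ of them, hence at most $d+1$ connected components of $\rho$. Writing $\rho$ as an increasing union of open intervals in $\oR$ and taking complements shows that $\Sigma_{\rm SS}$ is the union of at most $d$ closed intervals $[a_1,b_1], \dots, [a_k,b_k]$ with the claimed ordering. Denoting by $\mathcal{W}_i$ the common value of $\mathcal{W}(\gamma)$ for $\gamma \in (b_i, a_{i+1})$ for $i = 1,\dots,k-1$, and setting $\mathcal{W}_0 = \{0\}$ (corresponding to the component with $\gamma < a_1$ where $P=0$) and $\mathcal{W}_k = \R^d$ (where $P=\mathds{1}$), produces the required strict filtration.

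Finally, for the dynamical characterization with $\gamma \in (b_i, a_{i+1})$: if $\xi \in \mathcal{W}_i$, then $\|X(t)\xi\|e^{-\gamma t} \le K e^{-\alpha t}\|\xi\|$ is bounded; conversely if $\xi \notin \mathcal{W}_i$, decomposing $\xi = \xi_s + \xi_u$ with $\xi_u \ne 0$ gives $\|X(t)\xi\|e^{-\gamma t} \ge C e^{\alpha t} - K e^{-\alpha t}\|\xi_s\|$, which is unbounded. I expect the main obstacle to be the careful bookkeeping around the compactification endpoints $\pm\infty$: one must verify that the conventions for $P = 0$ and $P = \mathds{1}$ are consistent with the local constancy and monotonicity arguments used in the interior, and that the degenerate components of $\rho$ containing $\pm \infty$ correctly produce the endpoint filtration members $\mathcal{W}_0$ and $\mathcal{W}_k$.
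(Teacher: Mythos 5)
The paper does not prove Theorem~\ref{theo3} itself; it cites \cite{Kloeden_11_2,Rasmussen_09_1} for the half-line version, so there is no internal proof to compare against. Your proposal nevertheless follows the standard route (analyse the resolvent set $\rho = \oR\setminus\Sigma_{\rm SS}$, show openness of $\rho$, local constancy and monotonicity of $\gamma\mapsto\mathcal W(\gamma)$, read off the filtration). Those pieces that you spell out are essentially correct, including the unboundedness argument for the dynamical characterization.

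There is, however, a genuine gap at the pivotal step. You conclude ``at most $d+1$ distinct values of $\mathcal W(\gamma)$, hence at most $d+1$ connected components of $\rho$.'' Local constancy tells you each component of $\rho$ carries a single value of $\mathcal W$, and monotonicity tells you these values are nested, but neither fact rules out two \emph{different} components of $\rho$ carrying the \emph{same} pseudo-stable space. To bound the number of components you need the converse implication: if $\gamma_1<\gamma_2$ both lie in $\rho$ and $\mathcal W(\gamma_1)=\mathcal W(\gamma_2)$, then the entire interval $[\gamma_1,\gamma_2]$ lies in $\rho$. This is the actual core lemma in all standard proofs (Sacker--Sell, Siegmund, Kloeden--Rasmussen). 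Its proof is not automatic: one must use that on the half-line the pseudo-unstable complement is not unique and may be chosen the same for both dichotomies, and then interpolate the two estimates to produce a dichotomy with projector $P$ for every intermediate $\gamma$ with $\alpha=\min(\gamma-\gamma_1+\alpha_1,\,\gamma_2-\gamma+\alpha_2)$. The same lemma is also what guarantees the strict inclusions $\mathcal W_{i-1}\subsetneq\mathcal W_i$ in the filtration, which you assert but do not justify. You flag the $\pm\infty$ bookkeeping as the expected obstacle, but that is routine; the missing injectivity argument is the real content, and without it the bound $k\le d$ does not follow from what you have written.
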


Note that the linear space $\mathcal W_i$ is the pseudo-stable space of the exponential dichotomy with any growth rate taken from the spectral gap interval $(b_i, a_{i+1})$ for $i\in\set{1,\dots,k-1}$.

The following result on Sacker--Sell spectra of upper triangular systems follows from \cite{Battelli_15_1}. Note that such a statement is only true in the half-line case and does not hold for Sacker--Sell spectra on the entire time axis as demonstrated in \cite{Battelli_15_1}.

\begin{proposition}[Sacker--Sell spectrum of upper triangular systems]\label{prop1}
  Suppose that the linear differential equation \eqref{Eq1} is upper triangular, i.e. $a_{ij}(t)=0$ for all $i>j$ and $t\in\R^+_0$, and assume that the off-diagonal elements $a_{ij}(t)$ for all $i<j$ are bounded in $t\in\R_0^+$. Then the Sacker--Sell spectrum of \eqref{Eq1} coincides with that of its diagonal part $\dot x_i=a_{ii}(t)x_i$, $i\in\set{1,\dots,d}$, for which, the spectrum is the union of the intervals
  $[\alpha_i,\beta_i]$. If also the diagonal elements of the matrix $A(t)$ are bounded, then we have the representation
  \begin{equation}\label{bohlrep}
    \alpha_i=\liminf_{t-s\to\infty}{1\over t-s}\int^t_sa_{ii}(u)\,\rmd u \qandq \beta_i=\limsup_{t-s\to\infty}{1\over t-s}\int^t_sa_{ii}(u)\,\rmd u
  \end{equation}
  for all $i\in\set{1,\dots,d}$.
\end{proposition}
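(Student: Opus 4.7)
The plan is to decouple the assertion into a scalar Bohl-exponent computation and a triangular reduction, the latter being the substance of \cite{Battelli_15_1}.

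First, consider a scalar equation $\dot x=a(t)x$ with $a$ bounded. Its fundamental solution satisfies $X(t)X(s)^{-1}=\exp\bigl(\int_s^t a(u)\,\rmd u\bigr)$, so the exponential-dichotomy inequalities reduce to scalar estimates. A dichotomy with growth rate $\gamma$ and projector $P=1$ requires $\int_s^t a(u)\,\rmd u\le\ln K+(\gamma-\alpha)(t-s)$ for all $0\le s\le t$; boundedness of $a$ handles the short-interval case, while the long-interval case shows this is equivalent to $\limsup_{t-s\to\infty}\frac{1}{t-s}\int_s^t a(u)\,\rmd u<\gamma$. A symmetric argument (going backward in time) shows that a dichotomy with $P=0$ exists iff $\liminf_{t-s\to\infty}\frac{1}{t-s}\int_s^t a(u)\,\rmd u>\gamma$. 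The complementary values of $\gamma$ form exactly the interval with endpoints given by the Bohl expressions \eqref{bohlrep}. Applied to each scalar equation $\dot x_i=a_{ii}(t)x_i$, this yields both the interval structure of the diagonal spectrum and the Bohl-exponent representation.

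For the full upper-triangular system, each coordinate subspace $V_i:=\mathrm{span}(e_1,\dots,e_i)$ is invariant, since $\dot x_j=\sum_{k\ge j}a_{jk}(t)x_k$ depends only on $x_j,\dots,x_d$, and the induced quotient dynamics on $V_i/V_{i-1}$ is exactly $\dot x_i=a_{ii}(t)x_i$. One containment is immediate: any exponential dichotomy of the full system at rate $\gamma$ restricts to $V_i$ and descends to the quotient, so $\bigcup_{i=1}^d[\alpha_i,\beta_i]\subseteq\Sigma_{\rm SS}$ of the full system. The reverse containment is the content of \cite{Battelli_15_1} in the half-line setting: given an exponential dichotomy at rate $\gamma$ for each scalar equation, one constructs, by induction on $d$, an exponential dichotomy of the full system whose projector is block-upper-triangular with the scalar projectors on the diagonal.

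The main obstacle is this reverse containment, specifically the control of off-diagonal mixing when the scalar spectra interlace relative to the triangular ordering, e.g.~when $\alpha_i>\gamma>\beta_j$ for some $i<j$. In this regime the variation-of-constants formula produces integrals of the form $\int a_{ij}(u)\exp\bigl(\int_u^t a_{ii}\bigr)\exp\bigl(\int_s^u a_{jj}\bigr)\,\rmd u$ in which both exponentials may grow; the estimate succeeds by combining the uniform bound on the $a_{ij}$ with the exponential separation of the Bohl averages, and crucially uses the half-line hypothesis, since, as noted in the remark preceding the proposition, the analogous reduction fails on the whole line.
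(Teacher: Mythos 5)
Your proposal takes essentially the same route as the paper: the paper gives no internal proof at all, stating only that the result follows from \cite{Battelli_15_1}, and you likewise defer to that reference for the triangular reduction. Your scalar computation for a bounded $a(t)$---showing that the one-dimensional Sacker--Sell spectrum is $[\liminf,\limsup]$ of the averaged integral---is correct and is a helpful fill-in that the paper omits, since it is exactly what underlies \eqref{bohlrep}. One small caveat: you present the containment $\bigcup_i[\alpha_i,\beta_i]\subseteq\Sigma_{\rm SS}$ as ``immediate'' by restriction to $V_i$ and passage to the quotient, reserving the half-line hypothesis for the reverse containment only. In fact, both containments rely on the half-line setting: the counterexample of \cite{Battelli_15_1} is precisely a whole-line triangular system that \emph{does} admit an exponential dichotomy while its diagonal part does not, so ``full ED implies scalar ED'' also fails on $\R$. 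On the half-line this direction works because the pseudo-unstable space is not unique and can be chosen compatible with the triangular flag $V_1\subset\cdots\subset V_d$, after which restriction and passage to the quotient goes through; that freedom is a half-line phenomenon. Worth spelling out if you want to present the ``easy'' direction as self-contained rather than covered by the citation.
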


\begin{remark}\label{rem1}
  Note that the representation \eqref{bohlrep} does not hold if the diagonal elements of the matrix $A(t)$ are unbounded. As a counter example consider the one-dimensional system
  \begin{displaymath}
    \dot x = a(t)x\,,
  \end{displaymath}
  where $a:\R_0^+\to \R$ is defined by
  \begin{displaymath}
    a(t) = \left\{
    \begin{array}{ccl}
      n & : & t\in[2n,2n+1], n\in\N_0\,,\\
      -2n-1 & : & t\in[2n+1,2n+2], n\in\N_0\,.
    \end{array}
  \right.
  \end{displaymath}
  It follows that $\int^{n+3}_{n}a(u)\,\rmd u \le 0$ for all $n\in\N$, and it can be proved that
  \begin{displaymath}
    \lim_{t-s\to\infty}{1\over t-s} \int^{t}_{s}a(u)\,\rmd u =-\infty \,.
  \end{displaymath}
  However, the Sacker--Sell spectrum is given by $[-\infty,\infty]$, since $a(t)$ is arbitrarily close to $-\infty$ and $\infty$ on intervals of the length one. This shows that the representation \eqref{bohlrep} does not hold for unbounded coefficient matrices.
\end{remark}

\section{The Bohl spectrum}\label{sec3}

We first define the Bohl spectrum for each solution of \eqref{Eq1}. The Bohl spectrum of \eqref{Eq1} is then the union over the Bohl spectra of the solutions.

\begin{definition}[Bohl spectrum]\label{BohlSpectrum} Consider the linear nonautonomous differential equation \eqref{Eq1} in $\R^d$. The \emph{Bohl spectrum of a particular solution $X(\cdot)\xi$}, $\xi\not=0$, of \eqref{Eq1} is defined as
\begin{align*}
  \Sigma_{\xi}:=\Big\{&\lambda\in\oR: \hbox{there exist sequences $\set{t_n}_{n\in\N}$ and $\set{s_n}_{n\in\N}$  }\\
  & \hbox{ with } t_n-s_n\to\infty \hbox{ such that } \lim_{n\to\infty} \tfrac{1}{t_n-s_n}\ln\tfrac{\|X(t_n)\xi\|}{\|X(s_n)\xi\|}=\lambda
  \Big\}.
\end{align*}
The \emph{Bohl spectrum of \eqref{Eq1}} is defined as
\[
  \Sigma_{\rm Bohl}:=\bigcup_{\xi\in\R^d\setminus\{0\}}\Sigma_{\xi}\,.
\]
\end{definition}

\begin{remark}\label{Remark1}
(i) By Definition~\ref{LyapunovSpectrum}, we have $\chi_{-}(\xi), \chi_{+}(\xi)\in \Sigma_{\xi}$ for any $\xi\in\R^d\setminus\{0\}$, and we see that in contrast to looking at the asymptotic behavior at infinity of a solution by
using the Lyapunov exponent, the Bohl spectrum of this solution provides all possible growth
rates of this solution when the length of observation time tends to infinity and the
initial time is arbitrary.

(ii) The \emph{upper} and \emph {lower Bohl exponent} of a solution $X(\cdot)\xi$ are defined by
\[
\overline\beta(\xi):=
\limsup_{t-s\to\infty}\frac{1}{t-s}\ln\frac{\|X(t)\xi\|}{\|X(s)\xi\|}
,\quad
\underline\beta(\xi):=
\liminf_{t-s\to\infty}\frac{1}{t-s}\ln\frac{\|X(t)\xi\|}{\|X(s)\xi\|},
\]
see \cite[p.~171--172]{Daleckii_74_1} and \cite{Barabanov_01_1}. Thus, $\overline\beta(\xi)$ and $\underline\beta(\xi)$ measure the biggest and smallest growth rate of the solution $X(\cdot)\xi$, when the length of observation time tends to infinity, and we have
\[
  \overline\beta(\xi)=\sup\Sigma_{\xi} \qandq \underline\beta(\xi)=\inf\Sigma_{\xi}\,.
\]
We note that the notion of Bohl exponent used in papers on differential algebraic equations and control theory is different (see the references cited in the Introduction).

(iii) The definition of Bohl spectrum is independent of the norm in $\R^d$.

(iv) A different definition of a Bohl spectrum for discrete systems depending on certain invariant splittings was proposed in \cite[Definition~3.8.1]{Poetzsche_10_2}, and another spectrum between the Lyapunov and Sacker--Sell spectrum based on nonuniform exponential dichotomies was introduced in \cite{Chu_15_1}.
\end{remark}
Note that $\overline\beta(\xi)$ can be $\infty$, and $\underline\beta(\xi)$ can be $-\infty$.
For an arbitrarily chosen $a \in \R$, define
\begin{displaymath}
  [-\infty, a]:=(-\infty,a] \cup \set{-\infty}\,,\quad\quad\quad   [a, \infty]  := [a, \infty) \cup \set{\infty}
\end{displaymath}
and
\begin{displaymath}
[-\infty,-\infty]:= \set{-\infty}, \quad\quad\  [\infty,\infty]:= \set{\infty}, \quad\quad \quad [-\infty,\infty] := \oR\,.
\end{displaymath}

The following proposition describes fundamental properties of the Bohl spectrum of a particular solution.

\begin{proposition}\label{Lemma}
  Consider the linear nonautonomous differential equation \eqref{Eq1} in $\R^d$. For all $\xi\in \R^d\setminus\{0\}$, the following statements hold:
  \begin{itemize}
  \item [(i)]  We have the representation
  \begin{align*}
    \Sigma_{\xi}:=&\Big\{\lambda\in\oR: \hbox{ there exist sequences $\set{t_n}_{n\in\N}$ and $\set{s_n}_{n\in\N}$ with }\\
    & \quad t_n-s_n\to\infty \mand s_n\to\infty \hbox{ such that } \lim_{n\to\infty} \tfrac{1}{t_n-s_n}\ln\tfrac{\|X(t_n)\xi\|}{\|X(s_n)\xi\|}=\lambda
    \Big\}\,,
  \end{align*}
  i.e.~in the definition of Bohl spectrum we can always assume $s_n\to\infty$.
    \item [(ii)]$\Sigma_{\xi}=\Sigma_{\lambda\xi}$ for all $\lambda\in \R\setminus\{0\}$.
  \item [(iii)] $\Sigma_{\xi}=\big[\underline\beta(\xi), \overline\beta(\xi)\big]$.
  \item [(iv)] Suppose that there exists a constant $M>0$ such that
  \begin{equation}\label{Boundedness}
    \|A(t)\|\leq M \quad \faa t\in\R_0^+.
  \end{equation}
  Then $\Sigma_{\xi} \subset [-M,M]$.
  \end{itemize}
\end{proposition}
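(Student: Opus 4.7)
My plan is to dispose of (ii) and (iv) by direct computation, handle (i) by a subsequence trick that copes with a potentially bounded $s_n$, and derive (iii) from an intermediate value argument along a linear path joining extremal witnessing sequences. For (ii), simply observe that $\|X(t)\lambda\xi\|/\|X(s)\lambda\xi\| = \|X(t)\xi\|/\|X(s)\xi\|$ whenever $\lambda \neq 0$, so the defining set is unchanged. For (iv), the standard Gronwall estimate under $\|A(t)\| \le M$ yields
\[
  e^{-M(t-s)} \;\le\; \frac{\|X(t)\xi\|}{\|X(s)\xi\|} \;\le\; e^{M(t-s)} \quad \text{for all } 0 \le s \le t,
\]
so every admissible Bohl quotient, and hence every accumulation point, lies in $[-M,M]$.

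For (i), the inclusion $\supseteq$ is clear. For $\subseteq$, I would take $\lambda \in \Sigma_\xi$ with witnessing sequences $(t_n, s_n)$ and split into two cases. If $s_n$ has a subsequence diverging, just restrict. Otherwise $s_n$ stays bounded, and passing to a subsequence with $s_n \to s^* \in \R_0^+$, invertibility of $X(s^*)$ yields $\ln\|X(s_n)\xi\| \to \ln\|X(s^*)\xi\|$, a finite number. This reduces the limit to $\tfrac{1}{t_n}\ln\|X(t_n)\xi\| \to \lambda$ with $t_n \to \infty$. To rebuild the limit with an unbounded first argument, set $\sigma_k := k$ and extract a subsequence $\tau_k := t_{n_k}$ growing so fast (for instance $\tau_k \ge k^2 + k(1 + |\ln\|X(k)\xi\||)$) that both $\sigma_k/\tau_k \to 0$ and $|\ln\|X(\sigma_k)\xi\||/(\tau_k - \sigma_k) \to 0$. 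A direct computation then gives $\tfrac{1}{\tau_k-\sigma_k}\ln(\|X(\tau_k)\xi\|/\|X(\sigma_k)\xi\|) \to \lambda$ with $\sigma_k \to \infty$.

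For (iii), the inclusion $\Sigma_\xi \subseteq [\underline\beta(\xi), \overline\beta(\xi)]$ is immediate from the identities $\overline\beta(\xi) = \sup\Sigma_\xi$ and $\underline\beta(\xi) = \inf\Sigma_\xi$ recorded in Remark~\ref{Remark1}. The endpoints themselves lie in $\Sigma_\xi$ since $\overline\beta$ and $\underline\beta$ are attained along sequences, with the natural conventions for $\pm\infty$. For an interior value $\lambda \in (\underline\beta(\xi), \overline\beta(\xi))$, I would take witnessing sequences $(t_n^+, s_n^+)$ and $(t_n^-, s_n^-)$ for $\overline\beta$ and $\underline\beta$, and apply the intermediate value theorem to the continuous map
\[
  \theta \;\longmapsto\; \frac{1}{t(\theta) - s(\theta)} \ln \frac{\|X(t(\theta))\xi\|}{\|X(s(\theta))\xi\|}, \quad \theta \in [0,1],
\]
with $t(\theta) := (1-\theta)t_n^+ + \theta t_n^-$ and $s(\theta) := (1-\theta)s_n^+ + \theta s_n^-$. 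For large $n$ the values at $\theta = 0, 1$ straddle $\lambda$, so some $\theta_n$ realises $\lambda$; moreover $t(\theta_n) - s(\theta_n)$ is a convex combination of the two positive quantities $t_n^\pm - s_n^\pm$, hence bounded below by their minimum, which diverges.

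The hard part will be the subsequence construction in (i): without a growth hypothesis on $A$, there is no a priori control on $\ln\|X(k)\xi\|$, and the trick is to thin out $t_n$ aggressively enough to dominate this otherwise unknown sequence. The IVT in (iii) remains robust when $\overline\beta = \infty$ or $\underline\beta = -\infty$, since the witnessing quotients eventually cross any finite $\lambda$, while $\pm\infty$ themselves belong to $\Sigma_\xi$ directly by definition.
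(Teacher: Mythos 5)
Your proposal is correct and follows essentially the same route as the paper's proof: the obvious norm identity for (ii), a two-sided Gronwall estimate for (iv), a case split on whether $s_n$ is bounded for (i), and an intermediate value argument along a convex interpolation of witnessing sequences for (iii). The only noteworthy variation is in part (i), Case 2: the paper keeps the original starting times and shifts them by $n$ (setting $\tilde s_n = s_{k_n}+n$ and using a compactness bound over $[0,\Gamma]$ to control the correction), whereas you discard them in favour of $\sigma_k = k$ and thin the end times so aggressively that $|\ln\|X(k)\xi\||/(\tau_k-\sigma_k)\to 0$ regardless of the growth of $X$ --- both devices achieve the same reduction.
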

\begin{proof}
  (i) Let $\lambda\in\Sigma_{\xi}$ be arbitrary. Then there exist two sequences $\set{t_n}_{n\in\N}$ and $\set{s_n}_{n\in\N}$ such that $t_n\ge s_n\ge 0$ and
  \begin{equation}\label{Eq6a}
  \lim_{n\to\infty} t_n-s_n =\infty
  \qandq
  \lim_{n\to\infty}
  \frac{1}
  {t_n-s_n}
  \ln\frac{\|X(t_n)\xi\|}{\|X(s_n)\xi\|}=\lambda\,.
  \end{equation}
  To conclude the proof of this part, we need to construct two sequences $\set{\widetilde t_n}_{n\in\N}$ and $\set{\widetilde s_n}_{n\in\N}$ such that
  \begin{equation}\label{Eq6c}
  \lim_{n\to\infty}\widetilde s_n=\infty\,,\quad \lim_{n\to\infty}\widetilde t_n-\widetilde s_n=\infty\,, \quad \lim_{n\to\infty}
  \frac{1}
  {\widetilde t_n-\widetilde s_n}
  \ln\frac{\|X(\widetilde t_n)\xi\|}{\|X(\widetilde s_n)\xi\|}=\lambda\,.
  \end{equation}
  We now consider two separated cases:

  \emph{Case 1}: The sequence $\set{s_n}_{n\in\N}$ is unbounded. Then there exists a subsequence  $\set{s_{k_n}}_{n\in\N}$  of $\set{s_n}_{n\in\N}$ such that $\lim_{n\to\infty} s_{k_n}=\infty$. Letting $\widetilde s_n:=s_{k_n}$ and $\widetilde t_n:=t_{k_n}$. Then these sequences satisfy \eqref{Eq6c}.

  \emph{Case 2}: The sequence $\set{s_n}_{n\in\N}$ is bounded. Let $\Gamma:=\sup_{n\in\N} s_n$, and let $n\in\N$ be an arbitrary positive integer. Since $\lim_{m\to\infty} t_m-s_m=\infty$ and
  \[
  \sup_{m\in\N}\left|\ln\frac{\|X(s_m+n)\xi\|}{\|X(s_m)\xi\|}\right|
  \leq
  \sup_{t\in [0,\Gamma]}\left|\ln\frac{\|X(t+n)\xi\|}{\|X(t)\xi\|}\right|<\infty\,,
  \]
  it follows that
  \[
  \lim_{m\to\infty}
  \frac{1}{t_m-s_m-n}\left|\ln\frac{\|X(s_m+n)\xi\|}{\|X(s_m)\xi\|} \right|=0.
  \]
  Consequently, there exists $k_n\in\N$ such that
  \begin{equation}\label{Eq6d}
  t_{k_n}-s_{k_n}\geq n^2 \qandq \quad \frac{1}{t_{k_n}-s_{k_n}-n}\left|\ln\frac{\|X(s_{k_n}+n)\xi\|}{\|X(s_{k_n})\xi\|} \right|\leq \frac{1}{n}.
  \end{equation}
  Define two sequences $\set{\widetilde t_n}_{n\in\N}$ and $\set{\widetilde s_n}_{n\in\N}$ by
  \[
  \widetilde t_n=t_{k_n} \qandq \widetilde s_n:=s_{k_n}+n \fa n\in\N\,,
  \]
  where $k_n$ satisfies \eqref{Eq6d}. Obviously, $\lim_{n\to\infty}\widetilde s_n=\infty, \lim_{n\to\infty}\widetilde t_n-\widetilde s_n=\infty$. It remains to compute $\lim_{n\to\infty}\frac{1}{\widetilde t_n-\widetilde s_n}\ln\frac{\|X(\widetilde t_n)\xi\|}{\|X(\widetilde s_n)\xi\|}$. By definition of $\set{\widetilde t_n}_{n\in\N}$ and $\set{\widetilde s_n}_{n\in\N}$, we have
  \begin{align*}
  \frac{1}{\widetilde t_n-\widetilde s_n}\ln\frac{\|X(\widetilde t_n)\xi\|}{\|X(\widetilde s_n)\xi\|}
  &=
  \frac{1}{t_{k_n}-s_{k_n}-n}\ln\frac{\|X(t_{k_n})\xi\|}{\|X(s_{k_n}+n)\xi\|}\\[1ex]
  &=
  \frac{1}{t_{k_n}-s_{k_n}-n}
  \left(\ln\frac{\|X(t_{k_n})\xi\|}{\|X(s_{k_n})\xi\|}+\ln\frac{\|X(s_{k_n})\xi\|}{\|X(s_{k_n}+n)\xi\|}\right).
  \end{align*}
  Using \eqref{Eq6d}, we obtain that
  \begin{equation}\label{Eq6e}
  \left|\frac{1}{\widetilde t_n-\widetilde s_n}\ln\frac{\|X(\widetilde t_n)\xi\|}{\|X(\widetilde s_n)\xi\|}-
  \frac{1}{t_{k_n}-s_{k_n}-n}
  \ln\frac{\|X(t_{k_n})\xi\|}{\|X(s_{k_n})\xi\|} \right|\leq \frac{1}{n}.
  \end{equation}
  On the other hand, from $t_{k_n}-s_{k_n}\geq n^2$, we derive that $\lim_{n\to\infty}\frac{t_{k_n}-s_{k_n}}{t_{k_n}-s_{k_n}-n}=1$ and therefore
  \[
  \lim_{n\to\infty}
  \frac{1}{t_{k_n}-s_{k_n}-n}
  \ln\frac{\|X(t_{k_n})\xi\|}{\|X(s_{k_n})\xi\|}
  =
  \lim_{n\to\infty}
  \frac{1}{t_{k_n}-s_{k_n}}
  \ln\frac{\|X(t_{k_n})\xi\|}{\|X(s_{k_n})\xi\|}
  =\lambda,
  \]
  which together with \eqref{Eq6e} implies that the sequences $\set{\widetilde t_n}_{n\in\N}$ and $\set{\widetilde s_n}_{n\in\N}$ satisfy \eqref{Eq6c} and the proof of this part is complete.

  (ii) This assertion follows directly from Definition~\ref{BohlSpectrum}.

  (iii)   Let $a<b$ be in $\Sigma_{\xi}$, and choose $\lambda\in(a,b)$ arbitrarily. Then there exist sequences $\set{t_n}_{n\in\N}, \set{s_n}_{n\in\N}$, $\set{\tau_n}_{n\in\N}$ and $\set{\sigma_n}_{n\in\N}$ such that $t_n-s_n > n$, $\tau_n-\sigma_n>n$,
  \[
  \lim_{n\to\infty}\frac{1}{t_n-s_n}
  \ln\frac{\|X(t_n)\xi\|}{\|X(s_n)\xi\|}=a
  \qandq
  \lim_{n\to\infty}\frac{1}{\tau_n-\sigma_n}
  \ln\frac{\|X(\tau_n)\xi\|}{\|X(\sigma_n)\xi\|}=b\,.
  \]
  Consequently, there exists $N\in\N$ such that for all $n\geq N$,
  \begin{equation}\label{Eq7}
  \frac{1}{t_n-s_n}
  \ln\frac{\|X(t_n)\xi\|}{\|X(s_n)\xi\|}
  <\lambda<
  \frac{1}{\tau_n-\sigma_n}
  \ln\frac{\|X(\tau_n)\xi\|}{\|X(\sigma_n)\xi\|}\,.
  \end{equation}
  Consider the following continuous function $g:[0,1]\to\R$ defined by
  \[
  g(\theta):=\frac{1}{ \theta (t_n-s_n)+(1-\theta)(\tau_n-\sigma_n)}\ln\frac{\|X(\theta t_n+(1-\theta)\tau_n)\xi\|}{\|X(\theta s_n+(1-\theta)\sigma_n)\xi\|}\,.
  \]
  From \eqref{Eq7}, we have $g(0)>\lambda>g(1)$, and by the Intermediate Value Theorem, there exists $\theta_n\in (0,1)$
  such that $g(\theta_n)=\lambda$. This together with the fact that $\lim_{n\to\infty} \theta_n(t_n-s_n)+(1-\theta_n)(\tau_n-\sigma_n)=\infty$ implies that $\lambda\in\Sigma_{\xi}$ and completes the proof.

  (iv) Let $\xi\in\R^d\setminus\{0\}$ be arbitrary. We have the integral equality
  \[
  X(t)\xi=X(s)\xi+\int_s^{t} A(u)X(u)\xi\,\rmd u \fa t\geq s\,.
  \]
  Thus,
  \[
  \|X(t)\xi\|
  \leq
  \|X(s)\xi\|+M\int_s^t \|X(u)\xi\|\,\rmd u\,.
  \]
  Applying Gronwall's inequality yields that
  \[
  \|X(t)\xi\|\leq e^{M(t-s)}\|X(s)\xi\|\fa t\geq s\geq 0\,,
  \]
  which implies that $\sup\Sigma_{\xi}\leq M$. Similarly, $\inf\Sigma_{\xi}\geq -M$. Hence, by~(iii), $\Sigma_{\xi}=[\inf\Sigma_{\xi},\sup\Sigma_{\xi}]\subset [-M,M]$, which completes the proof.
\end{proof}

\begin{proposition}\label{lemma2}
  Consider a linear nonautonomous differential equation $\dot x=A(t)x$ in $\R^d$, and let $x(t)$, $y(t)$  be solutions such that the angle between them is bounded below by
  a positive number. Then if $\alpha\beta\neq 0$, the solutions $t\mapsto\alpha x(t)+\beta y(t)$ all have the
  same Bohl spectrum.
\end{proposition}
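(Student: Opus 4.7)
My plan is to translate the angle bound into a uniform (in $t$) two-sided comparison between $\|\alpha x(t)+\beta y(t)\|$ and $|\alpha|\,\|x(t)\|+|\beta|\,\|y(t)\|$, and then read off from this comparison that the Bohl spectrum of $\alpha x+\beta y$ does not depend on the particular choice of $(\alpha,\beta)$ as long as $\alpha\beta\ne 0$. Concretely, I first record that the hypothesis supplies some $\theta_0>0$ with $|\langle x(t),y(t)\rangle|\le (\cos\theta_0)\,\|x(t)\|\,\|y(t)\|$ for all $t\ge 0$, the inner product being the Euclidean one (as allowed by Remark~\ref{Remark1}(iii)). Expanding the squared norm and applying the elementary bound $2|\alpha\beta|\,\|x(t)\|\,\|y(t)\|\le \alpha^2\|x(t)\|^2+\beta^2\|y(t)\|^2$ to the cross term yields
\begin{equation*}
\|\alpha x(t)+\beta y(t)\|^2\ge (1-\cos\theta_0)\bigl(\alpha^2\|x(t)\|^2+\beta^2\|y(t)\|^2\bigr),
\end{equation*}
while the triangle inequality supplies the matching upper bound. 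Combined with the elementary fact that $\sqrt{a^2+b^2}$ and $a+b$ are equivalent for $a,b\ge 0$, this produces constants $c_1,c_2>0$, depending only on $\theta_0$, such that
\begin{equation*}
c_1\bigl(|\alpha|\,\|x(t)\|+|\beta|\,\|y(t)\|\bigr)\le \|\alpha x(t)+\beta y(t)\|\le c_2\bigl(|\alpha|\,\|x(t)\|+|\beta|\,\|y(t)\|\bigr)
\end{equation*}
for all $\alpha,\beta\in\R$ and $t\ge 0$.

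\smallskip

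Given any two pairs $(\alpha,\beta),(\alpha',\beta')$ with $\alpha\beta\ne 0$ and $\alpha'\beta'\ne 0$, dividing the two estimates produces a constant $K>0$, independent of $t$, such that
\begin{equation*}
K^{-1}\le \frac{\|\alpha x(t)+\beta y(t)\|}{\|\alpha' x(t)+\beta' y(t)\|}\le K \fa t\ge 0.
\end{equation*}
Writing $z_1:=\alpha x+\beta y$ and $z_2:=\alpha' x+\beta' y$ and applying this pointwise bound at times $t$ and $s$, the log-quotients satisfy
\begin{equation*}
\biggl|\frac{1}{t-s}\ln\frac{\|z_1(t)\|}{\|z_1(s)\|}-\frac{1}{t-s}\ln\frac{\|z_2(t)\|}{\|z_2(s)\|}\biggr|\le \frac{2\ln K}{t-s},
\end{equation*}
which tends to zero as $t-s\to\infty$. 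By Definition~\ref{BohlSpectrum}, any $\lambda\in\Sigma_{z_1}$ witnessed by sequences $(t_n,s_n)$ with $t_n-s_n\to\infty$ is equally witnessed in $\Sigma_{z_2}$ by the same sequences, and vice versa, so $\Sigma_{z_1}=\Sigma_{z_2}$.

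\smallskip

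The main content of the proof, and the only place where the hypothesis is used, is the uniform norm equivalence of the first paragraph; once that is in hand, the rest is essentially a rewriting of the definition of $\Sigma_\xi$. I do not anticipate serious difficulty, the essential input being that a lower bound on the angle forces $|\cos\theta(t)|$ away from $1$ uniformly in $t$. The hypothesis $\alpha\beta\ne 0$ cannot be relaxed: if, say, $\beta=0$, then the solution collapses to a multiple of $x$, whose Bohl spectrum can be strictly smaller than the ``generic'' spectrum of $\alpha x+\beta y$ with both coefficients nonzero.
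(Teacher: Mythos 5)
Your proof is correct and follows essentially the same approach as the paper's: both establish a time-uniform two-sided comparison between $\|\alpha x(t)+\beta y(t)\|$ and a quantity equivalent to $|\alpha|\,\|x(t)\|+|\beta|\,\|y(t)\|$ from the angle hypothesis, and then conclude via the same telescoping log-quotient argument. The only cosmetic difference is that you expand $\|\alpha x+\beta y\|^2$ and absorb the cross term, whereas the paper solves the $2\times 2$ Gram system in the unit vectors $e_1(t),e_2(t)$ to read off $\|x(t)\|$ and $\beta\|y(t)\|$ from $\langle z(t),e_i(t)\rangle$ and compares with $\max\{\|x(t)\|,|\beta|\|y(t)\|\}$.
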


\begin{proof}
  We use the Euclidean norm $\|\cdot\|$ on $\R^d$. Without loss of generality, we may assume that $\alpha=1$. So we consider the solutions
  \[ z(t)=x(t)+\beta y(t).\]
  If we define
  \[ e_1(t)=\frac{x(t)}{\|x(t)\|} \qandq e_2(t)=\frac{y(t)}{\|y(t)\|}\,,\]
  we see that
  \[ z(t)=\|x(t)\|e_1(t)+\beta \|y(t)\|e_2(t)\]
  and
  \[ \|x(t)\|
  = (1-\langle e_1(t),e_2(t)\rangle^2)^{-1}[\langle z(t),e_1(t)\rangle-\langle e_1(t),e_2(t)\rangle\cdot \langle z(t),e_2(t)\rangle]\]
  and
  \[ \beta\|y(t)\|
  = (1-\langle e_1(t),e_2(t)\rangle^2)^{-1}[-\langle e_1(t),e_2(t)\rangle\cdot \langle z(t),e_1(t)\rangle+\langle z(t),e_2(t)\rangle]\,.\]
  By the angle assumption, we have $1-\langle e_1(t),e_2(t)\rangle^2\ge \delta$ for some $\delta>0$. This implies
  \[ \|z(t)\|  \le 2\max\{\|x(t)\|, |\beta| \|y(t)\|\} \le \frac{4}{\delta}\|z(t)\|\,.\]
  Now let $z_1(t)$ correspond to $\beta_1$ and $z_2(t)$ to $\beta_2$. Then we note that
  \[ \frac{\|z_1(t)\|}{\|z_2(t)\|} \le \frac{4}{\delta}\frac{\max\{\|x(t)\|, |\beta_1| \|y(t)\|\}}{\max\{\|x(t)\|, |\beta_2| \|y(t)\|\}}
  \le \frac{4R}{\delta r}\,,\]
  where $R=\max\{|\beta_1|,|\beta_2|\}$ and $r=\min\{|\beta_1|,|\beta_2|\}$. Of course, we can interchange the indices $1$ and $2$ here.
  Then
  \[ \frac{\|z_1(t)\|}{\|z_1(s)\|} = \frac{\|z_1(t)\|}{\|z_2(t)\|} \frac{\|z_2(t)\|}{\|z_2(s)\|} \frac{\|z_2(s)\|}{\|z_1(s)\|}
  \le \frac{16R^2}{\delta^2 r^2}\frac{\|z_2(t)\|}{\|z_2(s)\|}=N\frac{\|z_2(t)\|}{\|z_2(s)\|}\,, \]
  where $N:= \frac{16R^2}{\delta^2 r^2}$.
  It follows that
  \[ \frac{1}{t-s}\ln\frac{\|z_1(t)\|}{\|z_1(s)\|}\le \frac{\ln\,N}{t-s}+ \frac{1}{t-s}\ln\frac{\|z_2(t)\|}{\|z_2(s)\|}\,.\]
  Thus,
  \[ \limsup_{t-s\to\infty}\frac{1}{t-s}\ln\frac{\|z_1(t)\|}{\|z_1(s)\|}\le \limsup_{t-s\to\infty}\frac{1}{t-s}\ln\frac{\|z_2(t)\|}{\|z_2(s)\|}\,.\]
  Switching the indices $1$ and $2$, we get equality. Next from
  \[ \frac{1}{t-s}\ln\frac{\|z_2(t)\|}{\|z_2(s)\|}\ge -\frac{\ln\,N}{t-s}+ \frac{1}{t-s}\ln\frac{\|z_1(t)\|}{\|z_1(s)\|}\,,\]
  we get
  \[ \liminf_{t-s\to\infty}\frac{1}{t-s}\ln\frac{\|z_2(t)\|}{\|z_2(s)\|}\ge \liminf_{t-s\to\infty}\frac{1}{t-s}\ln\frac{\|z_1(t)\|}{\|z_1(s)\|}\]
  and switching the indices $1$ and $2$, we get equality also. The conclusion is that $z_1(t)$ and $z_2(t)$ have the same Bohl spectrum.
\end{proof}

\begin{remark}
  We demonstrate that the common Bohl spectrum of the solution $t\mapsto\alpha x(t)+\beta y(t)$ in Proposition~\ref{lemma2} does not depend just on $\Sigma_x$ and $\Sigma_y$. Consider the diagonal system
  \begin{align*}
    \dot x &=0\,,\\
    \dot y&=a(t)y\,,
  \end{align*}
  where $a(t)=1$ if $T_{2k}\le t\le T_{2k+1}$, and $a(t)=-1$ if $T_{2k+1}\le t\le T_{2k+2}$. Here
  $T_k$ is an increasing sequence with $T_0=0$ and $T_{k+1}-T_k\to\infty$ as $k\to\infty$.
  Then if we take the solutions $x(t)=(1,0)$ and $y(t)=\big(0,\exp(\int^t_0a(u)\,\rmd u)\big)$, it is easy to see
  that $\Sigma_x=\{0\}$ and $\Sigma_y=[-1,1]$. By appropriate choice of the sequence $T_k$, we can arrange that $\int^t_0a(u)\,\rmd u\ge 0$ for $t\ge 0$.
  Then if we use the maximum norm in $\R^2$, we see that $\|x(t)+y(t)\|=|y(t)|$ for all $t\ge 0$.
  This means that for all $t\ge s\ge0$, we have
  \[ \frac{1}{t-s}\ln\frac{\|x(t)+y(t)\|}{\|x(s)+y(s)\|}=\frac{1}{t-s}\ln\frac{\|y(t)\|}{\|y(s)\|}.\]
  It follows that $\Sigma_{x+y}=\Sigma_y$.

  On the other hand, again by appropriate choice of the sequence $T_k$, we can arrange that $\int^t_0a(u)\,\rmd u\le 0$
  for $t\ge T_2$. Then if we use the maximum norm in $\R^2$, we see that $\|x(t)+y(t)\|=|x(t)|=1$ for all $t\ge T_2$.
  So for all $t\ge T_2$ and $s\ge T_2$, we get
  \[ \frac{1}{t-s}\ln\frac{\|x(t)+y(t)\|}{\|x(s)+y(s)\|}=\frac{1}{t-s}\ln\frac{1}{1}=0\,,\]
  which implies that $\Sigma_{x+y}=\Sigma_x$.
\end{remark}

\section{Spectral Theorem}\label{sec4}

We prove in this section that the Bohl spectrum of a locally integrable linear nonautonomous differential equation consists of at most finitely many intervals, the number of which is bounded by the dimension of the system, and we associate a filtration of subspaces to these spectral intervals.

\begin{theorem}[Bohl Spectral Theorem]\label{theo2}
  Consider the linear nonautonomous differential equation \eqref{Eq1} in $\R^d$. Then its Bohl spectrum consists of $k$ (not necessarily closed) disjoint
  intervals, i.e.
  \[
  \Sigma_{\rm Bohl}=I_1\cup\dots\cup I_k,
  \]
  where $1\leq k\leq d$ and $I_1,I_2,\dots,I_k$ are ordered intervals. The interval $I_1$ can be unbounded from below, $I_k$ can be unbounded from above, and $I_2,\dots,I_{k-1}$ are bounded. There exists a corresponding filtration
  \begin{equation}\label{Eq8d}
  \{0\}=\mathcal S_0 \subsetneq\mathcal S_1\subsetneq\mathcal S_2\subsetneq\dots\subsetneq \cS_k=\R^d
  \end{equation}
  satisfying the following dynamical characterization
  \begin{equation}\label{Eq8b}
    \mathcal S_i\setminus\{0\}= \setb{\xi\in \R^d: \Sigma_{\xi}\subset \textstyle\bigcup_{j=1}^i I_j }\fa i\in\set{1,\dots,k}\,.
  \end{equation}
\end{theorem}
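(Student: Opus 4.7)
My approach is to work directly with the connected components of $\Sigma_{\rm Bohl}\subseteq\oR$. Since every $\Sigma_\xi$ is an interval by Proposition~\ref{Lemma}(iii) and any connected subset of $\oR$ is an interval, the union $\Sigma_{\rm Bohl}=\bigcup_{\xi\ne 0}\Sigma_\xi$ decomposes into connected components which are themselves intervals; I call these components $I_1,I_2,\dots$ in their natural order. The sets $\mathcal S_i$ are then forced by the characterization \eqref{Eq8b}, and the theorem reduces to proving that each $\mathcal S_i$ is a linear subspace, that the chain $\mathcal S_0\subsetneq\mathcal S_1\subsetneq\dots\subsetneq\mathcal S_k=\R^d$ is strict with $k\le d$, and that every interior component $I_j$ is bounded.

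The whole structural part reduces to a single technical lemma, the \emph{upper-Bohl subadditivity}
\[
\overline\beta(\xi_1+\xi_2)\le\max\setb{\overline\beta(\xi_1),\overline\beta(\xi_2)}\fa\xi_1,\xi_2\in\R^d\mwith\xi_1+\xi_2\ne 0.
\]
Granting this, the subspace property of $\mathcal S_i$ is immediate: if $\xi_1,\xi_2\in\mathcal S_i$ and $\xi_1+\xi_2\ne 0$, each connected $\Sigma_{\xi_l}$ lies in a single component $I_{j_l}$ with $j_l\le i$, subadditivity yields $\overline\beta(\xi_1+\xi_2)\le\sup I_i$, and the connected set $\Sigma_{\xi_1+\xi_2}$ sits in some component $I_j$ with $\inf I_j\le\sup\Sigma_{\xi_1+\xi_2}\le\sup I_i$, forcing $j\le i$ by the ordering of the components; closure under scalar multiplication is Proposition~\ref{Lemma}(ii). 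Strict inclusions $\mathcal S_{i-1}\subsetneq\mathcal S_i$ follow by picking any $\lambda\in I_i$ and any $\xi$ realising $\lambda\in\Sigma_\xi$: the connected $\Sigma_\xi$ meets $I_i$, hence lies in $I_i$, placing $\xi$ in $\mathcal S_i\setminus\mathcal S_{i-1}$. Once all components are exhausted we have $\mathcal S_k=\R^d$, and a strictly increasing chain of subspaces of $\R^d$ has length at most $d$, so $k\le d$. Finally, an interior component must have Bohl-spectral gaps on both sides and is therefore bounded; only $I_1$ and $I_k$ can stretch to $-\infty$ or $+\infty$.

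The \emph{main obstacle}, which I would attack first, is the upper-Bohl subadditivity itself. Combining the definition of $\overline\beta$ with a Gronwall estimate on bounded time intervals, the inequality $\overline\beta(\xi)\le c$ is equivalent to the existence, for each $\eps>0$, of a constant $K>0$ with $\norm{X(t)\xi}\le K e^{(c+\eps)(t-s)}\norm{X(s)\xi}$ for all $0\le s\le t$. The naive triangle-inequality bound $\norm{X(t)(\xi_1+\xi_2)}\le(K_1+K_2)e^{(c+\eps)(t-s)}\max\setb{\norm{X(s)\xi_1},\norm{X(s)\xi_2}}$ is not yet a bound in terms of $\norm{X(s)(\xi_1+\xi_2)}$, since $X(s)\xi_1$ and $X(s)\xi_2$ may nearly cancel. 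My plan to overcome this is to apply a Perron (Lyapunov) transformation reducing \eqref{Eq1} to upper triangular form; such a transformation is bounded with bounded inverse, and a short calculation shows that it preserves $\overline\beta(\xi)$ and $\underline\beta(\xi)$ for every $\xi$. In upper triangular form the flag of subspaces spanned by the first few basis vectors is invariant under the flow, so one can argue by induction on the top non-zero coordinate of $\xi_1+\xi_2$: the $m$th diagonal equation is driven only by strictly higher components, and the variation-of-constants formula together with Proposition~\ref{prop1} bounds $\overline\beta(\xi_1+\xi_2)$ in terms of the diagonal Bohl exponents attached to the non-zero coordinates of $\xi_1$ and $\xi_2$, thereby bypassing the cancellation issue, delivering the subadditivity, and through the structural argument above the Spectral Theorem.
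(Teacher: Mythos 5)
Your structural outline---decomposing $\Sigma_{\rm Bohl}$ into connected components, characterizing the $\mathcal S_i$ via \eqref{Eq8b}, and bounding $k$ by the length of the chain of subspaces---mirrors the paper's argument. However, the key technical lemma you propose, the \emph{upper-Bohl subadditivity} $\overline\beta(\xi_1+\xi_2)\le\max\{\overline\beta(\xi_1),\overline\beta(\xi_2)\}$, is false, so the proof does not go through. A counterexample is furnished by the paper's own Example~\ref{exam1}: by Theorem~\ref{Theorem_Barabanov} one can realize as $\overline\beta_A$ essentially any bounded, direction-dependent function on $\R^2\setminus\{0\}$ that is invariant under $\xi\mapsto\alpha\xi$ and has $G_\delta$ superlevel sets. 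Taking a function close to $|\cos\phi|$, one obtains a bounded two-dimensional system in which, e.g., $\xi_1=(\eps,1)$ and $\xi_2=(\eps,-1+\eps^2)$ have $\overline\beta_A(\xi_1),\overline\beta_A(\xi_2)=O(\eps)$ while $\xi_1+\xi_2=(2\eps,\eps^2)$ points nearly along the $x$-axis and so has $\overline\beta_A(\xi_1+\xi_2)$ close to $1$. The upper-triangular reduction you sketch does not rescue the claim: after a kinematic transformation the cancellation you worried about simply reappears in coordinate form, so the variation-of-constants estimate still controls $\|X(t)(\xi_1+\xi_2)\|$ only in terms of $\max\{\|X(s)\xi_1\|,\|X(s)\xi_2\|\}$, not $\|X(s)(\xi_1+\xi_2)\|$.

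The paper avoids this altogether by establishing a strictly weaker and, crucially, \emph{gap-anchored} statement: for $\lambda\notin\Sigma_{\rm Bohl}$, the set $\cM_\lambda=\{\xi:\Sigma_\xi\subset[-\infty,\lambda)\}\cup\{0\}$ is a linear subspace. The proof has three ingredients you should absorb. First, because the Lyapunov exponent $\chi_+(\xi)=\limsup_{t\to\infty}\tfrac1t\ln\|X(t)\xi\|$ is a limsup from the fixed initial time $0$, there is no cancellation in the denominator, and the ordinary triangle inequality gives $\chi_+(\alpha\xi+\beta\eta)<\lambda$ whenever $\chi_+(\xi),\chi_+(\eta)<\lambda$. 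Since $\chi_+(\zeta)\in\Sigma_\zeta$ (Remark~\ref{Remark1}(i), taking $s_n\equiv 0$), this shows $\Sigma_{\alpha\xi+\beta\eta}$ contains \emph{one} point in $[-\infty,\lambda)$. Second, $\lambda\notin\Sigma_{\rm Bohl}$ forces $\lambda\notin\Sigma_{\alpha\xi+\beta\eta}$. Third, $\Sigma_{\alpha\xi+\beta\eta}$ is an interval by Proposition~\ref{Lemma}(iii), so touching $[-\infty,\lambda)$ while avoiding $\lambda$ pins it entirely inside $[-\infty,\lambda)$. The gap condition and connectedness thus do the work your false subadditivity was meant to do. If you swap your subadditivity step for this Lyapunov-plus-gap argument, the rest of your structural reasoning (ordering of components, strictness of the chain, $k\le d$, boundedness of interior components) goes through and recovers the theorem.
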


\begin{proof}
  Let $\lambda\in\oR\setminus\Sigma_{\rm Bohl}$ be arbitrary. Due to Proposition~\ref{Lemma}~(iii), for any $\xi\in\R^d\setminus\{0\}$, either $\Sigma_{\xi}\subset [-\infty,\lambda)$ or $\Sigma_{\xi}\subset(\lambda,+\infty]$ holds. Define
  \begin{equation}\label{StableSpace}
  \mathcal M_{\lambda}:=
  \setb{
  \xi\in\R^d\setminus\{0\}: \Sigma_{\xi}\subset [-\infty,\lambda)  }\cup \set{0}
  \end{equation}
  and
  \[
  \mathcal N_{\lambda}:=
  \setb{
  \xi\in\R^d\setminus\{0\}: \Sigma_{\xi}\subset (\lambda,\infty]
  }\,.
  \]
  Obviously, $\mathcal M_{\lambda}\cup \mathcal N_{\lambda}=\R^d$, and we show now that $\mathcal M_{\lambda}$ is a linear subspace of $\R^d$. Consider $\xi,\eta\in \mathcal M_{\lambda}$ and $\alpha,\beta\in\R$ with
  $\alpha \xi+\beta \eta\not=0$. Since $\Sigma_{\xi},\Sigma_{\eta}\subset [-\infty,\lambda)$, it follows that
  \[
  \limsup_{t\to\infty}\frac{1}{t}\ln \|X(t)\xi\|<\lambda \qandq \limsup_{t\to\infty}\frac{1}{t}\ln \|X(t)\eta\|<\lambda\,,
  \]
  which implies that there exist $K>0$ and $\mu<\lambda$ such that
  \[
    \max\set{\|X(t)\xi\|, \|X(t)\eta\|} \leq K e^{\mu t}\fa t\geq 0\,.
  \]
  Consequently,
  \[
    \|X(t)(\alpha\xi+\beta\eta)\|
    \leq
    K (|\alpha|+|\beta|)
    e^{\mu t}\fa t\geq 0\,.
  \]
  Hence, there exists a sequence $\set{t_n}_{n\in\N}$ tending to infinity with
  \[
    \lim_{n\to\infty}\frac{1}{t_n}\ln\|X(t_n)(\alpha\xi+\beta\eta)\|\in [-\infty,\lambda)\,.
  \]
  Thus, $\Sigma_{\alpha\xi+\beta\eta}\cap [-\infty,\lambda)\not=\emptyset$, and since $\Sigma_{\alpha\xi+\beta\eta}$ is an interval that does not contain $\lambda$,
  it must be a subset of $[-\infty,\lambda)$, and thus,
  we have $\alpha\xi+\beta\eta\in \mathcal M_{\lambda}$.
  Hence, $\mathcal M_{\lambda}$ is a linear subspace of $\R^d$.

  Let $d_0<d_1<\dots<d_n$ be elements of the set $\{\dim(\mathcal M_{\lambda}): \lambda\in \oR\setminus\Sigma_{\rm Bohl}\}$. Depending on whether $\pm\infty\in \Sigma_{\rm Bohl}$ or not, we have the following estimate on the number $n$:
\begin{itemize}
\item[(a)] If $\pm \infty\in \Sigma_{\rm Bohl}$, then $d_0\geq 1$ and $d_n\leq d-1$ and therefore $n\leq d-2$.
\item[(b)] If $-\infty\in \Sigma_{\rm Bohl}$ and $+\infty\not\in\Sigma_{\rm Bohl}$, then $d_0\geq 1$ and therefore $n\leq d-1$.
\item[(c)] If $-\infty\not\in \Sigma_{\rm Bohl}$ and $+\infty\in\Sigma_{\rm Bohl}$, then $d_n\leq d-1$ and therefore $n\leq d-1$.
\item[(d)] If $-\infty\not\in \Sigma_{\rm Bohl}$ and $+\infty\not\in\Sigma_{\rm Bohl}$, then $n\leq d$.
 \end{itemize}
For $i\in \set{0,\dots,n}$, we define
  \begin{equation}\label{Eq8c}
    J_i:=\setb{\lambda\in\oR\setminus \Sigma_{\rm Bohl}: \dim (\mathcal M_{\lambda})=d_i}.
  \end{equation}
We now show that each set $J_i$ is an interval. Let $i\in\set{0,\dots,n}$ and $a<b$ be two elements of $J_i$. We show now that $[a,b]\subset J_i$. By \eqref{StableSpace}, we have $\mathcal M_{a}\subset \mathcal M_{b}$, and using $\dim(\mathcal M_{a})=\dim(\mathcal M_{b})$, this implies that $\mathcal M_{a}=\mathcal M_{b}$, and thus $\mathcal N_{a}=\mathcal N_{b}$. This means that $\R^d=\mathcal M_{a}\cup \mathcal N_b$. Let $\lambda\in [a,b]$ be arbitrary and $\xi\in\R^d\setminus\{0\}$. Thus, either $\xi\in \mathcal M_{a}$ or $\xi\in \mathcal N_b$. In both of these cases, we have $\lambda\not\in\Sigma_{\xi}$ and therefore $\lambda\in \oR\setminus \Sigma_{\rm Bohl}$. Now, we know that $\mathcal M_{\lambda}$ is a linear subspace and by \eqref{StableSpace} we have $\mathcal M_{a}\subset \mathcal M_{\lambda}\subset\mathcal M_{b}$. Thus, $\mathcal M_{a}=\mathcal M_{\lambda}=\mathcal M_{b}$ and therefore $\lambda\in J_i$. This means that we have proved that $J_i$ is an interval. Obviously, the order of the intervals $J_0,\dots,J_n$ is $J_0<J_1<\dots<J_n$ and we have
\[
\Sigma_{\rm Bohl}=\oR\setminus\bigcup_{i=0}^n J_i.
\]
Let $k$ denote the number of disjoint intervals $I_i$ of $\Sigma_{\rm Bohl}$. According to the cases (a)--(d) above, we have the following dependence of $k$ and $n$:
\begin{itemize}
\item[(i)] $k=n+2$ in case (a) above,
\item[(ii)] $k=n+1$ in case (b) and (c) above,
\item[(iii)] $k=n$ in case (d) above.
\end{itemize}
Thus, from the relation between $n$ and $d$ established above, we always obtain that $k\leq d$. To conclude the proof, for each $i\in\{1,\dots,k\}$, we define the set $\mathcal S_i$ as in \eqref{Eq8b} together with $\set{0}$. Note that the space $\cS_i$ coincides with $\cM_\lambda$ for $\lambda = \frac{1}{2} (\sup I_i + \inf I_{i+1})$, where $i\in\set{1,\dots,k-1}$, and $\cS_k=\cM_\lambda=\R^d$ for $\lambda> \sup I_k$. Then, clearly $\mathcal S_i$ is a linear subspace and satisfies \eqref{Eq8d}. This finishes the proof.
\end{proof}
Next, we concentrate on constructing an example of a nonautonomous differential equation such that its Bohl spectrum is not closed. Our construction is implicit by using a result from \cite{Barabanov_94_1}:

Let $\mathcal M_d$ denote the set of all piecewise continuous and uniformly bounded matrix-valued functions $A:\R_0^+\rightarrow \R^{d\times d}$. For each $A\in\mathcal M_d$, consider the linear nonautonomous differential equation
\begin{equation}\label{Eq8}
\dot x=A(t)x\,.
\end{equation}
Consider the \emph{uniform upper exponent function} of \eqref{Eq8}, $\overline\beta_A:\R^{d}\setminus \set{0}\rightarrow \R$, where $\overline\beta_A(\xi)$ is the upper Bohl exponent of the solution $X(t)\xi$ of \eqref{Eq8}. A complete description of the set of functions $\overline{\mathcal B}_d:=\{\overline\beta_A: A\in \mathcal M_d\}$ is given as follows (see \cite[Theorem 1]{Barabanov_94_1}).

\begin{theorem}\label{Theorem_Barabanov}
 A function $\beta:\R^d\setminus\{0\}\rightarrow \R$ belongs to the class $\overline{\mathcal B}_d$ if and only if it satisfies the following three conditions:
\begin{itemize}
\item [(i)] $\beta$ is bounded.
\item [(ii)] $\beta(\xi)=\beta(\alpha\xi)$ for any nonzero $\alpha\in\R$ and $\xi\in\R^d\setminus\{0\}$.
\item [(iii)] For any $q\in\R$, the set $\{\xi: \beta(\xi)\geq q\}$ is a $G_{\delta}$ set.
\end{itemize}
\end{theorem}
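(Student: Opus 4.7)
The theorem has two halves: necessity (if $\beta=\overline\beta_A$ then (i)--(iii)), which is essentially bookkeeping from Section~\ref{sec3}, and sufficiency, which is a serious construction. For necessity, fix $A\in\mathcal M_d$ with $\|A(t)\|\le M$. Then (i) is immediate from Proposition~\ref{Lemma}(iv), which gives $\Sigma_\xi\subset[-M,M]$ and hence $\beta(\xi)=\sup\Sigma_\xi\in[-M,M]$, and (ii) follows by applying Proposition~\ref{Lemma}(ii) to the supremum of $\Sigma_\xi$. For (iii), I would unfold the $\limsup_{t-s\to\infty}$ definition to write
\[
  \{\xi\in\R^d\setminus\{0\}:\beta(\xi)\ge q\}
  = \bigcap_{m=1}^{\infty}\bigcap_{N=1}^{\infty}\ \bigcup_{\substack{t,s\in\mathbb{Q}_{\ge 0}\\ t-s>N}}
  \bigl\{\xi:\tfrac{1}{t-s}\ln\tfrac{\|X(t)\xi\|}{\|X(s)\xi\|}>q-\tfrac{1}{m}\bigr\}.
\]
Restriction to rational $t,s$ is legitimate because $(t,s,\xi)\mapsto\|X(t)\xi\|/\|X(s)\xi\|$ is jointly continuous and $X(s)$ is invertible for every $s$. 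Each set in the union is open in $\xi$, so the inner union is open and the countable intersection is $G_\delta$.

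\textbf{Sufficiency.} Given $\beta$ satisfying (i)--(iii), my plan is to construct $A(t)$ over an increasing sequence of slabs $[T_k,T_{k+1}]$ with $T_k\to\infty$. On each slab I would take $A(t)=R(t)D(t)R(t)^\top-R(t)\dot R(t)^\top$, where $R(t)$ is a bounded, slow rotation whose column frame $e_1(t),\dots,e_d(t)$ sweeps through a countable dense subset of $\mS^{d-1}$, and $D(t)$ is diagonal with entries $d_i(t)$ tuned to approximate the target values $\beta(e_i(t))$. Slab lengths $T_{k+1}-T_k$ are chosen large relative to the rotation speed of $R(t)$ but small relative to the modulus of continuity of $\beta$ along the frame, so that for any $\xi$ the running quotients $\frac{1}{t-s}\ln\frac{\|X(t)\xi\|}{\|X(s)\xi\|}$ are forced to approach $\beta(\xi)$ in limit superior. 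Condition (iii) is exploited by enumerating the open sets $U_n^q$ in the representation $\{\beta\ge q\}=\bigcap_n U_n^q$ and arranging the slab-wise construction so that each $U_n^q$ is captured at rate $>q-1/m$ infinitely often.

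\textbf{Main obstacle.} The difficulty is two-sided matching. Achievability --- every $\xi$ eventually sees growth arbitrarily close to $\beta(\xi)$ --- is reached by density of the moving frame on $\mS^{d-1}$ and sufficiently long slabs, and uses (i) to keep $D(t)$ and hence $A(t)$ uniformly bounded. Optimality --- no $\xi$ should accidentally realize an exponent strictly greater than $\beta(\xi)$ --- is the subtle half, and precisely where condition (iii) enters: the $G_\delta$ representation of the superlevel sets permits a Baire-category style diagonal construction that prevents any $\xi$ with $\beta(\xi)<q$ from remaining aligned with high-growth frame directions long enough to realize rate $\ge q$. I would follow the inductive scheme of~\cite{Barabanov_94_1}, refining $A$ stage by stage over a countable enumeration of rational $q$, each time using the $G_\delta$ data to add a bounded correction that preserves the previously secured exponents while carving out the newly needed one.
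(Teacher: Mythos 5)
The paper does not prove this theorem; it states it and cites \cite[Theorem 1]{Barabanov_94_1}, so there is no internal proof to compare your attempt against. With that caveat, your necessity direction is correct and essentially complete: (i) follows from Proposition~\ref{Lemma}(iv), (ii) from Proposition~\ref{Lemma}(ii), and your $G_\delta$ representation
\[
  \{\xi:\beta(\xi)\ge q\}= \bigcap_{m\ge 1}\bigcap_{N\ge 1}\bigcup_{t,s\in\mathbb{Q}_{\ge 0},\, t-s>N}\bigl\{\xi:\tfrac{1}{t-s}\ln\tfrac{\|X(t)\xi\|}{\|X(s)\xi\|}>q-\tfrac{1}{m}\bigr\}
\]
is valid, since $X(s)$ is invertible (so $\xi\mapsto\|X(s)\xi\|$ never vanishes on $\R^d\setminus\{0\}$) and $X$ is continuous in time, which justifies both the openness of the inner sets and the restriction to rational endpoints.

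The sufficiency direction, however, is the actual content of the theorem, and what you have written is a plan rather than a proof. The two-sided matching problem you correctly flag — forcing $\overline\beta_A(\xi)\ge\beta(\xi)$ for \emph{every} $\xi$ while simultaneously preventing $\overline\beta_A(\xi)>\beta(\xi)$ for any $\xi$ — is exactly where all the work lies, and your sketch does not supply the estimates. Concretely: you never specify how the slab lengths, rotation speeds, and diagonal targets $d_i(t)\approx\beta(e_i(t))$ are to be balanced so that a generic $\xi$ (not aligned with the moving frame) realizes its own $\beta(\xi)$ as a $\limsup$ rather than inheriting the larger exponent of the fastest frame direction; you never make precise how the $G_\delta$ data is threaded into the induction so that the constraint $\overline\beta_A(\xi)\le\beta(\xi)$ is maintained across all refinement stages; and you eventually write ``I would follow the inductive scheme of~\cite{Barabanov_94_1},'' which is an appeal to the very source the paper cites rather than an argument. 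As a proposal this identifies the right architecture and the right role for hypothesis (iii), but as a proof it has a genuine gap: the entire construction and its verification, which cannot be replaced by a paragraph of intentions.
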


The following example shows that the intervals of the Bohl spectrum do not need to be closed.

\begin{example}\label{exam1}
  Consider the function $\beta:\R^2\setminus\{0\}\rightarrow \R$ defined by
  \begin{equation}\label{Eq9}
  \beta(r\cos\phi,r\sin\phi)=
  \left\{
    \begin{array}{ll}
      0 & \hbox{ : } \phi=0, \\
      |\cos\phi |& \hbox{ : } \phi\in (0,2\pi),
    \end{array}
  \right.
  \end{equation}
  where $r\in(0,\infty)$. Obviously, the function $\beta$ satisfies the three conditions of
  Theorem \ref{Theorem_Barabanov}. Consequently, there exists a piecewise continuous and uniformly
  bounded matrix-valued function $A:\R_0^+\rightarrow \R^{2\times 2}$
  such that $\overline{\beta}_A\equiv \beta$. By construction of $\beta_A$,
  it is easy to see that $[0,1)\subset \Sigma_{\rm Bohl}$. Suppose to the contrary that
  $\Sigma_{\rm Bohl}$ is closed. Thus, $1\in \Sigma_{\rm Bohl}$, which means there exists
  $\xi\in\R^2\setminus\{0\}$ such that $1\in \Sigma_{\xi}$. That leads to a contradiction,
  since $\overline\beta_A(\xi)<1$. Thus, $\Sigma_{\rm Bohl}$ is not closed.
\end{example}

In the remaining part of this section, we show that Bohl spectrum is preserved under a kinematic similarity transformation. Recall that a linear nonautonomous differential equation
\begin{equation}\label{Eq10}
  \dot x=A(t)x \fa t\in\R_0^+
\end{equation}
is said to be \emph{kinematically similar} to another linear nonautonomous differential equation
\begin{equation}\label{Eq11}
  \dot y=B(t)y \fa t\in\R_0^+
\end{equation}
if there exists a continuously differentiable function $S:\R^+_0\to\R^{d\times d}$ of invertible matrices such that both $S$ and $S^{-1}$ are bounded, and which satisfies the differential equation
\begin{equation}\label{Eq12}
\dot S(t)=A(t)S(t)-S(t)B(t) \fa t\in \R_0^+
\end{equation}
(see \cite[p.~38]{Coppel_78_1}).

\begin{proposition}[Invariance of the Bohl spectrum under kinematic similarity transformations]
  Suppose that \eqref{Eq10} and \eqref{Eq11} are kinematically similar. Then the Bohl spectra $\Sigma_{\rm Bohl}(A)$ and $\Sigma_{\rm Bohl}(B)$ of \eqref{Eq10} and \eqref{Eq11} coincide.
\end{proposition}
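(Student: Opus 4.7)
The plan is to transfer the Bohl spectrum solution by solution through the kinematic similarity, exploiting the fact that the transformation $S$ distorts norms only by bounded multiplicative constants.

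First I would express the fundamental matrices of the two systems in terms of each other. Let $X$ and $Y$ denote the fundamental matrices of \eqref{Eq10} and \eqref{Eq11} respectively. A direct differentiation using \eqref{Eq12} shows that
\[
\tfrac{\rmd}{\rmd t}\bigl(S(t)Y(t)S(0)^{-1}\bigr) = \bigl(A(t)S(t)-S(t)B(t)\bigr)Y(t)S(0)^{-1} + S(t)B(t)Y(t)S(0)^{-1} = A(t)\cdot S(t)Y(t)S(0)^{-1},
\]
and at $t=0$ this equals $\mathds{1}$. By uniqueness, $X(t)=S(t)Y(t)S(0)^{-1}$ for all $t\in\R_0^+$. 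Consequently, for any nonzero $\eta\in\R^d$, writing $\xi:=S(0)^{-1}\eta$ (which is nonzero since $S(0)$ is invertible) gives $X(t)\eta = S(t)Y(t)\xi$.

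Next, using that $S$ and $S^{-1}$ are bounded, there exist constants $c_1,c_2>0$ such that
\[
c_1 \|Y(t)\xi\| \;\le\; \|S(t)Y(t)\xi\| \;\le\; c_2 \|Y(t)\xi\| \fa t\in\R_0^+.
\]
Therefore, for all $t\ge s\ge 0$ with $Y(s)\xi\neq 0$,
\[
\biggl|\ln\frac{\|X(t)\eta\|}{\|X(s)\eta\|}-\ln\frac{\|Y(t)\xi\|}{\|Y(s)\xi\|}\biggr|\;\le\;\ln\frac{c_2}{c_1}.
\]
Dividing by $t-s$ and letting $t-s\to\infty$, the right-hand side tends to $0$, so the two logarithmic difference quotients have the same set of limit values. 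This means $\Sigma_\eta^A=\Sigma_\xi^B$, where I denote by $\Sigma_\eta^A$ and $\Sigma_\xi^B$ the Bohl spectra of the indicated solutions of the two systems.

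Finally, since $\eta\mapsto S(0)^{-1}\eta$ is a bijection of $\R^d\setminus\{0\}$, taking the union over all nonzero $\eta$ on one side corresponds to taking the union over all nonzero $\xi$ on the other side, so
\[
\Sigma_{\rm Bohl}(A)=\bigcup_{\eta\neq 0}\Sigma_\eta^A=\bigcup_{\xi\neq 0}\Sigma_\xi^B=\Sigma_{\rm Bohl}(B).
\]
There is no real obstacle: the only delicate point is making sure the identification of fundamental matrices is correct and that the bounded distortion from $S$ really washes out in the limit $t-s\to\infty$, both of which are immediate from the hypotheses.
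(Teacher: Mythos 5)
Your proof is correct and follows essentially the same route as the paper: both derive the relation between the fundamental matrices from \eqref{Eq12}, observe that the bounded similarity $S$ distorts $\ln$ of norms by at most an additive constant, and conclude that corresponding solutions (paired via the invertible matrix $S(0)$) have identical Bohl spectra. Your write-up is a bit more explicit about verifying the fundamental-matrix identity and about the bijection on initial conditions, but there is no substantive difference in method.
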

\begin{proof}
Let $X_A(t)$ and $X_B(t)$ denote the fundamental matrix solution of \eqref{Eq10} and \eqref{Eq11}, respectively. From \eqref{Eq12}, we derive
\[
X_B(t)=S(t)^{-1}X_A(t)S(0) \fa t\ge0\,,
\]
which implies that the Bohl spectrum of the solution $X_A(t)S(0)\xi$ of
\eqref{Eq10} is equal to
the Bohl spectrum of the solution $X_B(t)\xi$ for all $\xi\in\R^d\setminus\{0\}$,
where we use the inequality $\ln\|y \|-\ln\|S^{-1}(t)\|\le \ln\|S(t)y\|\le  \ln\|y\|+\ln\|S(t)\|$.
Since $S(0)$ is invertible it follows that $\Sigma_{\rm Bohl}(A)=\Sigma_{\rm Bohl}(B)$
and the proof is complete.
\end{proof}
\section{Bohl and Sacker--Sell spectrum}\label{sec5}

This section is devoted to the comparison of the Bohl spectrum with the Sacker--Sell spectrum.
Note that the one-dimensional example discussed in Remark~\ref{rem1} is an unbounded system for which the both spectra do not coincide, since the Bohl spectrum of this differential equation is given by $\set{-\infty}$.
It follows also directly from Example~\ref{exam1} that the Bohl spectrum does not always coincide with the Sacker--Sell spectrum, since the Sacker--Sell spectrum consists of closed intervals.
In this section, we give an explicit example of a bounded two-dimensional system for which the Sacker--Sell spectrum is a nontrivial interval
and the Bohl spectrum is a single point. We also show that the Bohl spectrum is always
a subset of the Sacker--Sell spectrum, and we provide sufficient conditions under which both spectra coincide.

\subsection{The Bohl spectrum can consist of one point, when the Sacker--Sell spectrum is a non-trivial interval}\label{subsec1}

Consider a $\delta > 0$ and an increasing sequence of non-negative numbers $\set{T_k}_{k\in\N_0}$ satisfying $T_0=0$ and the conditions
\begin{equation}\label{Example_01}
\lim_{k\to\infty} (T_{k+1}-T_k)=\infty\qandq
\lim_{k\to\infty} \frac{e^{T_{2k+2}-T_{2k+1}}}{T_{2k+1}-T_{2k}}=0\,.
\end{equation}
An example of such a sequence $\set{T_k}_{k\in\N}$ is $T_0=0$ and
\[
  T_{k+1}
  :=
  \left\{
  \begin{array}{ccl}
  T_k+ e^{k^2}& : & k \hbox{ is even}\,, \\
  T_k+ k & : & \hbox{otherwise}\,.
  \end{array}
  \right.
\]
Define a piecewise constant matrix-valued function $A:\R_0^+\rightarrow \R^{2\times 2}$ by
\begin{equation}\label{Example_02}
  A(t):=\left\{
  \begin{array}{ccl}
    A_1 &  : & T_{2k}\le t\le T_{2k+1}\,, \\
    A_2 & : &  T_{2k+1}\le t\le T_{2k+2}\,,
  \end{array}\right.
\end{equation}
where
\[
  A_1:=\left(\begin{matrix} -1& \delta\\ 0&-1\end{matrix}\right)\qandq
  A_2:=\left(\begin{matrix} -1& 0\\ 0&0\end{matrix}\right)\,.
\]
\begin{proposition}\label{Non-coincidence}
  Consider the bounded system
  \begin{equation}\label{Eq2}
  \dot x=A(t)x,
  \end{equation}
  where $A:\R_0^+\to\R^{2\times 2}$ is defined as in \eqref{Example_02}. Then the Bohl spectrum $\Sigma_{\rm Bohl}$ and the Sacker--Sell spectrum $\Sigma_{\rm SS}$ of this system are given by
  \[
  \Sigma_{\rm Bohl}=\{-1\} \qandq\Sigma_{\rm SS}=[-1,0]\,,
  \]
  respectively.
\end{proposition}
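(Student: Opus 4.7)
The plan is to handle the two spectra separately. For $\Sigma_{\rm SS}$, I would invoke Proposition~\ref{prop1}: the system is upper triangular with bounded off-diagonal entry $a_{12}(t)\in\{0,\delta\}$, so $\Sigma_{\rm SS}$ coincides with that of its diagonal part. The scalar equation $\dot x_1=-x_1$ has spectrum $\{-1\}$; for $\dot x_2=a_{22}(t)x_2$, where $a_{22}$ equals $-1$ on the $A_1$-blocks $[T_{2k},T_{2k+1}]$ and $0$ on the $A_2$-blocks $[T_{2k+1},T_{2k+2}]$, the representation \eqref{bohlrep} gives $\alpha_2=-1$ (realised by choosing $s,t$ inside long $A_1$-blocks) and $\beta_2=0$ (realised inside long $A_2$-blocks), both being available since the block lengths $\Delta_1^{(k)}:=T_{2k+1}-T_{2k}$ and $\Delta_2^{(k)}:=T_{2k+2}-T_{2k+1}$ tend to infinity by \eqref{Example_01}. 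Hence $\Sigma_{\rm SS}=[-1,0]$.

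For the Bohl spectrum I first compute the fundamental matrix via variation of parameters. With $\sigma(t):=\int_0^t\mathds{1}_{A_1}(u)\,\rmd u$, $\tau(t):=\int_0^t\mathds{1}_{A_2}(u)\,\rmd u$, $\tau_k:=\tau(T_{2k})=\sum_{j<k}\Delta_2^{(j)}$, and $I(t):=\int_0^t e^{\tau(u)}\mathds{1}_{A_1}(u)\,\rmd u$, one obtains
\[
  X(t)=\begin{pmatrix} e^{-t} & \delta e^{-t}I(t)\\ 0 & e^{-\sigma(t)}\end{pmatrix}.
\]
Since $X(t)e_1=e^{-t}e_1$, we already have $\Sigma_{e_1}=\{-1\}\subset\Sigma_{\rm Bohl}$. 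For any direction with nonzero second coordinate, Proposition~\ref{Lemma}(ii) reduces matters to $\xi=(\alpha,1)^\top$, for which $X(t)\xi=(e^{-t}J(t),e^{-\sigma(t)})^\top$ with $J(t):=\alpha+\delta I(t)$ non-decreasing and piecewise linear (constant on $A_2$-blocks, slope $\delta e^{\tau_k}$ on the $k$-th $A_1$-block). The crux is a dominance estimate: from $I(T_{2k})\ge e^{\tau_{k-1}}\Delta_1^{(k-1)}$ (just the $(k-1)$-st term of the sum defining $I$) combined with \eqref{Example_01}, one deduces $J(t)\ge e^{\tau(t)}$, equivalently $e^{-t}J(t)\ge e^{-\sigma(t)}$, for all $t\ge T_{2k^*}$ with $k^*$ chosen large enough (also so that $J(t)>0$ there). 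In the maximum norm this forces $\|X(t)\xi\|_\infty=e^{-t}J(t)$ for such $t$.

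By Proposition~\ref{Lemma}(i), I may restrict to $s\ge T_{2k^*}$, whence
\[
  \frac{1}{t-s}\ln\frac{\|X(t)\xi\|_\infty}{\|X(s)\xi\|_\infty}=-1+\frac{1}{t-s}\ln\frac{J(t)}{J(s)}.
\]
Monotonicity of $J$ gives the lower bound $\ge-1$; for the matching upper bound I estimate the logarithmic derivative of $J$. On the $A_1$-block of period $k$, $\dot J=\delta e^{\tau_k}$, while for $k$ large $J\ge\tfrac{\delta}{2}e^{\tau_{k-1}}\Delta_1^{(k-1)}$, yielding $\dot J/J\le 2 e^{\Delta_2^{(k-1)}}/\Delta_1^{(k-1)}\to 0$ by \eqref{Example_01}; on the $A_2$-blocks $\dot J\equiv 0$. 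Integrating gives $\ln(J(t)/J(s))\le\varepsilon_s(t-s)$ with $\varepsilon_s\to 0$ as $s\to\infty$, so $\Sigma_\xi=\{-1\}$ for every such $\xi$, and therefore $\Sigma_{\rm Bohl}=\{-1\}$. The main obstacle is the dominance step $J(t)\ge e^{\tau(t)}$: it genuinely requires the exponential strength of \eqref{Example_01} (the weaker $\Delta_2^{(k)}/\Delta_1^{(k)}\to 0$ would not suffice) and encodes the precise mechanism by which this example departs from the Sacker--Sell picture, namely that the $A_2$-blocks on which $x_2$ is frozen (and which drag the Sacker--Sell spectrum up to $0$) are just too short, in logarithmic terms, for the $x_1$-coordinate---which decays at rate $-1$ throughout---to lose its dominance, pinning the Bohl rate at $-1$.
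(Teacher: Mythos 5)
Your proof is correct, and it reaches the conclusion along a related but genuinely reorganised route. For $\Sigma_{\rm SS}$ you and the paper do the same thing (Proposition~\ref{prop1} plus the representation \eqref{bohlrep}). For $\Sigma_{\rm Bohl}$ the paper and you both hinge on showing that the first coordinate eventually dominates in the maximum norm, and both rely on the exponential strength of \eqref{Example_01}; but the paper gets there via the inductive sign argument of Lemma~\ref{Lemma1} together with the pointwise ratio bound \eqref{Eq3}, $|x(t)|/|y(t)|\ge\delta/\varepsilon$, which then enters directly into the block-by-block case analysis \eqref{Eq4}. You instead write out the fundamental matrix explicitly and track $J(t)=\alpha+\delta I(t)$: eventual positivity of $x$ becomes the triviality $I(t)\to\infty$ rather than an induction, dominance is packaged as $J\ge e^{\tau}$, and the final rate estimate is obtained by integrating the logarithmic derivative $\dot J/J\le 2e^{\Delta_2^{(k-1)}}/\Delta_1^{(k-1)}\to 0$ rather than by distinguishing the subintervals $[T_{2k},T_{2k+1}]$ and $[T_{2k+1},T_{2k+2}]$. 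Your version trades a bit of structure-free elementarity for a cleaner bookkeeping: the mechanism---that the $A_2$-blocks are logarithmically too short for the frozen $y$-coordinate to overtake the decaying $x$-coordinate---is the same, but the log-derivative formulation makes the ``$\varepsilon_s\to 0$ uniformly over later blocks'' step transparent, whereas the paper recovers it through the explicit inequalities \eqref{rel1}--\eqref{rel2}. One small point of care in your write-up: the dominance $J(t)\ge e^{\tau(t)}$ must be checked at the right endpoints of the $A_2$-blocks (where $\tau$ finishes growing while $J$ is stalled), which you implicitly do via $J(T_{2k+1})\ge\delta e^{\tau_k}\Delta_1^{(k)}\ge e^{\tau_k+\Delta_2^{(k)}}$; stating this explicitly would close the only place a reader might pause.
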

Before proving the above proposition, we need the following lemma.
\begin{lemma}\label{Lemma1}
  Let $t\mapsto(x(t),y(t))$ be an arbitrary nonzero solution of \eqref{Eq2} with $y(0)\not=0$. Then
  there exists $T>0$ such that $x(t)$ and $y(t)$ have the same sign for all $t\ge T$\,.
\end{lemma}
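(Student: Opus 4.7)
The plan is to exploit the piecewise-constant structure of $A(t)$ and solve the equation explicitly on each constant piece. Setting $\tau_k := T_{2k+1}-T_{2k}$ and $\sigma_k := T_{2k+2}-T_{2k+1}$, on each $A_1$-interval $[T_{2k},T_{2k+1}]$ the solution satisfies
\[
y(t)=e^{-(t-T_{2k})}y(T_{2k}),\qquad
x(t)=e^{-(t-T_{2k})}\bigl[x(T_{2k})+\delta(t-T_{2k})y(T_{2k})\bigr],
\]
while on each $A_2$-interval $[T_{2k+1},T_{2k+2}]$ the component $y$ is constant and $x$ decays multiplicatively. In particular $y$ never changes sign, so I may assume without loss of generality that $y(0)>0$, whence $y(t)>0$ for all $t\ge 0$, and the task reduces to showing that $x(t)$ is eventually strictly positive.

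The central structural observation is a trapping principle: if $x(t_0)>0$ and $y(t_0)>0$ at some time $t_0$, then the explicit formulas above (together with $\delta>0$) yield $x(t)>0$ for every $t\ge t_0$, across both phases and phase boundaries. Consequently it suffices to produce a single time at which $x$ becomes strictly positive. The natural candidates are the right endpoints of the $A_1$-phases, where
\[
x(T_{2k+1})=e^{-\tau_k}y(T_{2k})\bigl[u_k+\delta\tau_k\bigr],\qquad u_k := x(T_{2k})/y(T_{2k}),
\]
so positivity at $T_{2k+1}$ is equivalent to $u_k+\delta\tau_k>0$. Concatenating the formulas across successive phases yields the one-dimensional recursion $u_{k+1}=e^{-\sigma_k}(u_k+\delta\tau_k)$.

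The heart of the argument is then a contradiction argument ruling out the case that $u_k+\delta\tau_k\le 0$ for every $k$. In that case, $u_k \le -\delta\tau_k<0$ for every $k$, and the recursion together with $|u_k+\delta\tau_k|=|u_k|-\delta\tau_k$ gives
\[
|u_{k+1}|=e^{-\sigma_k}\bigl(|u_k|-\delta\tau_k\bigr)\le e^{-\sigma_k}|u_k|,
\]
hence $|u_k|\le |u_0|\exp\bigl(-\sum_{j=0}^{k-1}\sigma_j\bigr)\to 0$ because $\sigma_j\to\infty$ by~\eqref{Example_01}. But simultaneously $|u_k|\ge\delta\tau_k\to\infty$, a contradiction. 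Therefore some $k$ satisfies $u_k+\delta\tau_k>0$, and taking $T:=T_{2k+1}$ completes the proof.

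The main technical point to handle carefully is the trapping observation: one must verify that strict positivity at an endpoint $T_{2k+1}$ is genuinely preserved across the subsequent $A_2$-phase (immediate from multiplicative decay) and the following $A_1$-phase (immediate from the explicit formula, since $x$ and $y$ at its left endpoint are both positive and $\delta>0$). Beyond this, I would note that only the first condition of~\eqref{Example_01}, $T_{k+1}-T_k\to\infty$, is used here; the stronger quantitative condition $e^{\sigma_k}/\tau_k\to 0$ presumably enters only in the subsequent computation of the Sacker--Sell spectrum, not in locating the eventual sign of $x$.
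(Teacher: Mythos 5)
Your proof is correct, and it takes a recognizably different route from the paper's, though in the same elementary spirit. The paper fixes $y(0)=1$ and establishes by induction the explicit lower bound $x(T_{2k+1})\ge e^{-T_{2k+1}}\bigl(x(0)+\delta\sum_{\ell=0}^{k}(T_{2\ell+1}-T_{2\ell})\bigr)$, using $e^{t}y(t)\ge1$ along the way, and then observes that the bracket eventually becomes nonnegative because $T_{2\ell+1}-T_{2\ell}\to\infty$; positive invariance of the first quadrant then finishes the argument. You instead factor out the common exponential decay shared by both components by passing to the ratio $u_k=x(T_{2k})/y(T_{2k})$, derive the clean scalar recursion $u_{k+1}=e^{-\sigma_k}(u_k+\delta\tau_k)$, and rule out $u_k+\delta\tau_k\le0$ for all $k$ by a bounded-versus-unbounded contradiction ($|u_{k+1}|\le|u_k|$ yet $|u_k|\ge\delta\tau_k\to\infty$); note you do not even need $\sum\sigma_j=\infty$, monotonicity of $|u_k|$ alone suffices. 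Both proofs rest on the same trapping observation (positive invariance of the first quadrant) and both use only the first half of condition~\eqref{Example_01}, as you rightly point out. Your reduction to a one-dimensional recursion makes the mechanism of the sign flip more transparent than the paper's direct induction on $x(T_{2k+1})$ with the $e^{-t}$ prefactor carried along, while the paper's bound has the small virtue of being quantitative about when the sign change must occur.
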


\begin{proof} The flows for the autonomous systems $\dot x=A_1x$ and $\dot x=A_2x$ are given by
\[
e^{A_1 t}=e^{-t}\left(\begin{matrix} 1& \delta t\\ 0&1\end{matrix}\right) \qandq
e^{A_2 t}=\left(\begin{matrix} e^{-t}& 0\\ 0&1\end{matrix}\right)\,,
\]
respectively. First suppose that $y(0)>0$, and without loss of generality assume that $y(0)=1$. We show by induction that
\begin{equation}\label{rel3}
  x(T_{2k+1}) \ge e^{-T_{2k+1}}\left(x(0)+\delta\sum_{\ell=0}^k (T_{2\ell +1}- T_{2\ell})\right) \fa k\in\N_0\,.
\end{equation}
This is clearly true for $k=0$, since we have $x(T_1)= e^{-T_1}\big(x(0)+\delta T_1\big)$. We now assume that \eqref{rel3} is true for a fixed $k\in\N_0$, and we prove \eqref{rel3} for $k+1$. This follows from
\begin{align*}
  &\quad\,\, x(T_{2k+3}) \\
  &= e^{-(T_{2k+3}-T_{2k+2})}\big(x(T_{2k+2})+\delta(T_{2k+3}-T_{2k+2})y(T_{2k+2})\big)\\
  & = e^{-(T_{2k+3}-T_{2k+1})}x(T_{2k+1}) +e^{-(T_{2k+3}-T_{2k+2})}\delta(T_{2k+3}-T_{2k+2})y(T_{2k+2})\\
  &\ge e^{-T_{2k+3}}\Big(\big(x(0)+\delta\textstyle \sum_{\ell=0}^k (T_{2\ell +1}- T_{2\ell})\big) \\
  & \quad +e^{T_{2k+2}}\delta(T_{2k+3}-T_{2k+2})y(T_{2k+2})\Big)\\
  & \ge e^{-T_{2k+3}}\big(x(0)+\delta\textstyle \sum_{\ell=0}^{k+1} (T_{2\ell +1}- T_{2\ell})\big)\,,
\end{align*}
where the last inequality follows from $e^{-t} y(t) \ge 1$ for all $t\ge 0$.

It follows from \eqref{rel3} that there exists a $k\in\N$ with $x(T_{2k+1})\ge 0$, and we also have $y(T_{2k+1})>0$.
Since the first quadrant is positively invariant under both systems,
it follows that $x(t)\ge 0$ and $y(t)\ge 0$ for $t\ge T_{2k+1}$.
If $y(0)<0$, we get the required result by considering the solution $-(x(t),y(t))$.
\end{proof}
\begin{proof}[Proof of Proposition \ref{Non-coincidence}]
  We first show that $\Sigma_{\rm SS}=[0,1]$. Due to Proposition~\ref{prop1}, the Sacker--Sell spectrum of this upper triangular system coincides with that for the diagonal system diag$(a(t),b(t))$, where $a(t) = -1$ and $-1 \le b(t) \le 0$. $\dot x=a(t)x$ has Sacker--Sell spectrum $\set{-1}$, and the Sacker--Sell spectrum of $\dot y=b(t)y$ is contained in $[-1,0]$. However,
  \[\frac{1}{T_{2k}-T_{2k-1}}\int^{T_{2k}}_{T_{2k-1}}b(t)\,\rmd t
  \to 0\quad{\rm and}\quad
  \frac{1}{T_{2k+1}-T_{2k}}\int^{T_{2k+1}}_{T_{2k}}b(t)\,\rmd t
  \to -1\,,\]
  so the Sacker--Sell spectrum of $\dot y=b(t)y$ is exactly $[-1,0]$, and thus, the Sacker--Sell spectrum of the whole system is $[-1,0]$.

  We now show that $\Sigma_{\rm Bohl}=\{-1\}$ by proving that for fixed
  $\xi\in\R^2\setminus\{0\}$, we have $\Sigma_{\xi}=\{-1\}$.
  For this purpose, write $(x(t),y(t))=X(t)\xi$ and let $\eps\in (0,\delta)$ be arbitrary.
  By \eqref{Example_01} and Lemma \ref{Lemma1}, assuming without loss of generality that $y(0)\neq 0$, there exists a $K\in\N$ such that $x(t)$
  and $y(t)$ have the same sign for $t\ge T_{2K}$ and
    \begin{equation}\label{rel2}
      \frac{e^{T_{2k+2}-T_{2k+1}}}{T_{2k+1}-T_{2k}}\le\eps \fa k\geq K\,.
    \end{equation}
    We show that
    \begin{equation}\label{Eq3}
      \frac{|x(t)|}{|y(t)|}
      \geq
      \frac{\delta}{\eps} >1 \fa t\geq T_{2K+2}\,.
    \end{equation}
  Firstly, since
  \[ X(T_{2k+1})X^{-1}(T_{2k})
  =e^{-(T_{2k+1}-T_{2k})}
  \left(\begin{matrix}
  1 & \delta(T_{2k+1}-T_{2k})\\
  0 & 1
  \end{matrix}\right) \fa k\ge K\,,\]
  it follows that
  \begin{equation}\label{rel1}
  \frac{|x(T_{2k+1})|}{|y(T_{2k+1})|}
  =\frac{|x(T_{2k})+\delta(T_{2k+1}-T_{2k})y(T_{2k})|}{|y(T_{2k})|}
  \ge \delta(T_{2k+1}-T_{2k})\,.
  \end{equation}
  Then for $t\in [T_{2k+2},T_{2k+3}]$ and $k\ge K$, we have
  \begin{align*}
    \frac{|x(t)|}{|y(t)|}
  &= \frac{|x(T_{2k+2})+\delta(t-T_{2k+2})y(T_{2k+2})|}{|y(T_{2k+2})|}
  \ge \frac{|x(T_{2k+2})|}{|y(T_{2k+2})|}=\\
  &=  e^{-(T_{2k+2}-T_{2k+1})} \frac{|x(T_{2k+1})|}{|y(T_{2k+1})|}\stackrel{\eqref{rel1}}{\ge} e^{-(T_{2k+2}-T_{2k+1})}\delta(T_{2k+1}-T_{2k})\stackrel{\eqref{rel2}}{\ge} \frac{\delta}{\varepsilon}\,.
  \end{align*}
  Next for $t\in [T_{2k+3},T_{2k+4}]$ and $k\ge K$, we have,
  using the inequality \eqref{rel1}, that
  \[ \frac{|x(t)|}{|y(t)|}
  = e^{-(t-T_{2k+3})}\frac{|x(T_{2k+3})|}{|y(T_{2k+3})|}
  \ge \delta(T_{2k+3}-T_{2k+2})e^{-(T_{2k+4}-T_{2k+3})}
  \ge \frac{\delta}{\varepsilon},\]
  which shows \eqref{Eq3}.

  To conclude the proof, we use the maximum norm in $\R^2$. With respect to this norm,
  $\|X(t)\xi\|=|x(t)|$, whenever $t\ge T_{2K+2}$. Our aim is to show that
  \begin{equation}\label{Eq4}
  e^{-(t-s)}
  \leq
  \frac{\|X(t)\xi\|}{\|X(s)\xi\|}
  \leq
  e^{(-1+\varepsilon)(t-s)} \fa t\geq s\geq T_{2K+2}\,.
  \end{equation}
  Equivalently, we prove \eqref{Eq4} for $k\geq K+1$ and for $t,s\in [T_{2k},T_{2k+2}]$ such that $t\geq s$. For
  $T_{2k}\le s\le t\le T_{2k+1}$, we have
  \begin{align*}
  \frac{\|X(t)\xi\|}{\|X(s)\xi\|}
  &=\frac{|x(t)|}{|x(s)|}
  =e^{-(t-s)}\frac{x(T_{2k})+\delta(t-T_{2k})y(T_{2k})}{x(T_{2k})+\delta(s-T_{2k})y(T_{2k})}\\
  &=e^{-(t-s)}\frac{1+\delta(t-T_{2k})y(T_{2k})/x(T_{2k})}{1+\delta(s-T_{2k})y(T_{2k})/x(T_{2k})}\\
  &=e^{-(t-s)}\left(1+\frac{\delta(t-s)y(T_{2k})/x(T_{2k})}
  {1+\delta(s-T_{2k})y(T_{2k})|/x(T_{2k})}\right)\,,
  \end{align*}
  which together with \eqref{Eq3} implies that
  \[
  e^{-(t-s)}\le
  \frac{\|X(t)\xi\|}{\|X(s)\xi\|}
  \leq
  e^{-(t-s)}(1+\eps(t-s))
  \leq
  e^{(-1+\varepsilon)(t-s)}\,.
  \]
  If $T_{2k+1}\le s\le t\le T_{2k+2}$, then
  \[
  \frac{\|X(t)\xi\|}{\|X(s)\xi\|}
  =
  \frac{|x(t)|}{|x(s)|} \le e^{-(t-s)}.
  \]
  This means that \eqref{Eq4} is proved.
  Consequently, $\Sigma_{\xi}\subset [-1,-1+\eps]$. Letting $\eps\to 0$ leads to $\Sigma_{\xi}=\{-1\}$ and finishes the proof of this proposition.
\end{proof}

\subsection{Coincidence of the Bohl and Sacker--Sell spectrum in special cases}

We first show that the Bohl spectrum is a subset of the Sacker--Sell spectrum.
We then show that the two spectra coincide when the Sacker--Sell spectral intervals are
singletons. Finally, we show that the Bohl and Sacker--Sell spectra coincide for bounded
diagonalizable, and hence, bounded integrally separated systems.

Let $\xi,\eta\in\R^d\setminus\{0\}$. Then the two solutions $X(t)\xi$ and $X(t)\eta$ of \eqref{Eq1} are said to be \emph{integrally separated}
if there exists $K\ge 1$ and $\alpha>0$ such that
\begin{equation}\label{Eq14}
\frac{\|X(t)\xi\|}{\|X(s)\xi\|}
\ge
Ke^{\alpha(t-s)}
\frac{\|X(t)\eta\|}{\|X(s)\eta\|}
\fa t\ge s\ge 0
\end{equation}
(see e.g.~\cite[Definition 5.3.1]{Adrianova_95_1}). If $A(t)$ is bounded, then the angle between two such
solutions is bounded below by a positive number.

In the next lemma, we show that when the solutions are integrally separated and $X(t)\xi$ is the bigger solution in the above sense, then the Bohl spectrum of any non-trivial linear combination of $X(t)\xi$ and $X(t)\eta$ is always given by $\Sigma_\xi$.

\begin{lemma}\label{Lemma4}
  Consider $\xi,\eta\in\R^d\setminus\{0\}$ such that the two solutions $X(t)\xi$
  and $X(t)\eta$ of \eqref{Eq1} are integrally separated, i.e.~the inequality \eqref{Eq14} holds. Then
  \begin{equation}\label{Eq15}
    \Sigma_{\lambda \xi+\mu \eta}= \Sigma_{\xi}\fa \lambda\in\R\setminus\{0\}\mand  \mu\in\R\,.
  \end{equation}
\end{lemma}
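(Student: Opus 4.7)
The plan is to show that under the integral separation hypothesis, the solution $X(t)\eta$ is asymptotically negligible compared to $X(t)\xi$, so that any non-trivial linear combination $z(t):=\lambda X(t)\xi+\mu X(t)\eta$ (with $\lambda\ne 0$) is, up to multiplicative constants, comparable to $\|X(t)\xi\|$ on a tail $[T,\infty)$. Together with Proposition~\ref{Lemma}(i), which allows us to restrict the sequences defining the Bohl spectrum to have $s_n\to\infty$, this immediately yields \eqref{Eq15}.

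More concretely, first I would set $s=0$ in the integral separation inequality \eqref{Eq14}, obtaining
\[
  \|X(t)\eta\|\le \frac{\|\eta\|}{K\|\xi\|}\,e^{-\alpha t}\,\|X(t)\xi\| \fa t\ge 0\,.
\]
Then for $z(t)=\lambda X(t)\xi+\mu X(t)\eta$ (with $\lambda\ne 0$), the reverse triangle inequality gives
\[
  |\lambda|\|X(t)\xi\|\opintB{1-\tfrac{|\mu|\|\eta\|}{|\lambda|K\|\xi\|}e^{-\alpha t}}
  \le \|z(t)\|
  \le |\lambda|\|X(t)\xi\|\opintB{1+\tfrac{|\mu|\|\eta\|}{|\lambda|K\|\xi\|}e^{-\alpha t}}\,.
\]
Choosing $T>0$ so large that the bracketed quantity in the lower bound exceeds $\tfrac12$, I obtain constants $0<c_1<c_2$ with
\[
  c_1\|X(t)\xi\|\le \|z(t)\|\le c_2\|X(t)\xi\| \fa t\ge T\,.
\]

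From this sandwich estimate it follows that for all $t\ge s\ge T$,
\[
  \abvalB{\frac{1}{t-s}\ln\frac{\|z(t)\|}{\|z(s)\|}-\frac{1}{t-s}\ln\frac{\|X(t)\xi\|}{\|X(s)\xi\|}}\le \frac{\ln(c_2/c_1)}{t-s}\,,
\]
so the right-hand side vanishes as $t-s\to\infty$. Invoking Proposition~\ref{Lemma}(i), both $\Sigma_{z(0)}$ and $\Sigma_\xi$ may be computed using sequences $\{t_n\},\{s_n\}$ with $s_n\to\infty$ (and hence $s_n\ge T$ eventually), which shows that a limit value $\lambda$ is attainable along such sequences for $z$ if and only if it is attainable for $X(\cdot)\xi$. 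Thus $\Sigma_{\lambda\xi+\mu\eta}=\Sigma_{z(0)}=\Sigma_\xi$.

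There is no serious obstacle here; the only subtle point is that one cannot directly insist $s\ge T$ in the original definition of the Bohl spectrum, which is precisely why Proposition~\ref{Lemma}(i) is essential. The rest is just the asymptotic dominance of $X(\cdot)\xi$ over $X(\cdot)\eta$ extracted from \eqref{Eq14}.
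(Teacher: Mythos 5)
Your proof is correct and follows essentially the same route as the paper's: set $s=0$ in \eqref{Eq14} to see that $\|X(t)\eta\|$ is eventually a small fraction of $\|X(t)\xi\|$, deduce a two-sided comparison $c_1\|X(t)\xi\|\le\|z(t)\|\le c_2\|X(t)\xi\|$ on a tail, and then invoke Proposition~\ref{Lemma}(i) to conclude that the Bohl spectra agree. The only cosmetic difference is that the paper first reduces to $\lambda=\mu=1$ (via Proposition~\ref{Lemma}(ii) and rescaling of $\xi,\eta$), whereas you carry the general coefficients through directly; both are equally valid.
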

\begin{proof}
The lemma is clear for $\mu=0$. For the rest, we may prove \eqref{Eq15}
for the case that $\lambda=\mu=1$.  By taking $s=0$ in \eqref{Eq14}, there exists $T>0$ such that
\begin{equation*}
  \frac{\|X(t)\eta\|}{\|X(t)\xi\|}\leq \frac{1}{2}\fa t\geq T\,.
\end{equation*}
Thus, for all $t\geq T$, we have
\begin{equation}\label{Eq16}
  \frac{1}{2}\leq
  1-\frac{\|X(t)\eta\|}{\|X(t)\xi\|}
  \leq
  \frac{\|X(t)(\xi+\eta)\|}{\|X(t)\xi\|}
  \leq
  1+\frac{\|X(t)\eta\|}{\|X(t)\xi\|}\leq \frac{3}{2}\,.
\end{equation}
By \eqref{Eq16}, for all $t\geq s\geq T$, we have
\[
  \frac{\|X(t)(\xi+\eta)\|}{\|X(s)(\xi+\eta)\|}
  = \frac{\|X(t)(\xi+\eta)\|}{\|X(t)\xi\|}\frac{\|X(t)\xi\|}{\|X(s)\xi\|}\frac{\|X(s)\xi\|}{\|X(s)(\xi+\eta)\|}\le 3\frac{\|X(t)\xi\|}{\|X(s)\xi\|}\,,
\]
which implies that
\begin{equation}\label{1}
\frac{1}{t-s}\ln\frac{\|X(t)(\xi+\eta)\|}{\|X(s)(\xi+\eta)\|}\le \frac{\ln 3}{t-s}+\frac{1}{t-s}\ln\frac{\|X(t)\xi\|}{\|X(s)\xi\|}\,.
\end{equation}
Conversely, for all $t\ge s\ge T$, we have
\[
\frac{\|X(t)(\xi+\eta)\|}{\|X(s)(\xi+\eta)\|}
=
\frac{\|X(t)(\xi+\eta)\|}{\|X(t)\xi\|}\frac{\|X(t)\xi\|}{\|X(s)\xi\|}\frac{\|X(s)\xi\|}{\|X(s)(\xi+\eta)\|}\ge \frac{1}{3}\frac{\|X(t)\xi\|}{\|X(s)\xi\|}\,,
\]
so that
\begin{equation}\label{2}
 \frac{1}{t-s}\ln\frac{\|X(t)(\xi+\eta)\|}{\|X(s)(\xi+\eta)\|}\ge -\frac{\ln 3}{t-s}+\frac{1}{t-s}\ln\frac{\|X(t)\xi\|}{\|X(s)\xi\|}\,.
 \end{equation}
Let $\set{t_n}_{n\in\N}$ and $\set{s_n}_{n\in\N}$ be two positive sequences with $\lim_{n\to\infty} (t_n-s_n)=\infty$ and $\lim_{n\to\infty}s_n=\infty$. Since $\lim_{n\to\infty}t_n=\lim_{n\to\infty}s_n=\infty$, there exists $N\in\N$ such that $t_n\geq s_n\geq T$ for all $n\geq N$. Hence, combining \eqref{1} and \eqref{2} yields
\[
\lim_{n\to\infty}\frac{1}{t_n-s_n}\ln\frac{\|X(t_n)(\xi+\eta)\|}{\|X(s_n)(\xi+\eta)\|}
=
\lim_{n\to\infty}\frac{1}{t_n-s_n}\ln\frac{\|X(t_n)\xi\|}{\|X(s_n)\xi\|}\,,
\]
whenever one of the two above limits exists. This fact, together with Lemma~\ref{Lemma}~(i), shows that $\Sigma_{\xi+\eta}=\Sigma_{\xi}$.
This concludes the proof of this lemma.
\end{proof}

We first use this lemma to show that the Bohl spectrum is a subset of the Sacker--Sell spectrum. As a consequence, the filtration corresponding to the Bohl spectrum is finer than the filtration corresponding to Sacker--Sell spectrum.

\begin{theorem} \label{ken1}
  Consider the Bohl spectrum $\Sigma_{\rm Bohl}$ and the Sacker--Sell spectrum $\Sigma_{\rm SS}$ of a linear nonautonomous differential equation \eqref{Eq1}. The following statements hold:
  \begin{itemize}
  \item [(i)] The Bohl spectrum is a subset of the Sacker--Sell spectrum.
  \item [(ii)] The filtration associated with the Bohl spectrum is finer than the one of Sacker--Sell spectrum.
  \end{itemize}
\end{theorem}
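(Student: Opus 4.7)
The plan is to reduce (i) to the exponential dichotomy machinery and then deduce (ii) from a comparison of the dynamical characterizations in Theorems~\ref{theo3} and~\ref{theo2}.

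\textbf{Part (i).} I would argue the contrapositive: if $\gamma\in\oR\setminus\Sigma_{\rm SS}$, then $\gamma\notin\Sigma_{\rm Bohl}$. Such a $\gamma$ admits an exponential dichotomy with projector $P$ and constants $K\ge 1$, $\alpha>0$. For any $\xi\in\R^d\setminus\{0\}$, decompose $\xi=\xi_s+\xi_u$ with $\xi_s:=P\xi$ and $\xi_u:=(\mathds{1}-P)\xi$. Using invariance of the stable and unstable subspaces under the flow, the dichotomy estimates translate into
\[
  \frac{\|X(t)\xi_s\|}{\|X(s)\xi_s\|}\le Ke^{(\gamma-\alpha)(t-s)}
  \qandq
  \frac{\|X(t)\xi_u\|}{\|X(s)\xi_u\|}\ge \tfrac{1}{K}e^{(\gamma+\alpha)(t-s)}
\]
for all $0\le s\le t$ whenever the respective summand is nonzero. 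If $\xi_u=0$, then $\Sigma_{\xi}\subset[-\infty,\gamma-\alpha]$; if $\xi_s=0$, then $\Sigma_{\xi}\subset[\gamma+\alpha,+\infty]$; in either case $\gamma\notin\Sigma_{\xi}$. When both components are nonzero, dividing the two displayed inequalities shows that $X(\cdot)\xi_u$ and $X(\cdot)\xi_s$ are integrally separated in the sense of \eqref{Eq14} with exponential rate $2\alpha$. Lemma~\ref{Lemma4} then yields $\Sigma_{\xi}=\Sigma_{\xi_u+\xi_s}=\Sigma_{\xi_u}\subset[\gamma+\alpha,+\infty]$, so $\gamma\notin\Sigma_{\xi}$ in this case as well. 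Taking the union over $\xi$ gives $\gamma\notin\Sigma_{\rm Bohl}$, which proves (i).

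\textbf{Part (ii).} Now (i) implies that each Sacker--Sell gap $(b_i,a_{i+1})$, being a connected subset of $\oR\setminus\Sigma_{\rm Bohl}$, is contained in a single Bohl gap $J_j$ from the proof of Theorem~\ref{theo2}. Pick $\gamma\in(b_i,a_{i+1})$. The dynamical characterization in Theorem~\ref{theo3} identifies $\mathcal W_i$ with the pseudo-stable space $\mathrm{Range}(P)$ of the dichotomy at rate $\gamma$. I would then verify that $\mathrm{Range}(P)=\mathcal M_{\gamma}$: the stable estimate gives $\Sigma_{\xi}\subset[-\infty,\gamma-\alpha]\subset[-\infty,\gamma)$ for $\xi\in\mathrm{Range}(P)$, whereas any $\xi$ with a nonzero unstable component satisfies $\Sigma_{\xi}\subset[\gamma+\alpha,+\infty]$ by the analysis in Part~(i) and hence lies outside $\mathcal M_{\gamma}$. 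Since $\mathcal M_{\gamma}=\mathcal S_j$ by the proof of Theorem~\ref{theo2}, we conclude $\mathcal W_i=\mathcal S_j$, so each subspace in the Sacker--Sell filtration appears in the Bohl filtration.

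\textbf{Main obstacle.} The only real technical point is matching the integral separation produced by the exponential dichotomy to the precise hypothesis of Lemma~\ref{Lemma4}. The constant obtained from dividing the two dichotomy inequalities is $1/K^2$, which may fall below the $K\ge 1$ convention in \eqref{Eq14}. However, the proof of Lemma~\ref{Lemma4} uses integral separation only to deduce that $\|X(t)\xi_s\|/\|X(t)\xi_u\|\to 0$ as $t\to\infty$, and this is already forced by the positive exponential rate $2\alpha$; so the nominal restriction on $K$ presents no genuine obstruction.
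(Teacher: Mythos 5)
Your proof is correct and follows essentially the same route as the paper: for (i) you decompose along the dichotomy projector, read the one-sided Bohl estimates off the dichotomy, and treat the mixed case via Lemma~\ref{Lemma4}, while for (ii) you identify $\mathcal W_i$ with $\mathcal M_\gamma$ (and hence with a Bohl filtration space) for $\gamma$ in the spectral gap. The $1/K^2$ issue you flag is a genuine but harmless slip in the paper's convention for \eqref{Eq14} (the constant need only be positive, as in Adrianova's definition), and, as you note, it does not affect how Lemma~\ref{Lemma4} is used.
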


\begin{proof}
(i) Let $\lambda\in\R\setminus \Sigma_{\rm SS}$ be arbitrary. Then $\dot x = A(t)x$ has an
exponential dichotomy with growth rate $\lambda$, which means that there exists a projector $P\in\R^{d\times d}$ such that
\begin{equation}\label{Eq18}
\|X(t)PX^{-1}(s)\|\le Ke^{(\lambda-\alpha)(t-s)}\fa 0\le s\le t
\end{equation}
and
\begin{equation}\label{Eq19}
\|X(t)(\mathds{1}-P)X^{-1}(s)\|
\le
Ke^{-(\lambda+\alpha)(s-t)}\fa 0\le t\le s\,.
\end{equation}
Then for any $\xi\in \ker P\setminus\{0\}$ and $\eta\in \hbox{\rm im}\, P\setminus\{0\}$, the solutions $X(t)\xi$ and $X(t)\eta$ are integrally separated. So, by virtue of Lemma \ref{Lemma4}, we have
\[
\Sigma_{\rm Bohl}
=
\bigcup_{\xi\in \ker P\setminus\{0\}}\Sigma_{\xi}
\cup
\bigcup_{\eta\in \operatorname{im}\, P\setminus\{0\}}\Sigma_{\eta}\,.
\]
From \eqref{Eq18}, we derive that for all $\eta \in \hbox{\rm im}\, P\setminus\{0\}$,
\[
\|X(t)\eta\|=\|X(t)P\eta\|\le Ke^{(\lambda-\alpha)(t-s)}\|X(s)\eta\|\,,
\]
which implies that $\Sigma_{\eta}\subset (-\infty,\lambda-\alpha]$. Similarly, using \eqref{Eq19}, we obtain that
if $\xi$ is in the kernel of $P$, then $\Sigma_{\xi}\subset [\lambda+\alpha,\infty)$. Thus, $\lambda\not\in \Sigma_{\rm Bohl}$, which finishes the proof of (i).

(ii) Let $I_j$ be the rightmost component of the Bohl spectrum contained in Sacker--Sell spectral interval $[a_i,b_i]$, and let $\mathcal S_j$ be the subspace
in the Bohl filtration corresponding to the union of $I_j$ and the intervals to its left. Then $\mathcal S_j=\cM_{\lambda}=\setb{\xi \in\R^d: \Sigma_{\xi}\subset [-\infty,\lambda)}$ for
$\lambda\in (b_i,a_{i+1})$, where $\cM_\lambda$ is defined as in \eqref{StableSpace}. If $\xi\in S_j$, then for each $\lambda\in (b_i,a_{i+1})$, there exists $K\ge 1$ such that
\begin{displaymath}
  \|X(t)\xi\|\le Ke^{\lambda(t-s)}\|X(s)\xi\| \fa t\ge s\,.
\end{displaymath}
It follows that $\xi$ is in the pseudo-stable subspace for the exponential dichotomy with growth rate $\lambda$ of $\dot x=A(t)x$ for $\lambda\in (b_i,a_{i+1})$. Hence,
$\xi\in \mathcal W_i$, which proves $\mathcal S_{j}\subset \mathcal W_{i}$, with $\mathcal W_i$ defined as in Theorem~\ref{theo3}. Conversely, note that since  $\mathcal W_{i}$ is the pseudo-stable subspace for the exponential dichotomy with growth rate $\lambda$ of $\dot x=A(t)x$ for $\lambda\in (b_i,a_{i+1})$, there exist
constants $K, \alpha>0$ such that for all  $\xi\in \mathcal W_{i}$, we have
\begin{displaymath}
  \|X(t)\xi\|\le Ke^{(\lambda-\alpha)(t-s)}\|X(s)\xi\|  \fa t\ge s\,.
\end{displaymath}
This means that $\Sigma_{\xi}\subset (-\infty,\lambda)$ for all $\lambda\in (b_i,a_{i+1})$
which implies that $\xi\in \mathcal S_{j}$. Thus $\mathcal S_{j}=\mathcal W_{i}$.
In fact, what we have proved is that the Bohl filtration is $\mathcal S_{i}$, $i\in\set{1,\dots,m}$, and the Sacker--Sell filtration is $\mathcal W_{i}$,
for $i\in\set{1,\ldots,n}$, where $n\le m\le d$, and there exist
$i_{1}<i_{2}<\cdots<i_{n}$ such that $\mathcal S_{i_{j}}=\mathcal W_{j}$.
\end{proof}
\begin{theorem}[Bohl and Sacker--Sell spectra of diagonalizable systems]\label{theo1}
  Suppose that the bounded linear nonautonomous differential equation \eqref{Eq1} is \emph{diagonalizable}, i.e.~it is kinematically similar to a (nonautonomous) diagonal system.
  Then the Bohl and Sacker--Sell spectrum of \eqref{Eq1} coincide. In particular, both spectra coincide for bounded one-dimensional systems.
\end{theorem}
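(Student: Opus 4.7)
My plan is to reduce to a diagonal system via kinematic similarity and then produce the missing inclusion by an explicit dichotomy construction. By hypothesis, \eqref{Eq1} is kinematically similar to some $\dot y = D(t) y$ with $D(t) = \operatorname{diag}(a_1(t), \ldots, a_d(t))$, and both spectra are invariant under such similarity---the Bohl case is the preceding proposition, and the Sacker--Sell case is classical (see \cite{Coppel_78_1}). Although $D$ itself need not be bounded, the identity $Y(t) Y(s)^{-1} = S(t)^{-1} X(t) X(s)^{-1} S(s)$ (where $X, Y$ are the fundamental matrices and $S$ realises the similarity) combined with $\|A(t)\| \leq M$, Gronwall, and $\|S\|, \|S^{-1}\| \leq C_0$ gives the uniform estimate $\|Y(t) Y(s)^{-1}\| \leq C_0^2 e^{M|t-s|}$; this is the only consequence of boundedness I shall use.

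Since $D$ is diagonal, $Y(t) e_i = \phi_i(t) e_i$ with $\phi_i(t) := \exp(\int_0^t a_i(u)\,\rmd u)$, so $\frac{1}{t-s} \ln \frac{\|Y(t) e_i\|}{\|Y(s) e_i\|} = \frac{1}{t-s}\int_s^t a_i(u)\,\rmd u$. By Proposition~\ref{Lemma}~(iii), $\Sigma_{e_i} = [\underline\beta(e_i), \overline\beta(e_i)]$, whose endpoints are the $\liminf$ and $\limsup$ of these averages and are finite because $\Sigma_{\rm Bohl} \subset [-M,M]$ by Proposition~\ref{Lemma}~(iv). Hence $\bigcup_{i=1}^d [\underline\beta(e_i), \overline\beta(e_i)] \subseteq \Sigma_{\rm Bohl}$.

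It remains to verify $\Sigma_{\rm SS} \subseteq \bigcup_i [\underline\beta(e_i), \overline\beta(e_i)]$. Given $\gamma$ outside this union, I set $P_i := 1$ whenever $\overline\beta(e_i) < \gamma$ and $P_i := 0$ whenever $\underline\beta(e_i) > \gamma$, and take $P := \operatorname{diag}(P_1, \ldots, P_d)$. In the case $P_i = 1$, the bound $\overline\beta(e_i) < \gamma$ yields $\eta, T > 0$ with $|\phi_i(t)/\phi_i(s)| \leq e^{(\gamma-\eta)(t-s)}$ for $t - s \geq T$, while the uniform estimate above handles $t - s \in [0, T]$ through a single constant; this produces a stable dichotomy estimate on the $i$th axis. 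The $P_i = 0$ case is symmetric, using the corresponding lower bound $|\phi_i(t)/\phi_i(s)| \geq C_0^{-2} e^{-M(t-s)}$. Since both $P$ and $Y(t) Y(s)^{-1}$ are diagonal, these coordinate bounds assemble directly into the full dichotomy inequalities, so $\gamma \notin \Sigma_{\rm SS}$.

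Combining with Theorem~\ref{ken1}(i) yields $\bigcup_i [\underline\beta(e_i), \overline\beta(e_i)] \subseteq \Sigma_{\rm Bohl} \subseteq \Sigma_{\rm SS} \subseteq \bigcup_i [\underline\beta(e_i), \overline\beta(e_i)]$, so all three agree, and the similarity invariance transfers the equality back to $A$. The one-dimensional case is immediate since every scalar equation is already diagonal. The principal obstacle in this plan is that the diagonal form $D$ need not be bounded, so Proposition~\ref{prop1} is inapplicable; the uniform fundamental-matrix estimate $\|Y(t) Y(s)^{-1}\| \leq C_0^2 e^{M|t-s|}$ inherited from boundedness of $A$ is precisely what lets me construct the dichotomy by hand.
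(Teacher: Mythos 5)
Your proof is correct, and it takes a genuinely different route on the Sacker--Sell side. The paper also reduces to a diagonal system $\dot y = D(t)y$ via kinematic similarity and also identifies $\Sigma_{e_i}$ as the closed interval between the lower and upper Bohl exponents of the $i$th coordinate; but there the paper simply asserts ``where the $a_i(t)$ are bounded'' and invokes Proposition~\ref{prop1} to read off $\Sigma_{\rm SS} = \bigcup_i[\alpha_i,\beta_i]$, then closes with $\bigcup_i[\alpha_i,\beta_i]\subset\Sigma_{\rm Bohl}\subset\Sigma_{\rm SS}=\bigcup_i[\alpha_i,\beta_i]$. Your objection that boundedness of $A$ together with boundedness of $S$ and $S^{-1}$ does \emph{not} force $D$ to be bounded is well taken: one can already see in dimension one (take $A\equiv 0$ and $S(t)=e^{\sin(t^2)}$, so $D(t)=-2t\cos(t^2)$) that the transformed coefficient may be unbounded, so Proposition~\ref{prop1}'s representation \eqref{bohlrep} cannot be applied without further argument. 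Instead of supplying that argument, you sidestep it entirely: you extract from $\|A(t)\|\le M$ and $\|S\|,\|S^{-1}\|\le C_0$ the uniform two-sided estimate on $Y(t)Y(s)^{-1}$, use it to control the transition functions $\phi_i(t)/\phi_i(s)$ on bounded time windows, use the definitions of $\underline\beta(e_i),\overline\beta(e_i)$ for large $t-s$, and assemble a diagonal projector realising an exponential dichotomy at any $\gamma\notin\bigcup_i\Sigma_{e_i}$. Combined with Theorem~\ref{ken1}(i) this gives the sandwich and hence coincidence. What you gain over the paper's version is that the boundedness of $D$ is never needed, only boundedness of $A$, which is exactly the hypothesis of the theorem; what you give up is the one-line appeal to Proposition~\ref{prop1}. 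The only minor point you might add is that, since the finitely many $\Sigma_{e_i}$ are all contained in $[-M,M]$, a spectral value $\gamma=\pm\infty$ outside the union is handled by the dichotomy with $P=\mathds{1}$ or $P=0$ and a finite growth rate beyond $\pm M$, matching the paper's convention for infinite growth rates.
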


\begin{proof}
  By assumption, the linear system \eqref{Eq1} is kinematically similar to a diagonal system
  \begin{equation}\label{Eq17}
    \dot x=D(t)x={\rm diag}(a_1(t),\ldots, a_d(t))\,x\,,
  \end{equation}
  where the $a_i(t)$ are bounded.  Since the Bohl and Sacker--Sell spectra are invariant under kinematic similarity,
  it is sufficient to show that the Bohl spectrum $\Sigma_{\rm Bohl}$ and the dichotomy spectrum
  $\Sigma_{\rm SS}$ of \eqref{Eq17} coincide.
  For $i\in\set{1,\dots, d}$, define
  \[
    \alpha_i:=\liminf_{t-s\to\infty}\frac{1}{t-s}\int^t_sa_{i}(u)\,\rmd u
    \qandq
    \beta_i=\limsup_{t-s\to\infty}\frac{1}{t-s}\int^t_sa_{i}(u)\,\rmd u\,.
  \]
  It follows from Proposition~\ref{prop1} that
  \[
    \Sigma_{\rm SS}=\bigcup_{i=1}^d\,[\alpha_i,\beta_i]\,.
  \]
%
  To compute $\Sigma_{\rm Bohl}$, let $(e_1,\dots,e_d)$ denote the standard orthonormal basis of $\R^d$.
  A simple computation yields that
  \[
  \Sigma_{e_i}=[\alpha_i,\beta_i]\fa i\in\set{1,\dots,d}\,,
  \]
  which implies
  \[\bigcup_{i=1}^d[\alpha_i,\beta_i]\subset \Sigma_{\rm Bohl}\subset \Sigma_{\rm SS}
  =\bigcup_{i=1}^d[\alpha_i,\beta_i]\,,\]
  and completes the proof.
\end{proof}

\begin{remark}
  Proposition~\ref{Non-coincidence} and Theorem~\ref{theo1} also show that the Bohl spectrum of a bounded upper triangular
  system is, in general, not equal to that for the diagonal part, unlike the situation for the Sacker--Sell
  spectrum in the bounded half-line case (see also Proposition~\ref{prop1}). However the Bohl spectrum of the triangular system is a subset of the
  Sacker--Sell spectrum (see Theorem~\ref{ken1} above), which equals the Sacker--Sell spectrum of the diagonal part, and
  the Sacker--Sell spectrum of the diagonal part coincides with its Bohl spectrum (see Theorem~\ref{theo1} above). We conclude
  that for bounded systems, the Bohl spectrum of an upper triangular system is a subset of the Bohl spectrum of its diagonal part.
\end{remark}

The linear nonautonomous differential equation \eqref{Eq1} is said to be \emph{integrally separated} if there are $d$ independent
solutions $X(t)\xi_1,\dots,X(t)\xi_d$ such that $X(t)\xi_i$ and $X(t)\xi_{i+1}$ are
integrally separated for all $i\in\set{1,\dots,d-1}$.

We now prove using the previous theorem that the Bohl and Sacker--Sell spectra coincide for bounded integrally separated
systems. This means also that the Bohl spectrum depends continuously on parameters for such systems.
\begin{corollary}
  Suppose that system \eqref{Eq1} is integrally separated, and $A(t)$ is bounded in $t\in\R^+_0$.  Then the Bohl spectrum coincides with the Sacker--Sell spectrum of \eqref{Eq1}.
\end{corollary}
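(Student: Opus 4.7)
The plan is to reduce the statement immediately to Theorem~\ref{theo1} by invoking the classical Bylov diagonalization theorem: a bounded linear system is integrally separated if and only if it is kinematically similar to a diagonal system (see, e.g., \cite{Adrianova_95_1}, Theorem~5.4.6). If we may quote this result, then \eqref{Eq1} is diagonalizable in the sense of Theorem~\ref{theo1}, and the conclusion $\Sigma_{\mathrm{Bohl}} = \Sigma_{\mathrm{SS}}$ follows at once.

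More explicitly, first I would fix the integrally separated basis $\xi_1,\dots,\xi_d$ and apply the Gram--Schmidt procedure to the fundamental matrix columns $X(t)\xi_1,\dots,X(t)\xi_d$, obtaining a continuously differentiable orthogonal matrix-valued function $S(t)$. Because $A(t)$ is bounded and the basis is integrally separated, the angles between the successive solutions are bounded away from zero, which forces $S(t)$ and $S(t)^{-1}$ to be uniformly bounded. A direct computation shows that $S$ conjugates \eqref{Eq1} into an upper triangular system with bounded off-diagonal entries. Integral separation of the columns then translates into integral separation of the diagonal entries $a_{ii}(t)$ of the triangular form.

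The main step is then to further diagonalize this integrally separated upper triangular system by a bounded kinematic similarity. One solves inductively the off-diagonal entries of an upper unitriangular transformation $T(t)$, where each entry $t_{ij}(t)$ satisfies a scalar linear ODE whose homogeneous part has exponent $a_{jj}(t)-a_{ii}(t)$; integral separation guarantees that this homogeneous equation admits an exponential dichotomy with a uniform gap, so the corresponding inhomogeneous equation has a unique bounded solution, and a Gronwall estimate keeps $T(t)$ and $T(t)^{-1}$ uniformly bounded. This exhibits \eqref{Eq1} as kinematically similar to a bounded diagonal system, and Theorem~\ref{theo1} yields $\Sigma_{\mathrm{Bohl}}=\Sigma_{\mathrm{SS}}$.

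The only real obstacle is the diagonalization step itself, but since this is a classical theorem of Bylov, the cleanest route is simply to cite it and reduce to Theorem~\ref{theo1}, rather than to reprove it here. If a self-contained argument is preferred, one can alternatively bypass Theorem~\ref{theo1}: Theorem~\ref{ken1}(i) already gives $\Sigma_{\mathrm{Bohl}}\subset\Sigma_{\mathrm{SS}}$, and for the reverse inclusion one uses Lemma~\ref{Lemma4} applied to the integrally separated basis to show that $\Sigma_{\xi_i}$ already realizes each Sacker--Sell interval, so that $\bigcup_i \Sigma_{\xi_i}\subset\Sigma_{\mathrm{Bohl}}$ exhausts $\Sigma_{\mathrm{SS}}$.
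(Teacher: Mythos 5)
Your primary route is exactly the paper's proof: cite Bylov's diagonalization theorem for bounded integrally separated systems (the paper uses \cite[Theorem~5.3.1, p.~149]{Adrianova_95_1}) and then apply Theorem~\ref{theo1}. The extra sketch of Bylov's theorem and the alternative route you mention are superfluous for this corollary but do not detract from the correctness.
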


\begin{proof}
  By Bylov's Theorem \cite[Theorem~5.3.1, p.~149]{Adrianova_95_1}, the linear system \eqref{Eq1} is
  kinematically similar to a bounded diagonal system
  \begin{displaymath}
    \dot x=D(t)x={\rm diag}(a_1(t),\ldots, a_d(t))x\,,
  \end{displaymath}
  and a direct application of Theorem~\ref{theo1} completes the proof.
\end{proof}

\begin{remark}
  The boundedness assumption of $A(t)$ in the above corollary is needed, since there exists an  unbounded integrally separated system which is not diagonalizable such that its Bohl spectrum and and its Sacker--Sell spectrum are different. Consider the system $\dot x=A(t)x$, where $A(t)$ is defined by
  \[
  A(t)
  :=
    \begin{pmatrix}
      0 &  2e^{t} \\
      0 & 1 \\
    \end{pmatrix}
  \fa t \ge 0\,.
  \]
  The fundamental matrix solution $X(t)$ of this system is given by
  \[
  X(t)=
         \begin{pmatrix}
           1 & e^{2t}-1 \\
           0 & e^t \\
         \end{pmatrix}
       \fa t \ge0\,.
  \]
  Note that
  \[
  X(t)\begin{pmatrix}
          1 \\
          0
        \end{pmatrix}
      =
        \begin{pmatrix}
  1\\
  0\\
        \end{pmatrix}
      \qandq
  X(t)
        \begin{pmatrix}
          0 \\
          1 \\
        \end{pmatrix}
      =
        \begin{pmatrix}
  e^{2t}-1\\
  e^t\\
        \end{pmatrix}
      ,
  \]
  which implies that these two solutions are integrally separated.
  It follows from Lemma~\ref{Lemma4} that $\Sigma_{{\rm Bohl}}=\{0\}\cup\{2\}$, and by explicit presentation of $X(t)$, we see that the system is not reducible and hence $\Sigma_{{\rm SS}}$
  is an interval containing the points $0$ and $2$.
\end{remark}
Let $\mathcal B$ denote the linear space of bounded measurable matrix-valued functions  $A:\R_0^+\to \R^{d\times d}$. We endow $\mathcal B$ with the $L^{\infty}$-norm defined by
\begin{displaymath}
  \|A-B\|_{\infty}=\operatorname{ess\,sup}_{t\in\R_0^+}\|A(t)-B(t)\|\,,
\end{displaymath}
so that $(\mathcal B,\|\cdot\|_{\infty})$ is a Banach space. Using \cite{Millionscikov_69_1}, one can show that there exists an open and dense set $\mathcal R$ of $\mathcal B$ such that for all $A\in\mathcal R$, the associated linear nonautonomous differential equation is integrally separated (note that genericity of exponential dichotomies for two-dimensional quasi-periodic linear systems was treated in \cite{Fabbri_00_1}). As a consequence, we obtain the following corollary.

\begin{corollary}[Coincidence is generic]\label{Cor1}
  The Bohl spectrum and the Sacker--Sell spectrum coincide generically for bounded linear nonautonomous differential equations.
\end{corollary}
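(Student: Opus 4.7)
The plan is to assemble the corollary directly from the two ingredients that have already been put in place in the paper, so the proof is essentially a one-step invocation rather than a new argument. First I would invoke the result of Millionshchikov \cite{Millionscikov_69_1} mentioned in the paragraph immediately preceding the statement: it provides an open and dense subset $\mathcal{R} \subset \mathcal{B}$ (with the $L^\infty$-topology) such that for every $A \in \mathcal{R}$ the associated linear nonautonomous differential equation $\dot x = A(t)x$ is integrally separated in the sense defined just after the remark following Theorem~\ref{theo1}.

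Next I would appeal to the preceding corollary, which asserts that whenever $A \in \mathcal{B}$ is such that $\dot x = A(t)x$ is integrally separated and $A(t)$ is bounded in $t \in \R_0^+$, the Bohl spectrum $\Sigma_{\mathrm{Bohl}}$ and the Sacker--Sell spectrum $\Sigma_{\mathrm{SS}}$ of that equation coincide. Since elements of $\mathcal{B}$ are bounded by definition of the space, both hypotheses of that corollary are satisfied for every $A \in \mathcal{R}$. Combining the two statements yields at once that $\Sigma_{\mathrm{Bohl}}(A) = \Sigma_{\mathrm{SS}}(A)$ for all $A$ in the open dense set $\mathcal{R}$, which is exactly what is required.

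The argument has no genuine obstacle, because all non-trivial work has been carried out earlier: the structural result identifying integrally separated systems with diagonalizable ones (Bylov's theorem, used in the previous corollary), the invariance of both spectra under kinematic similarity, and the deep genericity statement of Millionshchikov. The only thing to be a little careful about is to record that ``generically'' here is meant in the topological sense of containing an open dense set in $(\mathcal{B}, \|\cdot\|_\infty)$, matching the formulation in the paragraph preceding the corollary, rather than in any measure-theoretic sense.
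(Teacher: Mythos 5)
Your proof matches the paper's approach exactly: the paper states the corollary "as a consequence" of the preceding paragraph (Millionshchikov's genericity of integral separatedness in $(\mathcal B,\|\cdot\|_\infty)$) together with the immediately preceding corollary on bounded integrally separated systems, which is precisely the two-step assembly you carry out. Your closing remark on the topological meaning of "generically" is also in line with the paper's framing.
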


We demonstrate by means of a counterexample that the Bohl spectrum is not ever upper semi-continuous in general with perturbations to the right-hand side in the $L^{\infty}$-norm. Note that the Sacker--Sell spectrum is upper semi-continuous in general, and in \cite{Poetzsche_Unpub_1}, sufficient criteria for continuity of the Sacker--Sell spectrum are established.

\begin{corollary}[Discontinuity of the Bohl spectrum]
The mapping $A\mapsto \Sigma_{\rm Bohl}(A)$ is not upper semi-continuous in general.
\end{corollary}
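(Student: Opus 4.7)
The strategy is to revisit the bounded system $A$ from Proposition~\ref{Non-coincidence} (for which $\Sigma_{\rm Bohl}(A) = \{-1\}$ while $\Sigma_{\rm SS}(A) = [-1, 0]$) and exhibit an explicit $L^\infty$-small perturbation whose Bohl spectrum stays bounded away from $\{-1\}$. I would take the diagonal perturbation
\[
A_n(t) := A(t) + \mathrm{diag}(0, 1/n)\,,
\]
so that $\|A_n - A\|_\infty = 1/n \to 0$. The system $A_n$ remains upper triangular with $(1,1)$-entry $-1$; the $(2,2)$-entry now equals $-1 + 1/n$ on intervals $[T_{2k}, T_{2k+1}]$ and $1/n$ on intervals $[T_{2k+1}, T_{2k+2}]$, while the off-diagonal coupling $\delta$ is preserved on the former.

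I would then analyze the solution $X(t)(0,1)^{\top} = (x(t), y(t))$ of $\dot{z}=A_n(t)z$. Its $y$-component satisfies a scalar equation, so $y$ grows at rate $1/n$ on $[T_{2k+1}, T_{2k+2}]$ and decays at rate $-1+1/n$ on $[T_{2k}, T_{2k+1}]$. For $x$, the driven equation $\dot x = -x + \delta y$ on $[T_{2k}, T_{2k+1}]$ admits a particular solution $n\delta\, y(T_{2k})\, e^{(-1+1/n)(t-T_{2k})}$ whose coefficient is large because of the near-resonance between the homogeneous rate $-1$ and the forcing rate $-1+1/n$. This term dominates the homogeneous part $e^{-(t-T_{2k})}$ once $T_{2k+1}-T_{2k}$ is large, yielding $x(T_{2k+1}) \approx n\delta\, y(T_{2k+1})$. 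On $[T_{2k+1},T_{2k+2}]$ the coupling vanishes: $x$ decays at rate $-1$ and $y$ grows at rate $1/n$, so the ratio $x/y$ is multiplied by $e^{-(1+1/n)(T_{2k+2}-T_{2k+1})}$; once $T_{2k+2}-T_{2k+1} > \log(n\delta)$ this ratio drops below $1$, hence $|y|$ dominates $|x|$ at $T_{2k+2}$.

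Working in the maximum norm on $\R^2$ (permitted by Remark~\ref{Remark1}(iii)), one therefore has $\|X(T_{2k+1})(0,1)^{\top}\| \approx n\delta\, y(T_{2k+1})$ and, for $k$ large enough in terms of $n$, $\|X(T_{2k+2})(0,1)^{\top}\| = y(T_{2k+2})$. With $s_m := T_{2k_m+1}$ and $t_m := T_{2k_m+2}$, $k_m \to \infty$, the hypothesis $T_{k+1}-T_k \to \infty$ ensures $t_m - s_m \to \infty$, and a direct computation gives
\[
  \frac{1}{t_m-s_m} \log \frac{\|X(t_m)(0,1)^{\top}\|}{\|X(s_m)(0,1)^{\top}\|} \;\approx\; \frac{1}{n} - \frac{\log(n\delta)}{t_m-s_m} \;\longrightarrow\; \frac{1}{n}\,.
\]
By Definition~\ref{BohlSpectrum} this shows $1/n \in \Sigma_{(0,1)^{\top}} \subseteq \Sigma_{\rm Bohl}(A_n)$ for every sufficiently large $n$.

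The conclusion is immediate: although $A_n \to A$ in $L^\infty$, the point $1/n$ lies in $\Sigma_{\rm Bohl}(A_n)$ while $\mathrm{dist}(1/n, \Sigma_{\rm Bohl}(A)) = 1 + 1/n \ge 1$, so no $\epsilon$-neighborhood of $\Sigma_{\rm Bohl}(A)$ with $\epsilon < 1$ can contain $\Sigma_{\rm Bohl}(A_n)$ eventually, contradicting upper semi-continuity. The main technical obstacle is to make the two dominance regimes $|x| \gg |y|$ at $T_{2k+1}$ and $|y| \gg |x|$ at $T_{2k+2}$ propagate inductively from one cycle to the next; this relies both on the divergence of the two interval types built into the construction of $A$ and on the fact that the resonance amplification $n\delta$ is independent of $t$, so that it survives the limit $t_m - s_m \to \infty$.
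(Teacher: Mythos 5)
Your proposal uses exactly the same perturbation as the paper: the diagonal shift $A_\varepsilon = A + \mathrm{diag}(0,\varepsilon)$ with $\varepsilon = 1/n$, applied to the system of Proposition~\ref{Non-coincidence}. What differs is how you extract a point of $\Sigma_{\rm Bohl}(A_\varepsilon)$ away from $\{-1\}$. The paper argues structurally: since the Sacker--Sell spectrum of $A_\varepsilon$ (read off the diagonal via Proposition~\ref{prop1}) consists of the two \emph{disjoint} intervals $\{-1\}$ and $[-1+\varepsilon,\varepsilon]$, the system is integrally separated, and by the corollary on bounded integrally separated systems the Bohl spectrum equals the Sacker--Sell spectrum; in particular $\varepsilon$ lies in it. You instead compute directly with the solution $X(t)(0,1)^\top$, tracking the ratio $x/y$: a near-resonance calculation shows it climbs to $\approx n\delta$ across each $A_1$-phase, then collapses below $1$ across each $A_2$-phase, so the max-norm is dominated by $|x|$ at $T_{2k+1}$ and by $|y|$ at $T_{2k+2}$, giving a growth rate approaching $1/n$ along the sequence $(T_{2k_m+1},T_{2k_m+2})$. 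Both arguments are valid. Yours is more elementary and self-contained — no Bylov reduction, no Proposition~\ref{prop1} — at the cost of identifying only a single point of $\Sigma_{\rm Bohl}(A_n)$ rather than the full set, and of requiring the inductive bookkeeping on the ratio $r_k = x(T_{2k})/y(T_{2k})$ that you flag but do not fully carry out (one should verify $r_k\to 0$, which follows since $r_{k+1} \le e^{-(1+1/n)(T_{2k+2}-T_{2k+1})}\bigl(e^{-(T_{2k+1}-T_{2k})/n}\,r_k + \delta n\bigr)$ and $T_{j+1}-T_j\to\infty$). The paper's route buys generality and brevity by leaning on the already-developed integral-separation machinery; yours is a good sanity check that the mechanism is genuinely the off-diagonal resonance.
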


\begin{proof}
  Consider the linear system \eqref{Eq2}, and for $\varepsilon \in \R$, define the perturbations
  \begin{displaymath}
    A_{\varepsilon}(t):=
    \begin{pmatrix}
      -1&\delta\cr 0&-1+\varepsilon
    \end{pmatrix}
    \fa t\in [T_{2k}, T_{2k+1}]
  \end{displaymath}
  and
  \begin{displaymath}
    A_{\varepsilon}(t) := \begin{pmatrix}-1&0\cr 0&\varepsilon\end{pmatrix} \fa t\in [T_{2k+1}, T_{2k+2}]\,.
  \end{displaymath}
  Looking at the diagonal, we see that this system has the Sacker--Sell spectrum $\{-1\}\cup [-1+\varepsilon,\varepsilon]$. In particular, for $\eps>0$, it follows
  that the system is integrally separated, and hence, the Bohl spectrum is also $\{-1\}\cup [-1+\varepsilon,\varepsilon]$. However, the Bohl spectrum for $\varepsilon = 0$ is given by
  $\{-1\}$ (see Proposition~\ref{Non-coincidence}), so the Bohl spectrum is not upper semi-continuous at $\varepsilon = 0$.
\end{proof}

Suppose the Sacker--Sell spectrum consists of points. Then by Theorem~\ref{ken1}, the Bohl spectrum consists of points. We still need to prove each point in
the Sacker--Sell spectrum is also in the Bohl spectrum. This follows from the next lemma.

\begin{lemma}
  Let $[a,b]$ be a spectral interval of the Sacker--Sell spectrum of the linear nonautonomous differential equation \eqref{Eq1}. Then there exists a solution whose Bohl spectrum is contained in $[a,b]$.
\end{lemma}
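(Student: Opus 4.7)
The plan is to exhibit a solution whose Bohl spectrum lies in $[a,b] = [a_i,b_i]$, the $i$-th Sacker--Sell spectral interval, by choosing any $\xi$ from the set-theoretic difference of consecutive filtration subspaces and bounding its Bohl spectrum from both sides via the exponential dichotomies bracketing the interval. Concretely, I would take any $\xi \in \mathcal{W}_i \setminus \mathcal{W}_{i-1}$ (nonempty because $\mathcal{W}_{i-1} \subsetneq \mathcal{W}_i$) and show $\Sigma_{\xi} \subset [a_i,b_i]$. By Proposition~\ref{Lemma}(iii), it suffices to control $\overline\beta(\xi)$ and $\underline\beta(\xi)$ separately.

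For the upper bound $\overline\beta(\xi) \le b_i$, I would use any $\gamma \in (b_i,a_{i+1})$ (with the convention $a_{k+1} = +\infty$). By Theorem~\ref{theo3}, \eqref{Eq1} admits an exponential dichotomy with growth rate $\gamma$ and pseudo-stable space $\mathcal{W}_i$. Since $\xi \in \mathcal{W}_i$, the stable estimate gives $\|X(t)\xi\| \le K e^{\gamma(t-s)}\|X(s)\xi\|$ for $t \ge s \ge 0$, hence $\overline\beta(\xi) \le \gamma$. Taking $\gamma \downarrow b_i$ yields $\overline\beta(\xi) \le b_i$.

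For the lower bound $\underline\beta(\xi) \ge a_i$, I would use any $\gamma \in (b_{i-1},a_i)$ (with the convention $b_0 = -\infty$). The dichotomy with growth rate $\gamma$ comes with a projector $P$ with range $\mathcal{W}_{i-1}$ and constants $K,\alpha > 0$. Split $\xi = \xi_s + \xi_u$ where $\xi_s := P\xi$ and $\xi_u := (I-P)\xi$; since $\xi \notin \mathcal{W}_{i-1}$, we have $\xi_u \ne 0$. The stable/unstable estimates evaluated at $s=0$ yield
\[
  \|X(t)\xi_s\| \le K e^{(\gamma-\alpha)t}\|\xi\| \qandq \|X(t)\xi_u\| \ge K^{-1}e^{(\gamma+\alpha)t}\|\xi_u\|,
\]
so $\|X(t)\xi_s\|/\|X(t)\xi_u\| \to 0$ exponentially, and for $t$ large enough one has $\frac{1}{2}\|X(t)\xi_u\| \le \|X(t)\xi\| \le \frac{3}{2}\|X(t)\xi_u\|$. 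Combined with Proposition~\ref{Lemma}(i) (which allows us to restrict the defining sequences to $s_n \to \infty$), this forces $\Sigma_\xi = \Sigma_{\xi_u}$. Meanwhile, the unstable part of the dichotomy reads $\|X(s)\xi_u\| \ge K^{-1}e^{(\gamma+\alpha)(s-t)}\|X(t)\xi_u\|$ for $s \ge t \ge 0$, so $\underline\beta(\xi_u) \ge \gamma + \alpha > \gamma$. Thus $\underline\beta(\xi) > \gamma$ for every $\gamma \in (b_{i-1},a_i)$, giving $\underline\beta(\xi) \ge a_i$.

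Combining the two bounds gives $\Sigma_\xi \subset [a_i,b_i]$. The boundary cases $i=1$ (where either $a_1 = -\infty$ makes the lower bound trivial or we use an arbitrary $\gamma < a_1$ with projector $P=0$, so $\xi_u = \xi$) and $i=k$ (symmetric, with $b_k = +\infty$ trivial or $P=I$ making the upper bound straightforward) fit into the same framework. The only delicate step is the identity $\Sigma_\xi = \Sigma_{\xi_u}$ --- everything else is a direct reading of the dichotomy inequalities --- and even that reduces to routine estimates once one knows $\|X(t)\xi_s\|$ is exponentially negligible compared to $\|X(t)\xi_u\|$.
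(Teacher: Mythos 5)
Your proof is correct and reaches the same conclusion as the paper's, but the treatment of the lower bound $\underline\beta(\xi)\ge a_i$ is a genuine variation. The paper fixes a complement $\mathcal V$ of $\mathcal W_{i-1}$ inside $\mathcal W_i$, picks $\xi\in\mathcal V\setminus\{0\}$, and invokes a result from \cite{Rasmussen_09_1} to assert that $\mathcal V$ is a pseudo-unstable subspace for the dichotomy with growth rate $a-\varepsilon$, so the unstable estimate applies directly to $X(t)\xi$ itself. You instead take an arbitrary $\xi\in\mathcal W_i\setminus\mathcal W_{i-1}$, split $\xi=\xi_s+\xi_u$ according to the projector of that dichotomy, and reduce to $\xi_u$ via the comparison $\Sigma_\xi=\Sigma_{\xi_u}$, which is essentially a re-derivation of Lemma~\ref{Lemma4} (the solutions $X(t)\xi_s$ and $X(t)\xi_u$ are integrally separated). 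Your route avoids the cited fact about complements being realizable as pseudo-unstable spaces and sidesteps the question of whether $\mathcal V$ can be chosen independently of $\varepsilon$; the price is one extra comparison step, but it makes the argument more self-contained and has the small added benefit of showing $\Sigma_\xi\subset[a_i,b_i]$ for \emph{every} $\xi\in\mathcal W_i\setminus\mathcal W_{i-1}$, not just those in a particular complement. One cosmetic remark: your stable estimate has slack --- for $\xi$ in the pseudo-stable space the dichotomy actually gives $\|X(t)\xi\|\le Ke^{(\gamma-\alpha)(t-s)}\|X(s)\xi\|$, so $\overline\beta(\xi)\le\gamma-\alpha<\gamma$; the weaker form you wrote still suffices after taking $\gamma\downarrow b_i$. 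The boundary-case discussion ($i=1$, $i=k$, and $a_1=-\infty$ or $b_k=+\infty$) is handled correctly.
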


\begin{proof}
  Let $\set{I_i=[a_i,b_i]}_{i\in\set{1,\dots,k}}$ be the ordered Sacker--Sell spectral intervals of \eqref{Eq1}. Consider the filtration
  \[
  \{0\}=\mathcal W_0 \subsetneq\mathcal W_1\subsetneq\mathcal W_2\subsetneq\dots\subsetneq \mathcal W_k =\R^d\,,
  \]
  established in Theorem~\ref{theo3}, satisfying the dynamical characterization
  \begin{displaymath}
    \mathcal W_i= \setB{\textstyle\xi\in \R^d: \sup_{t\in\R^+_0} \|X(t)\xi\|e^{-\gamma t} < \infty}
  \end{displaymath}
  for all $i\in\set{1,\dots,k-1}$ and $\gamma\in (b_i,a_{i+1})$. There exists an $i\in\set{1,\dots,k}$ such that $I_i = [a,b] = [a_i,b_i]$. Note that $\mathcal W_{i-1}$ is a proper subspace of $\mathcal W_i$ and we can write $\mathcal W_i=\mathcal W_{i-1}\oplus \mathcal V$
  with $\mathcal V \not= \set{0}$. Now $\dot x=A(t)x$ has an exponential dichotomy with growth rate $b+\varepsilon$ with pseudo-stable subspace $\mathcal W_i$. This means that for
  all $\xi\in \mathcal W_i$, there exist $K_1>0$ and $\alpha_1>0$ such that
  \begin{equation}\label{inequ1}
    \frac{\|X(t)\xi\|}{\|X(s)\xi\|} \le K_1e^{(b+\varepsilon-\alpha_1)(t-s)} \fa t\ge s\ge 0\,.
  \end{equation}
  Next $\dot x=A(t)x$ has an exponential dichotomy with growth rate $a-\varepsilon$ with a pseudo-unstable subspace
  $\mathcal V$ \cite[Remark~5.6 and Lemma~6.1]{Rasmussen_09_1}. This means that for all $\xi\in \mathcal V$, there exist $K_2>0$ and $\alpha_2>0$ such that
  \begin{equation}\label{inequ2}
    \frac{\|X(t)\xi\|}{\|X(s)\xi\|} \ge K_2e^{(a-\varepsilon+\alpha_2)(t-s)} \fa t\ge s\ge0\,.
  \end{equation}
  From \eqref{inequ1}, it follows that
  \[ \limsup_{t-s\to\infty}\frac{1}{t-s}\ln\frac{\|X(t)\xi\|}{\|X(s)\xi\|}
    \le b+\varepsilon-\alpha_1<b+\varepsilon\,,\]
  and from \eqref{inequ2}, it follows that
  \[ \liminf_{t-s\to\infty}\frac{1}{t-s}\ln\frac{\|X(t)\xi\|}{\|X(s)\xi\|}
    \ge a-\varepsilon+\alpha_2>a-\varepsilon\,.\]
  Since $\varepsilon>0$ was chosen arbitrarily, it follows that
  \[ a\le \liminf_{t-s\to\infty}\frac{1}{t-s}\ln \frac{\|X(t)\xi\|}{\|X(s)\xi\|}
  \le  \limsup_{t-s\to\infty}\frac{1}{t-s}\ln\frac{\|X(t)\xi\|}{\|X(s)\xi\|}
    \le b\,.\]
  Thus, $\Sigma_{\xi}\subset [a,b]$.
\end{proof}

\begin{corollary} If the Sacker--Sell spectrum consists of points, then
it coincides with the Bohl spectrum.
\end{corollary}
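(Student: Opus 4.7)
My plan is to combine the preceding lemma with the inclusion established in Theorem~\ref{ken1}. By Theorem~\ref{ken1}(i), we already have $\Sigma_{\rm Bohl}\subset \Sigma_{\rm SS}$, so only the reverse inclusion remains.

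For the reverse inclusion, I would take an arbitrary $\lambda\in\Sigma_{\rm SS}$. Since $\Sigma_{\rm SS}$ is assumed to consist of points, the Sacker--Sell spectral interval containing $\lambda$ is the singleton $[a,b]=\{\lambda\}$. Apply the preceding lemma to this spectral interval: it yields a nonzero $\xi\in\R^d$ such that $\Sigma_{\xi}\subset\{\lambda\}$. By Proposition~\ref{Lemma}(iii), the Bohl spectrum $\Sigma_{\xi}=[\underline{\beta}(\xi),\overline{\beta}(\xi)]$ of a nonzero solution is a nonempty interval, so the inclusion $\Sigma_{\xi}\subset\{\lambda\}$ forces $\Sigma_{\xi}=\{\lambda\}$. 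In particular $\lambda\in\Sigma_{\xi}\subset\Sigma_{\rm Bohl}$, which proves $\Sigma_{\rm SS}\subset\Sigma_{\rm Bohl}$.

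There is essentially no obstacle here: the work has been done by the preceding lemma (which provides a solution whose Bohl spectrum lies inside each Sacker--Sell spectral interval) and by the structural result that every $\Sigma_{\xi}$ is a nonempty interval. The only point worth flagging is that the argument uses the nonemptiness of $\Sigma_{\xi}$ in a crucial way; this is immediate from Proposition~\ref{Lemma}(iii) since $\underline\beta(\xi)\le\overline\beta(\xi)$ always holds (the bounds might be $\pm\infty$, but in that case $\Sigma_{\xi}$ is still a nonempty subset of $\oR$ and the inclusion $\Sigma_{\xi}\subset\{\lambda\}$ would force $\lambda\in\Sigma_{\xi}$ nonetheless). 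The corollary therefore follows as a short consequence of the two results already established.
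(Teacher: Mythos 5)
Your argument is correct and is essentially the paper's intended proof: the text immediately preceding the lemma explains that one needs only show each point of $\Sigma_{\rm SS}$ lies in $\Sigma_{\rm Bohl}$, and that this follows from the lemma providing a solution $\xi$ with $\Sigma_\xi$ contained in (hence, since $\Sigma_\xi$ is nonempty, equal to) the singleton spectral interval. Your added remark about nonemptiness via Proposition~\ref{Lemma}(iii) is a small but legitimate point that the paper leaves implicit.
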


\begin{remark}
  Each component of the Sacker--Sell spectrum contains points of the Bohl spectrum.
  One may ask how many components of the Bohl spectrum can there be in a Sacker--Sell spectral interval.
  For a bounded integrally separated system, the answer is one since the two spectra coincide.
  For bounded systems in two dimensions, that leaves us with the case where the Sacker--Sell spectrum is one interval, and the system
  is not integrally separated. Then if the Bohl spectrum had two components,
  we would have two integrally separated solutions. So there can only be one component.
  However in three dimensions, consider the system
  \[ \dot x=a(t)x,\quad \dot y=A(t)y,\]
  where the first is a scalar system with Bohl spectrum equal to the Sacker--Sell spectrum, given by $[-\frac{1}{2},\frac{1}{2}]$
  and the second is the two-dimensional system, we constructed in Subsection~\ref{subsec1} with Sacker--Sell spectrum $=[-1,0]$ and
  Bohl spectrum $\{-1\}$. Then the three-dimensional system has Sacker--Sell spectrum $[-1,\frac{1}{2}]$, but the
  Bohl spectrum is given by $[-\frac{1}{2},\frac{1}{2}] \cup \set{-1}$, where we have used Lemma~\ref{Lemma4}.
\end{remark}

\section{Nonlinear perturbations}\label{sec6}

This section is devoted to study whether the trivial solution of a nonlinearly perturbed system with negative Bohl spectrum is asymptotically stable. Note that if the Sacker--Sell spectrum is negative, then nonlinear stability follows directly, but we will show below by means of a counter example that we cannot obtain such a result for the Bohl spectrum. Before doing so, we look at the example from Subsection~\ref{subsec1} with negative Bohl spectrum, and we prove that the system is exponentially stable for any nonlinear perturbation. Since the Sacker--Sell spectrum of this linear system is not negative, this shows that even in those cases, stability for the nonlinear system can follow. Despite the fact that negative Bohl spectrum does not imply nonlinear stability, in a forthcoming paper, we will discuss additional conditions on the nonlinearity that guarantee nonlinear stability for systems with negative Bohl spectrum, which include cases where the Sacker--Sell spectrum cannot indicate stability.

\begin{proposition}\label{prop2}
  Consider the nonlinear differential equation
  \begin{equation}\label{eqn1}
    \dot x = A(t)x+ f(t,x)\,,
  \end{equation}
  where $A:\R^+_0\to\R^{d\times d}$ is given as in \eqref{Example_02}, and $f:\R^+_0\times \R^d\to\R^d$ is continuous with
  \begin{displaymath}
    \|f(t,x)\|\le L\| x\|^q \fa t\in\R^+_0\mand x\in B_\delta(0)
  \end{displaymath}
  for some $\delta>0$, $L\ge 1$ and $q>1$. Then the trivial solution of \eqref{eqn1} is exponentially stable, i.e.~there exist $\alpha > 0$ and $\tilde\delta > 0$ such that
  \begin{displaymath}
    \|\varphi(t,0,x)\| \le K e^{-\alpha t}\|x\|  \fa t\in\R^+_0 \mand x\in B_{\tilde\delta}(0)\,,
  \end{displaymath}
  where $\varphi$ denotes the general solution of \eqref{eqn1}.
\end{proposition}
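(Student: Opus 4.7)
The plan is to apply the change of variables $z(t) := X^{-1}(t)\varphi(t)$, which transforms~\eqref{eqn1} into the integral equation
\[
z(t) = x + \int_0^t X^{-1}(s)\,f\bigl(s,X(s)z(s)\bigr)\,\rmd s,
\]
and then to bound $\|z(t)\|$ by a Gronwall-type bootstrap argument. Everything reduces to two sharp estimates on the fundamental matrix: a uniform exponential decay $\|X(t)\|\le K_1 e^{-\alpha t}$ for some $\alpha\in(0,1)$, and a matching growth bound $\|X^{-1}(t)\|\le K_2 e^{(2-\alpha)t}$.

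Establishing $\|X(t)\|\le K_1 e^{-\alpha t}$ is the main obstacle, since $\Sigma_{\rm SS}=[-1,0]$ rules out the usual exponential dichotomy with negative growth rate. I exploit the upper triangular structure of $A(t)$: the matrix $X(t)$ is upper triangular with $X_{11}(t)=e^{-t}$ and $X_{22}(t)=e^{-\sigma(t)}$, where $\sigma(t)$ denotes the measure of $[0,t]\cap\bigcup_k[T_{2k},T_{2k+1}]$, i.e.~the time spent in $A_1$-intervals. Because $\tau_{2k}$ grows dramatically faster than $\tau_{2k+1}$, a direct computation of $\alpha t-\sigma(t)$ on each interval $[T_k,T_{k+1}]$ shows $\sup_{t\ge 0}(\alpha t-\sigma(t))=2\alpha-1<\infty$ for every $\alpha<1$, with the supremum attained at $t=T_2$. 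The off-diagonal entry $X_{12}(t)$ is governed by the switching recursion
\[
X_{12}(T_{2k+1})=e^{-\tau_{2k}}\bigl(X_{12}(T_{2k})+\delta\tau_{2k}X_{22}(T_{2k})\bigr),\qquad X_{12}(T_{2k+2})=e^{-\tau_{2k+1}}X_{12}(T_{2k+1}),
\]
and a similar elementary estimate yields $|X_{12}(t)|\le Ke^{-\alpha t}$. The growth bound for $X^{-1}$ then follows from Liouville's formula $\det X(t)=e^{-t-\sigma(t)}$ combined with the adjugate representation of the inverse.

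Substituting both bounds into the integral equation for $z$ gives
\[
\|z(t)\|\le\|x\|+LK_1^qK_2\int_0^t e^{(2-(q+1)\alpha)s}\|z(s)\|^q\,\rmd s.
\]
Since $q>1$, the interval $(2/(q+1),1)$ is non-empty; fixing $\alpha$ in it renders the exponent strictly negative, so the kernel $e^{(2-(q+1)\alpha)s}$ is integrable on $\R^+_0$. Setting $M(t):=\sup_{0\le s\le t}\|z(s)\|$ and using this integrability, one obtains $M(t)\le\|x\|+CM(t)^q$ for a constant $C>0$ independent of $t$ and $x$. A standard continuity-based bootstrap then yields $M(t)\le 2\|x\|$ for all $t\ge 0$, provided $\|x\|\le\tilde\delta$ with $\tilde\delta$ sufficiently small (one also takes $\tilde\delta\le\delta/(2K_1)$ so that $\varphi(t)$ remains in the domain of $f$). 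Returning to $\varphi$, one concludes $\|\varphi(t)\|=\|X(t)z(t)\|\le K_1 e^{-\alpha t}\cdot 2\|x\|$, which is the required exponential stability with $K=2K_1$ and rate $\alpha>0$.
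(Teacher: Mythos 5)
Your proof is correct in outline, but it takes a genuinely different route from the paper's. The paper argues at the level of Lyapunov theory: since $\Sigma_{\rm Bohl}=\{-1\}$ forces both Lyapunov exponents to equal $-1$, the Lyapunov inequality for the trace combined with $a_{22}\ge -1$ pins down $\lim_{t\to\infty}\frac{1}{t}\int_0^t a_{22}$, which gives Lyapunov regularity via Perron's criterion for triangular systems; the conclusion is then the classical Lyapunov--Malkin theorem for regular systems with negative exponents. You instead re-derive the needed exponential bounds from scratch using the explicit triangular structure of $X(t)$ and feed them into a variation-of-constants/bootstrap argument --- essentially an inline, self-contained proof of the Lyapunov--Malkin conclusion. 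Your approach is more elementary and avoids citing regularity theory, at the cost of some heavier computation and a non-optimal bound: regularity would give $\|X^{-1}(t)\|\le K e^{(1+\eps)t}$, whereas your Liouville/adjugate argument yields $K e^{(2-\alpha)t}$, which is why you need the side constraint $\alpha\in(2/(q+1),1)$; fortunately $q>1$ makes this interval non-empty and every $\alpha<1$ is achievable, so this is not a gap, just a crude constant.

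Two small points worth flagging. First, the explicit claim that $\sup_{t\ge 0}(\alpha t-\sigma(t))=2\alpha-1$ ``attained at $t=T_2$'' is false: on the paper's explicit sequence one computes $\alpha T_{4}-\sigma(T_4)=(\alpha-1)e^4+5\alpha-1$, which exceeds $2\alpha-1$ for $\alpha$ close to $1$, and the supremum migrates to later $T_{2k}$ as $\alpha\to 1^-$. What matters --- and what \emph{is} true --- is only that the supremum is finite for each $\alpha<1$, which follows because $\eqref{Example_01}$ forces $(\alpha-1)(T_{2\ell+1}-T_{2\ell})+\alpha(T_{2\ell+2}-T_{2\ell+1})\to-\infty$, so the sums stay bounded above. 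Second, the estimate $|X_{12}(t)|\le K e^{-\alpha t}$ is dismissed as ``a similar elementary estimate,'' but it is actually the technical heart of your argument: one needs to bound $e^{(\alpha-1)T_{2k+1}}\sum_{\ell=0}^k (T_{2\ell+1}-T_{2\ell})e^{\rho(T_{2\ell})}$ uniformly in $k$, where $\rho(T_{2\ell})$ is the cumulative time spent in $A_2$-intervals, and the geometric summability of the $\ell<k$ terms hinges on $T_{2\ell+1}-T_{2\ell}\to\infty$. The bound does hold under $\eqref{Example_01}$, but it deserves to be written out, since it is not obviously contained in the $X_{22}$ estimate.
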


\begin{proof}
  Since the Bohl spectrum is given by $\set{-1}$, both Lyapunov exponents must be $-1$. However, the sum of the Lyapunov exponents is bounded below by
  \[ \limsup_{t\to\infty}\frac{1}{t}\int^t_0{\rm Tr}\,A(u)\,\rmd u\,,\]
  see \cite[Theorem~2.5.1]{Adrianova_95_1} and \cite[p.~226]{Barreira_08_1}. For our system, this means
  \[ -2 \ge \limsup_{t\to\infty}\frac{1}{t}\int^t_0 (-1+a_{22}(u))\,\rmd u\,,\]
  and hence that
  \[ -1\ge \limsup_{t\to\infty}\frac{1}{t}\int^t_0 a_{22}(u)\,\rmd u\,.\]
  On the other hand, since $a_{22}(t)\ge -1$ for all $t\ge0$, it follows that
  \[ -1 \le \liminf_{t\to\infty}\frac{1}{t}\int^t_0 a_{22}(u)\,\rmd u\,.\]
  We conclude that
  \[ \lim_{t\to\infty}\frac{1}{t}\int^t_0 a_{22}(u)\,\rmd u=-1\]
  and so we get regularity from \cite[Theorem~64.2]{Hahn_67_1} or \cite[Theorem~3.8.1]{Adrianova_95_1}. Then our system
  is regular and has negative Lyapunov exponents, so for any higher-order perturbation, the zero solution is exponentially
  stable (see \cite{Lyapunov_66_1}, \cite[Theorem~65.3]{Hahn_67_1} or \cite{Barreira_13_1}).
\end{proof}

\begin{remark}
  Consider the linear system \eqref{Eq2} used in the above proposition, and let $a<b$. Then the system
  \[
    \dot x=\big((b-a)A((b-a)t)+b\big)x
  \]
  has Sacker--Sell spectrum $[a,b]$ and Bohl spectrum $\{a\}$ since either spectrum
  of $\dot x=\gamma A(\gamma t)x$ is $\gamma$ times the spectrum of $\dot x=A(t)x$,
  and the spectrum of $\dot x=(A(t)+b)x$ is the translation of the spectrum of $\dot x=A(t)x$ by the number $b$. Taking $[a,b] = [-1+\eps,\eps]$, where $0<\eps<1$, implies that we get a system with $\Sigma_{\rm Bohl}=\set{-1+\eps}$ and $\Sigma_{\rm SS} = [-1+\eps,\eps]$, and we obtain asymptotic stability for nonlinear perturbations similarly to Proposition~\ref{prop2}, although the Sacker--Sell spectrum has nontrivial intersection with the position half line.
\end{remark}

We now study an example for which that Bohl spectrum is negative, and there exists a nonlinear perturbation such that the trivial solution of the nonlinear system is not asymptotically stable.
Let $\alpha,\beta,\gamma$ and $\delta$ be positive real numbers with
\begin{equation}\label{AddRemark_01}
  \beta>2\gamma+3\alpha\,,\, \gamma>2\alpha \qandq\delta\geq 1\,.
\end{equation}
Define a piecewise constant matrix-valued function $A:\R_0^+\rightarrow \R^{2\times 2}$ by
\begin{equation}\label{Example_02a}
  A(t):=\left\{
  \begin{array}{ccl}
    A_1 &  : & t\in [0,1) \hbox{ or } 2^{2k+1}\le t < 2^{2k+2}\,, \\
    A_2 & : &  2^{2k}\le t < 2^{2k+1}\,,
  \end{array}\right.
\end{equation}
where $k\in\N_0$ and
\[
  A_1:=\left(\begin{matrix} -\alpha& \delta\\ 0&-\beta\end{matrix}\right)\qandq
  A_2:=\left(\begin{matrix} -\alpha& \delta\\ 0&\gamma\end{matrix}\right)\,.
\]
We now compute the Bohl spectrum of the system
\begin{equation}\label{AddRemark_Eq02}
  \dot x=A(t)x\,,
\end{equation}
where $A:\R_0^+\to\R^{2\times 2}$ is defined as in \eqref{Example_02a}. We need the following preparatory result.

\begin{lemma}\label{Add_Lemma}
  Let $(x(t),y(t))$ be a solution of \eqref{AddRemark_Eq02}. Then
  \[
  |y(t)|
  \leq
  e^{-\frac{\beta-2\gamma}{3}t}|y(0)|
  \fa t\geq 0\,.
  \]
\end{lemma}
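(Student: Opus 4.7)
The plan is to exploit the upper-triangular structure of $A_1$ and $A_2$: the second component of the system decouples into the scalar equation $\dot y = a_{22}(t)\,y$, where $a_{22}(t) = -\beta$ on the $A_1$-intervals and $a_{22}(t) = \gamma$ on the $A_2$-intervals. Consequently $|y(t)| = |y(0)|\exp\bigl(\int_0^t a_{22}(u)\,du\bigr)$, and the inequality to be proved reduces to the integral bound
\begin{displaymath}
\int_0^t a_{22}(u)\,du \;\le\; -\tfrac{\beta - 2\gamma}{3}\,t \fa t\ge 0\,.
\end{displaymath}
To establish this, I would introduce the piecewise linear auxiliary function $\Phi(t) := \int_0^t a_{22}(u)\,du + \tfrac{\beta - 2\gamma}{3}\,t$, so that the claim becomes $\Phi(t)\le 0$ with $\Phi(0)=0$.

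A direct computation of the derivative yields
\begin{displaymath}
  \dot\Phi = -\tfrac{2(\beta + \gamma)}{3} < 0 \quad\text{on $A_1$-intervals}\qandq \dot\Phi = \tfrac{\beta + \gamma}{3} > 0 \quad\text{on $A_2$-intervals}\,.
\end{displaymath}
Hence $\Phi$ strictly decreases on $[0,1]$ and on each $[2^{2k+1},2^{2k+2}]$, and strictly increases on each $[2^{2k},2^{2k+1}]$; consequently it is enough to verify $\Phi\le 0$ at the dyadic breakpoints $t = 2^j$. Starting from $\Phi(0)=0$ and $\Phi(1) = -\tfrac{2(\beta+\gamma)}{3}$, the one-step recursions
\begin{displaymath}
  \Phi(2^{2k+1}) = \Phi(2^{2k}) + \tfrac{\beta+\gamma}{3}\cdot 2^{2k}\,, \qquad \Phi(2^{2k+2}) = \Phi(2^{2k+1}) - \tfrac{2(\beta+\gamma)}{3}\cdot 2^{2k+1}
\end{displaymath}
should yield by a short induction that $\Phi(2^{2k+1}) = -\tfrac{\beta+\gamma}{3}$ and $\Phi(2^{2k}) = -\tfrac{(\beta+\gamma)(4^k+1)}{3}$ for every $k\ge 0$. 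Since the local maxima of $\Phi$ all coincide with the plateau value $-\tfrac{\beta+\gamma}{3}<0$ and $\Phi(t)<0$ in between, the inequality $\Phi\le 0$ follows, which is exactly the lemma.

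The main obstacle is the bookkeeping at the breakpoints, since the $A_2$-intervals double in length at each stage; what makes the argument close is the structural fact that each $A_1$-interval $[2^{2k+1},2^{2k+2}]$ is exactly twice as long as the preceding $A_2$-interval $[2^{2k},2^{2k+1}]$, so that the downward excursion of $\Phi$ on the former cancels the upward excursion on the latter and returns $\Phi$ to the same plateau $-\tfrac{\beta+\gamma}{3}$. Notice that the full strength of \eqref{AddRemark_01} is not needed here: the constant $\beta+\gamma$ is positive for any $\beta,\gamma>0$, so the bound holds unconditionally, and the role of $\beta>2\gamma$ is only to make the exponential rate $-\tfrac{\beta-2\gamma}{3}$ negative and hence the estimate non-trivial.
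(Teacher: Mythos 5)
Your proof is correct and takes essentially the same route as the paper: both reduce to the scalar equation $\dot y = a_{22}(t)\,y$, compute explicitly at the dyadic breakpoints, and interpolate (the paper writes out $y(2^{2k})$ and $y(2^{2k+1})$ directly, while you package the same bookkeeping into the auxiliary function $\Phi$, which is a tidy reformulation). One minor slip in your intuitive remark: the downward excursion of $\Phi$ on $[2^{2k+1},2^{2k+2}]$ is \emph{four} times the upward excursion on the preceding $[2^{2k},2^{2k+1}]$ (the rate is doubled and the interval length is doubled), so these do not cancel; it is rather the rise on the \emph{following} $A_2$-interval $[2^{2k+2},2^{2k+3}]$ that exactly undoes the $A_1$-drop and restores the plateau $-\tfrac{\beta+\gamma}{3}$. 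Your verified recursion and closed forms $\Phi(2^{2k+1})=-\tfrac{\beta+\gamma}{3}$, $\Phi(2^{2k})=-\tfrac{(\beta+\gamma)(4^{k}+1)}{3}$ are correct, so the argument stands.
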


\begin{proof}
For any $k\in\N_0$, we have
\[
y(2^{2k+2})= e^{-\beta  2^{2k+1}} y(2^{2k+1})
\qandq
y(2^{2k+1})= e^{\gamma 2^{2k} } y(2^{2k})\,,
\]
which implies that
\[
y(2^{2k+2})
=
e^{-(2\beta-\gamma)2^{2k}}y(2^{2k}).
\]
Hence,
\begin{align*}
y(2^{2k})
&=
e^{-(2\beta-\gamma)(2^{2k-2}+\dots+ 2^0)}y(1)\\
&=
e^{-\frac{2\beta-\gamma}{3}2^{2k}}e^{-\frac{\beta+\gamma}{3}} y(0)\,,
\end{align*}
and
\[
y(2^{2k+1})
=
e^{2^{2k}\gamma}y(2^{2k})
=
e^{-\frac{\beta-2\gamma}{3}2^{2k+1}}e^{-\frac{\beta+\gamma}{3}} y(0).
\]
Consequently, if $2^{2k}\leq t < 2^{2k+1}$, then
\begin{align*}
|y(t)|
&=
e^{\gamma(t-2^{2k})} |y(2^{2k})|\\
&=
e^{\gamma(t-2^{2k})}  e^{-\frac{2\beta-\gamma}{3}2^{2k}}e^{-\frac{\beta+\gamma}{3}} |y(0)|\\
&\leq
e^{-\frac{\beta-2\gamma}{3}t}|y(0)|,
\end{align*}
and if $2^{2k+1}\leq t < 2^{2k+2}$, then
\begin{align*}
|y(t)|
&=
e^{-\beta (t-2^{2k+1})} |y(2^{2k+1})|\\
&=
e^{-\beta (t-2^{2k+1})}  e^{-\frac{\beta-2\gamma}{3}2^{2k+1}}e^{-\frac{\beta+\gamma}{3}} |y(0)|.\\
&\leq
e^{-\frac{\beta-2\gamma}{3}t}|y(0)|,
\end{align*}
which completes the proof of this lemma.
\end{proof}

\begin{proposition}\label{Add_Proposition}
  The Bohl spectrum of \eqref{AddRemark_Eq02} satisfies $\Sigma_{\rm Bohl}\leq -\alpha<0$.
\end{proposition}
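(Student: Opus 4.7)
The plan is to prove $\overline{\beta}(\xi)\le -\alpha$ for every nonzero $\xi=(x_0,y_0)\in\R^2$; by Proposition~\ref{Lemma}(iii) this gives $\Sigma_\xi\subseteq[-\infty,-\alpha]$, whence $\Sigma_{\rm Bohl}\le-\alpha$. The case $y_0=0$ is immediate, since then $y(t)\equiv 0$ and $x(t)=x_0 e^{-\alpha t}$, so $\Sigma_\xi=\{-\alpha\}$. For $y_0\ne 0$ I would introduce the change of variables $z(t):=e^{\alpha t}x(t)$, which satisfies $\dot z=\delta e^{\alpha t}y(t)$. By Lemma~\ref{Add_Lemma}, $|y(u)|\le|y_0|e^{-\alpha' u}$ with $\alpha':=(\beta-2\gamma)/3>\alpha$ (using the hypothesis $\beta>2\gamma+3\alpha$), so the integral $I_\infty:=\int_0^\infty e^{\alpha u}y(u)\,\rmd u$ converges absolutely, and $z(t)\to z_\infty:=x_0+\delta I_\infty$ as $t\to\infty$.

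I would then split into two sub-cases. If $z_\infty\ne 0$, then for $t$ large $|z_\infty|/2\le|z(t)|\le 2|z_\infty|$, so $|x(t)|=e^{-\alpha t}|z(t)|$ is comparable to $e^{-\alpha t}$; combined with the strictly faster decay $|y(t)|\le|y_0|e^{-\alpha' t}$ this gives $\|X(t)\xi\|\sim|z_\infty|e^{-\alpha t}$, and hence $\Sigma_\xi=\{-\alpha\}$ directly. If $z_\infty=0$, then $z(t)=-\delta\int_t^\infty e^{\alpha u}y(u)\,\rmd u$; since $y(u)=y_0 e^{\int_0^u a_y(v)\,\rmd v}$ has constant sign (where $a_y$ denotes the $22$-entry of $A(t)$), the integrand is sign-definite, so $|z(t)|$ is monotonically decreasing in $t$. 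This immediately yields $|x(t)|/|x(s)|=e^{-\alpha(t-s)}|z(t)|/|z(s)|\le e^{-\alpha(t-s)}$ for all $t\ge s\ge 0$.

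The main obstacle is that in this degenerate sub-case $|y(t)|$ still fluctuates wildly (it grows at rate $\gamma>0$ on up-intervals), so to transfer the bound on $|x|$ to the full state one has to dominate $|y|$ by $|x|$ uniformly in $t$. The key observation is that on any interval $[t,t+1]$ the function $|y|$ decays at most at rate $\beta$, giving $|y(u)|\ge|y(t)|e^{-\beta}$, and therefore
\[
  |x(t)|=\delta e^{-\alpha t}\int_t^\infty e^{\alpha u}|y(u)|\,\rmd u \ge \delta e^{-\alpha t}|y(t)|e^{-\beta}\int_t^{t+1}e^{\alpha u}\,\rmd u = c_\ast|y(t)|
\]
with $c_\ast:=\delta e^{-\beta}(e^\alpha-1)/\alpha>0$, so $|y(t)|\le C|x(t)|$ uniformly in $t$, where $C=1/c_\ast$. (In the generic sub-case the same bound holds for large $t$ because $|y(t)|/|x(t)|\to 0$.) Working in the maximum norm, $\|X(s)\xi\|\ge|x(s)|$ and $\|X(t)\xi\|\le(1+C)|x(t)|$, so combined with $|x(t)|/|x(s)|\le Ke^{-\alpha(t-s)}$ (with $K=1$ in the degenerate sub-case and $K=4$ in the generic one) we obtain
\[
  \frac{\|X(t)\xi\|}{\|X(s)\xi\|}\le (1+C)K\,e^{-\alpha(t-s)}\quad\text{for }t\ge s\ge T,
\]
whence $\overline\beta(\xi)\le -\alpha$; by Proposition~\ref{Lemma}(i) this suffices to compute $\Sigma_\xi$, completing the proof.
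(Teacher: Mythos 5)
Your proof is correct, and it is genuinely cleaner than the paper's in the critical degenerate sub-case. The paper handles Case 2.2 (your $z_\infty=0$) by discretizing time at dyadic points $2^k$, bounding $\|\xi(t)\|/\|\xi(s)\|$ separately on the down-intervals $[2^{2k+1},2^{2k+2})$, the up-intervals away from their right endpoint $[2^{2k},2^{2k+1}-1)$, and crude unit-length overlaps $[2^{2k+1}-1,2^{2k+1}]$, then multiplying these estimates over a chain of roughly $\log_2(t/s)$ dyadic blocks and checking that the accumulated error $e^{(m-n)(M+\alpha)}$ is asymptotically negligible against $e^{-\alpha(t-s)}$. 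Your key lemma --- that a unit backward-in-time step decreases $|y|$ by at most $e^\beta$, hence $|x(t)|\ge \delta e^{-\beta}(e^\alpha-1)\alpha^{-1}|y(t)|$ uniformly in $t$ whenever $z_\infty=0$ --- eliminates the dyadic bookkeeping entirely: once $|y(t)|\le C|x(t)|$ uniformly, the monotonicity of $|z(t)|=e^{\alpha t}|x(t)|$ gives $\|X(t)\xi\|/\|X(s)\xi\|\le\max(1,C)e^{-\alpha(t-s)}$ for all $t\ge s\ge 0$ in one line. The paper implicitly establishes a comparison between $|x|$ and $|y|$ only on parts of each interval and must patch the unit-length gaps with the crude bound $e^{M(t-s)}$; your approach buys the global comparison directly from the integral representation, which also makes it transparent that the constant depends only on $\alpha,\beta,\delta$. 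The generic sub-case $z_\infty\ne 0$ matches the paper's Case 2.1, and the reference to Proposition~\ref{Lemma}(i) to pass from a large-time bound to the Bohl exponent is appropriate. One cosmetic point: with the maximum norm you actually get $\|X(t)\xi\|\le\max(1,C)|x(t)|$ rather than $(1+C)|x(t)|$, but since both are upper bounds this is harmless.
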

\begin{proof}
Fix an initial condition $(x_0,y_0)\in\R^2$, and let $\xi(t)=(x(t),y(t))^{\rT}$ denote the solution of \eqref{AddRemark_Eq02} with $\xi(0)=(x_0,y_0)^{\rT}$. Obviously, $\Sigma_{\xi}=\Sigma_{-\xi}$ and we thus may assume that $y_0\geq 0$. Let $\R^2$ be endowed with the maximum norm for the remainder of this proof. We consider the following two cases.

\noindent \emph{Case 1.} $y_0=0$. Then we have $\xi(t)=(e^{-\alpha t},0)^{\rT}$, which implies $\Sigma_{\xi}=\{-\alpha\}$.

\noindent \emph{Case 2.} $y_0\not=0$. By the variation of constants formula, we have
\[
x(t)=e^{-\alpha t}\left(x_0+\delta \int_0^t e^{\alpha s} y(s)\,\rmd s\right)\,,
\]
and by \eqref{AddRemark_01} and  Lemma~\ref{Add_Lemma}, the integral $\int_0^{\infty}e^{\alpha s} y(s)\,\rmd s$ exists. We divide the remainder of Case~2 into two different cases.

\noindent \emph{Case 2.1.} $x_0\not=-\delta \int_0^{\infty}e^{\alpha s} y(s)\,\rmd s$. Then
\[
  \lim_{t\to\infty} e^{\alpha t} |x(t)|=
  \left|
  x_0+\delta\int_0^{\infty}
  e^{\alpha s} y(s)\,\rmd s\right|\,.
\]
Hence, there exists a $T>0$ such that for all $t\geq T$, we have $|x(t)|>|y(t)|$. This implies
\[
\lim_{t-s\to\infty}\frac{1}{t-s}\ln\frac{\|\xi(t)\|}{\|\xi(s)\|}=-\alpha\,,
\]
which leads to $\Sigma_{\xi}=\{-\alpha\}$.

\noindent \emph{Case 2.2.} $x_0=-\delta \int_0^{\infty}e^{\alpha s} y(s)\,\rmd s$. Then
\begin{equation}\label{New_Eq3}
  x(t)=-\delta e^{-\alpha t}\int_{t}^{\infty}e^{\alpha s} y(s)\,\rmd s<0\,.
\end{equation}
Since $y(t)>0$ for all $t\geq 0$, it follows that for $t\geq s$
\begin{equation}\label{New_Eq6}
e^{\alpha t} |x(t)| =\delta\int_t^{\infty} e^{\alpha u} y(u)\,\rmd u\leq \delta \int_s^{\infty} e^{\alpha u} y(u)\,\rmd u= e^{\alpha s} |x(s)|\,.
\end{equation}
Having done this for the $x$-component of $\xi$, we now compare $e^{\alpha t}\|\xi(t)\|$ and $e^{\alpha s}\|\xi(s)\|$ with $t\ge s$. The following statements hold.
\begin{itemize}
\item[(i)] For all $t,s\in [2^{2k+1}, 2^{2k+2})$ with $t\geq s$, we have $e^{\alpha t}|y(t)|\leq e^{\alpha s} |y(s)|$, and with \eqref{New_Eq6}, we get
\[
\|\xi(t)\|\leq e^{-\alpha (t-s)}\|\xi(s)\|\,.
\]
\item[(ii)] For all $t,s\in [2^{2k}, 2^{2k+1}-1)$ with $t\geq s$ and $k\in\N$, note that in the interval $[2^{2k}, 2^{2k+1}]$ the function $y(t)$ is increasing, so we have
\[
e^{\alpha t}|x(t)|=\delta\int_t^{\infty} e^{\alpha s} y(s)\,\rmd s
\geq
\int_t^{2^{2k+1}} e^{\alpha s} y(s)\,\rmd s
\geq
e^{\alpha t} y(t)
\]
for all $t,s\in [2^{2k}, 2^{2k+1}-1)$ with $t\geq s$. Hence, $\|\xi(t)\|=|x(t)|$, and from~\eqref{New_Eq6}, we obtain
\[
\|\xi(t)\|\leq e^{-\alpha (t-s)}\|\xi(s)\|\,.
\]
\item[(iii)] For $2^{2k+1}-1\leq s \leq t\leq 2^{2k+1}$, we have
\[
\|\xi(t)\|\leq  e^{M(t-s)}\|\xi(s)\|\,,
\]
where $M=\max\{\alpha+\delta,\gamma\}$ is the operator norm of $A_2$ with respect to the maximum norm. In particular,
\begin{equation}\label{Add_Eq1}
\frac{\|\xi(2^{2k+1})\|}{\|\xi(2^{2k})\|}
=
\frac{\|\xi(2^{2k+1})\|}{\|\xi(2^{2k+1}-1)\|}
\frac{\|\xi(2^{2k+1}-1)\|}{\|\xi(2^{2k})\|}
\leq e^{M+\alpha} e^{-\alpha(2^{2k+1}-2^{2k})}\,.
\end{equation}
\end{itemize}
Let $t> s\geq 4$. We choose $m,n\in\N$ such that
\[
2^{m+1}>t\geq 2^m\geq 2^n\geq s > 2^{n-1}\,.
\]
Note that
\begin{displaymath}
\frac{\|\xi(t)\|}{\|\xi(s)\|}
=
\frac{\|\xi(t)\|}{\|\xi(2^m)\|}
\frac{\|\xi(2^n)\|}{\|\xi(s)\|}
\prod_{k=n}^{m-1}
\frac{\|\xi(2^{k+1})\|}{\|\xi(2^k)\|}\,.
\end{displaymath}
Using \eqref{Add_Eq1}, we obtain
\begin{align*}
\prod_{k=n}^{m-1}
\frac{\|\xi(2^{k+1})\|}{\|\xi(2^k)\|}
&\leq
\prod_{k=n}^{m-1}
e^{M+\alpha} e^{-\alpha(2^{2k+1}-2^{2k})}\\
&=
e^{(m-n)(M+\alpha)} e^{-\alpha (2^m-2^n)}\,.
\end{align*}
Analogously to \eqref{Add_Eq1}, we have
\[
\frac{\|\xi(t)\|}{\|\xi(2^m)\|}
\leq e^{M+\alpha} e^{-\alpha(t-2^m)}
\qandq
\frac{\|\xi(2^n)\|}{\|\xi(s)\|}
\leq e^{M+\alpha} e^{-\alpha(2^n-s)}\,.
\]
Consequently, we obtain the estimate
\[
\frac{\|\xi(t)\|}{\|\xi(s)\|}
\leq
e^{(m-n+2)(M+\alpha)}e^{-\alpha(t-s)}\,.
\]
Thus,
\begin{align*}
\limsup_{t-s\to\infty}\frac{1}{t-s}\ln\frac{\|\xi(t)\|}{\|\xi(s)\|}
&\leq
-\alpha+(M+\alpha)\limsup_{t-s\to\infty}\frac{\log_2(t/s)}{t-s}\\
&\leq
-\alpha+(M+\alpha)\limsup_{t-s\to\infty}\frac{\log_2(1+(t-s)/4)}{t-s}= -\alpha\,,
\end{align*}
which completes the proof.
\end{proof}

The following proposition shows that, although the Bohl spectrum is bounded above by $-\alpha<0$, for certain nonlinear perturbation of \eqref{AddRemark_Eq02}, the system is unstable.

\begin{proposition}\label{Theorem1}
Consider the perturbed system
\begin{equation}\label{New_Eq4}
\begin{pmatrix}
\dot x\\
\dot y
\end{pmatrix}
=
A(t)
\begin{pmatrix}
x\\
y
\end{pmatrix}
+
\begin{pmatrix}
0\\
x^2
\end{pmatrix}
\,,
\end{equation}
where $A(t)$ is defined as in \eqref{Example_02a}. Then the trivial solution of \eqref{New_Eq4} is unstable.
\end{proposition}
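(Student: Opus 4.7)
The strategy is to show, for every $\eps>0$, that the solution of \eqref{New_Eq4} starting from $(x_0,y_0)=(\eps,\eps)$ is unbounded, contradicting stability of the origin. The engine is the positive nonlinearity $x^2$: it turns the $y$-component into a one-sided reservoir that is amplified by $e^{\gamma L_k}$ on each $A_2$-interval of length $L_k:=2^{2k}$, fed back into $x$ through the off-diagonal $\delta y$, and then, squared and integrated across the next $A_1$-interval, produces a super-exponential feedback overwhelming the linear contraction governed by the Bohl spectrum.

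The first step is positivity. Writing $c(t)$ for the $(2,2)$-entry of $A(t)$ and $C(t):=\int_0^t c(u)\,\rmd u$, the identity $\frac{d}{dt}\bigl(e^{-C(t)}y(t)\bigr)=e^{-C(t)}x(t)^2\ge 0$ yields $y(t)\ge e^{C(t)}y_0>0$, and then $\dot x\ge-\alpha x$ gives $x(t)\ge e^{-\alpha t}x_0>0$ for all $t\ge 0$. The second step is a one-period estimate. Set $T_m:=2^m$ and $u_k:=y(T_{2k})$. On the $A_2$-interval $[T_{2k},T_{2k+1}]$, inserting the lower bound $y(s)\ge e^{\gamma(s-T_{2k})}u_k$ into the variation-of-constants formula for $\dot x=-\alpha x+\delta y$ yields, once $k$ is large enough that $e^{\gamma L_k}\ge 2e^{-\alpha L_k}$,
\[
  x(T_{2k+1}) \ge \frac{\delta\,u_k}{2(\alpha+\gamma)}\,e^{\gamma L_k}.
\]
On the subsequent $A_1$-interval $[T_{2k+1},T_{2k+2}]$ of length $2L_k$, the pointwise bound $x(s)\ge e^{-\alpha(s-T_{2k+1})}x(T_{2k+1})$ combined with
\[
  y(T_{2k+2}) \ge \int_{T_{2k+1}}^{T_{2k+2}}e^{-\beta(T_{2k+2}-s)}\,x(s)^2\,\rmd s
\]
and a direct integration---valid because $\beta-2\alpha>0$, a consequence of $\beta>2\gamma+3\alpha$ and $\gamma>2\alpha$---produces $u_{k+1}\gtrsim x(T_{2k+1})^2\,e^{-4\alpha L_k}/(\beta-2\alpha)$. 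Combining the two estimates gives the key recursion
\[
  u_{k+1} \ge C\,u_k^{\,2}\,e^{2(\gamma-2\alpha)L_k} \qquad \text{for all }k\ge k_0,
\]
where $C>0$ depends only on $\alpha,\beta,\gamma,\delta$.

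The third step iterates the recursion. Setting $v_k:=\log u_k$, the recursion becomes $v_{k+1}\ge 2v_k+2(\gamma-2\alpha)L_k+\log C$, which unfolds to
\[
  v_{k+1} \ge 2^{\,k+1-k_0}\,v_{k_0} + 2(\gamma-2\alpha)\bigl(2^{2k+1}-2^{k+k_0}\bigr) + O\bigl(2^{\,k-k_0}\bigr).
\]
Because $\gamma-2\alpha>0$, the $4^k$-forcing term dominates the $2^k$-amplification of the (possibly very negative) initial value $v_{k_0}$, so $v_k\to+\infty$ and hence $u_k\to\infty$. The trajectory is therefore unbounded, and since $\eps>0$ was arbitrary, the origin is unstable.

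The main obstacle is bookkeeping for large $k$: the factor $\tfrac12$ in the $A_2$-estimate requires $e^{\gamma L_k}\ge 2e^{-\alpha L_k}$, and replacing $\int_{T_{2k+1}}^{T_{2k+2}}e^{(\beta-2\alpha)s}\,\rmd s$ by its upper-limit contribution requires $(\beta-2\alpha)L_k\gg 1$; both are ensured by choosing the threshold $k_0$ large. Once the recursion is in force, the $4^k$ super-exponential growth is unconditional, which is what permits instability despite the Bohl spectrum lying strictly to the left of the origin.
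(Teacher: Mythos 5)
Your proof is correct, but it follows a considerably longer route than the paper. You build a quadratic recursion for $u_k := y(2^{2k})$: on each $A_2$-block the amplified $y$-reservoir is fed through the off-diagonal $\delta y$ into $x$, that gain is squared and pushed back into $y$ across the next $A_1$-block, and iterating the recursion yields super-exponential divergence. The paper skips the feedback loop entirely. Using only the trivial lower bound $x(s)\ge e^{-\alpha s}x_0$ (your Step~1), it evaluates $y$ at the right endpoint of the $A_2$-interval rather than at the end of the full period, and keeps only the contribution of that single interval in the variation-of-constants integral:
\[
 y(2^{2k+1}) \;\ge\; x_0^2\int_{2^{2k}}^{2^{2k+1}}
   e^{\gamma(2^{2k+1}-s)}\,e^{-2\alpha s}\,\rmd s
 \;\ge\; \frac{e^{(\gamma-2\alpha)\,2^{2k}}-1}{\gamma+2\alpha}\,x_0^2
 \;\longrightarrow\;\infty ,
\]
which needs nothing beyond $\gamma>2\alpha$. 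Compared with your argument, this single integral requires no threshold $k_0$, no recursion, and in fact does not use $\beta$ at all (the hypothesis $\beta>2\gamma+3\alpha$ is consumed in Lemma~\ref{Add_Lemma} and Proposition~\ref{Add_Proposition}, not here). What your recursion buys is a much sharper growth rate, $\log u_k\gtrsim 4^k$, whereas the paper only concludes $\limsup_{t\to\infty}y(t)=\infty$ along the subsequence $t=2^{2k+1}$. Both arguments establish instability; the paper's is the more economical, and its moral is worth noting: what defeats the negative Bohl spectrum is the amplification of the perturbation within a single $A_2$-excursion, and one does not need to close the feedback loop to see the escape.
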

\begin{proof}
Let $(x_0,y_0)$ be an initial condition at time $t=0$ for the solution $(x(t),y(t))$ with $x_0>0$ and $y_0>0$. We prove $\limsup_{t\to\infty}y(t)=\infty$ with the following two steps.

\noindent \emph{Step 1}. We show that both $x(t)$ and $y(t)$ are positive for all $t\geq 0$. If we suppose the contrary, then we can define
\[
T^*:=
\inf
\big\{
t \ge 0: x(t)<0 \hbox{ or } y(t)< 0
\big\}
\]
By continuity, we derive that $T^*>0$ and $x(t), y(t)>0$ for all $t\in [0,T^*)$. We now consider two cases.

\noindent \emph{Case 1}. If $x(T^*)=0$, then by variation of constants formula we have
\[
x(T^*)=
e^{-\alpha T^*}x_0+
\delta\int_{0}^{T^*} e^{-\alpha (T^*-s)} y(s)\,\rmd s>0\,,
\]
which leads to a contradiction.

\noindent \emph{Case 2}. If $y(T^*)=0$, then $\dot y(T^*)= x(T^*)^2>0$. Thus, there exists $\eps>0$ such that $y(T^*-\eps)<0$. This contradicts the definition of $T^*$.

\noindent \emph{Step 2}. We estimate $y(t)$. From the variation of constants formula for the first component, we have
\[
x(t)\geq e^{-\alpha t} x_0\qquad\hbox{for all } t \ge 0\,.
\]
By variation of constants formula of the second component, we have
\[
y(t)=\Lambda_2(t,0) y_0+\int_{0}^t \Lambda_2(t,s)x(s)^2\,\rmd s\,,
\]
where $\Lambda_2(t,s)$ denote the transition operator of the linear system
\[
\dot y=\left\{
  \begin{array}{ccl}
    -\beta y(t) &  : & t\in [0,1) \hbox{ or } 2^{2k+1}\le t\le 2^{2k+2}\,, \\
    \gamma y(t) & : &  2^{2k}\le t\le 2^{2k+1}\,.
  \end{array}\right.
\]
Hence,
\begin{align*}
y(2^{2k+1})
& \geq
x_0^2\int_{2^{2k}}^{2^{2k+1}}
e^{\gamma (2^{2k+1}-s)} e^{-2\alpha s}\,\rmd s\\
&\geq
e^{\gamma 2^{2k+1}}\frac{e^{-(2\alpha+\gamma) 2^{2k}}- e^{-(2\alpha+\gamma) 2^{2k+1}}}{2\alpha+\gamma}x_0^2\\
&\geq
\frac{e^{(\gamma-2\alpha)2^{2k}}-1}{2\alpha+\gamma}x_0^2\,,
\end{align*}
which proves that $\limsup_{t\to\infty}y(t)=\infty$ and finishes the proof of this proposition.
\end{proof}

Note that in a forthcoming paper, we will discuss additional conditions on the nonlinearity that guarantee nonlinear stability for systems with negative Bohl spectrum, which include cases where the Sacker--Sell spectrum cannot indicate stability.



%
%


\bigskip

\noindent \textbf{Acknowledgements.} The authors are grateful to an anonymous referee for useful comments that led to an improvement of this paper.

\newcommand{\etalchar}[1]{$^{#1}$}
\providecommand{\bysame}{\leavevmode\hbox to3em{\hrulefill}\thinspace}
\providecommand{\MR}{\relax\ifhmode\unskip\space\fi MR }
\providecommand{\MRhref}[2]{%
  \href{http://www.ams.org/mathscinet-getitem?mr=#1}{#2}
}
\providecommand{\href}[2]{#2}

\end{document}